\documentclass[10pt,a4paper]{article}
\usepackage[T1]{fontenc}
\usepackage{graphicx}
\usepackage{subcaption}
\usepackage{float} 
\usepackage{mathtools}
\usepackage{amssymb}
\usepackage{amsthm}
\usepackage{amsmath}
\usepackage{thmtools}
\usepackage{xcolor}
\usepackage{nameref}
\usepackage{babel}
\usepackage{cleveref}
\usepackage{natbib}
\bibliographystyle{apalike2}
\usepackage{pdfpages}
\sloppy
\usepackage[left=2.5cm, right=2.5cm, top=2.5cm, bottom=2.5cm]{geometry}

\newtheorem{theorem}{Theorem}
\numberwithin{theorem}{section}
\newtheorem{lemma}[theorem]{Lemma}
\newtheorem{corollary}[theorem]{Corollary}

\newtheorem{example}[theorem]{Example}
\theoremstyle{remark}

\newtheorem{assumption}{Assumption}

\newcommand{\R}{\mathbb{R}}
\newcommand{\E}{\mathbb{E}}
\newcommand{\V}{\mathsf{V}}
\renewcommand{\P}{\mathbb{P}}
\DeclareMathOperator*{\Lip}{Lip} 
\allowdisplaybreaks
\title{A Wasserstein perspective of Vanilla GANs}
\author{Lea Kunkel and Mathias Trabs}
\date{Karlsruhe Insitute of Technology}
\begin{document}
	\maketitle
\begin{center}
	\begin{minipage}{0.8\textwidth}
		The empirical success of Generative Adversarial Networks
		(GANs) caused an increasing interest in theoretical research. The
		statistical literature is mainly focused on Wasserstein GANs and generalizations
		thereof, which especially allow for good dimension reduction properties.
		Statistical results for Vanilla GANs, the original optimization problem, 
		are still rather limited and require assumptions such as
		smooth activation functions and equal dimensions of the latent space and the ambient space. To bridge this gap, we draw a connection from Vanilla GANs to
		the Wasserstein distance. By doing so,  
		existing results for Wasserstein GANs
		can be extended to Vanilla GANs. In particular, we obtain an oracle inequality
		for Vanilla GANs in Wasserstein distance. The assumptions of this oracle inequality are designed to be satisfied by network architectures commonly used in practice, such as feedforward ReLU networks. 
	By providing a quantitative result for the approximation of a Lipschitz function by a feedforward ReLU network with bounded Hölder norm, we conclude a rate of convergence for Vanilla GANs as well as Wasserstein GANs as estimators of the unknown probability distribution.
	\end{minipage}
\end{center}
\textbf{Keywords:} Generative adversarial networks, rate of convergence, oracle inequality, Wasserstein distance, distribution estimation \\
\textbf{MSC 2020:} 62E17, 62G05, 68T07

\section{Introduction}
\textit{Generative Adversarial Networks} (GANs) have attracted much attention since their introduction by \cite{goodfellow}, initially due to impressive results in the creation of photorealistic images. Meanwhile, the areas of application have expanded far beyond this, and GANs serve as a prototypical example of the rapidly evolving research area of generative models. 

The \textit{Vanilla GAN} as constructed by \cite{goodfellow} relies on the minimax game 
\begin{equation}
	\inf_{G\in\mathcal{G}}\sup_{D\in\mathcal{D}} \E\big[\log D(X)+\log\big(1-D(G(Z))\big)\big],\label{eq:GAN}
\end{equation} to learn an unknown distribution $\P^*$ of the random variable $X$. The generator $G$ chosen from a set $\mathcal{G},$ applied to the latent random variable $Z$ aims to mimic the distribution of $X$ as closely as possible. The discriminator $D,$ chosen from a set $\mathcal{D},$ has to distinguish between real and fake samples.   

The optimization problem \eqref{eq:GAN} is motivated by the Jensen-Shannon divergence. Generalizations of the underlying distance have led to various extensions of the original GAN, such as $f$-GANs (\cite{Nowozin2016}). More famously, \emph{Wasserstein GANs} (\cite{Arjovsky}), characterized by
\begin{equation}\label{eq:WassersteinGAN}
	\inf_{G\in\mathcal{G}}\sup_{W \in \Lip(1)} \E\big[W(X)-W(G(Z))\big],
\end{equation}
are obtained by replacing the Jensen-Shannon divergence by the Kantorovich dual of the Wasserstein distance. Here, $\Lip(1)$ denotes the set of all Lipschitz continuous functions with a Lipschitz constant bounded by one. This approach can be generalized  using \textit{Integral Probability Metrics} (IPMs, \cite{Mueller1997}).  For the application to GANs, see for example \cite{Liang2018}.

The analysis of Wasserstein GANs can exploit the existing theory on the Wasserstein distance. The latter has a long record of research, particularly in the context of optimal transport \citep{Villani2008} but also in machine learning, see \cite{torres2021} for an overview. In contrast, Vanilla GANs and the Jensen-Shannon divergence have been studied less extensively, and fundamental questions have not been settled. In particular, all statistical results for Vanilla GANs require the same dimension of the latent space and the target space which is in stark contrast to common practice. Another algorithmic drawback of Vanilla GANs highlighted by \cite{Arjovsky_2} is that an arbitrarily large discriminator class prevents the generator from learning. The Jensen-Shannon divergence between singular measures is by definition maximal. Therefore, we cannot expect proofs of convergence in a dimension reduction setting. In practice, however, Vanilla GANs have worked in various settings. Thus, using neural networks as a discriminator class must be advantageous compared to the set of all measurable functions. This empirical fact is supported by the numerical results by \cite{farnia2018} who impose a Lipschitz constraint on the discriminator class. 

 A wide range of methods exists for measuring distances between probability measures with varying properties. In view of the manifold hypothesis for high-dimensional data, it is crucial that the selected metric can discriminate between different singular measures. This is not the case for the Total Variation distance, the Jensen-Shannon distance, or even stronger metrics where singular distributions always attain the maximal value of the distance. Conversely, it is advantageous to select a strong metric, as this yields immediate bounds in weaker metrics. In view of this tension we will analyze Vanilla GANs with respect to the Wasserstein-$1$ distance which is often used in the statistical as well as the machine learning literature.  A comprehensive overview of the advantages of the Wasserstein-$1$ distance over other measures that metrize weak convergence, we direct the reader to \cite[p. 98 f.]{Villani2008}. For an overview of distances between probability measures, we refer the reader to  \citet[Figure 1]{Gibbs2002}.

\medskip

\paragraph{Our contribution.}
Our work aims to bridge the gap in theoretical analysis between Vanilla GANs and Wasserstein GANs while addressing the theoretical limitations of the former ones. By imposing a Lipschitz condition on the discriminator class in \eqref{eq:GAN}, we recover Wasserstein GAN-like behavior. As a main result, we can derive an oracle inequality for the Wasserstein distance between the true data generating distribution and its Vanilla GAN estimate. In particular, this allows us to transfer key features, such as dimension reduction, known from the statistical analysis of Wasserstein GANs. We show that the statistical error of the modified Vanilla GAN depends only on the dimension of the latent space, independent of the potentially much larger dimension of the feature space $\mathcal X$. Thus Vanilla GANs can avoid the curse of dimensionality. Such properties are well known from practice, but cannot be verified by the classical Jensen-Shannon analysis. On the other hand the derived rate of convergence for the Vanilla GAN is slower than for Wasserstein GANs which is in line with the empirical advantage of Wasserstein GANs.
 
We then consider the most relevant case where the classes $\mathcal{G}$ and $\mathcal{D}$ are parameterized by neural networks. Using our previous results, we derive an oracle inequality that depends on the network approximation errors for the best possible generator and the optimal Lipschitz discriminator. To bound the approximation error of the discriminator, we replace the Lipschitz constraint on the networks with a less restrictive Hölder constraint. Building on \cite{Guehring2020}, we prove a novel quantitative approximation theorem for Lipschitz functions using ReLU neural networks with bounded Hölder norm. As a result we obtain the rate of convergence $n^{-\alpha/2d^*},\alpha\in(0,1),$ with latent space dimension $d^*\ge2$ for sufficiently large classes of networks. Additionally, our approximation theorem allows for an explicit bound on the discriminator approximation error for Wasserstein-type GANs, which achieve the rate $n^{-\alpha/d^*},\alpha\in(0,1)$. 

We use a simple illustrative example to assess the practical implications of our theoretical results. This example allows us to quantify the rate depending on the number of observations, the dimension reduction property, and the stabilizing effect of a Lipschitz-constrained discriminator class.

\paragraph{Related work.}
The existence and uniqueness of the optimal generator for Vanilla GANs is shown by \cite{Biau2018} under the condition that the class $\mathcal{G}$ is convex and compact. They also study the asymptotic properties of Vanilla GANs.  
\cite{Belomnestny2021} have shown a non-asymptotic rate of convergence in the Jensen-Shannon divergence for Vanilla GANs with neural networks under the assumption that the density of $\P^*$ exists and that the generator functions are continuously differentiable.

In practice, however, the \textit{Rectifier Linear Unit} activation function (ReLU activation function) is commonly used \citep[p.13]{Aggarwal2018}. The resulting neural network generates continuous piecewise linear functions. Therefore, the convergence rate of \cite{Belomnestny2021} combined with \cite{Belomnestny2023} is not applicable to this class of functions. 

The statistical analysis of Wasserstein GANs is much better understood. \cite{Biau2021_Wasserstein} have studied optimization and asymptotic properties. \cite{Liang2018} has shown error decompositions with respect to the Kullback-Leibler divergence, the Hellinger distance and the Wasserstein distance. The case where the unknown distribution lies on a low-dimensional manifold is considered in \cite{Schreuder2020} as well as \cite{tang2022}. The latter also derive minmax rates in a more general setting using the Hölder metric.  Assuming that the density function of $\P^*$ exists, \cite{Liang2017} has shown a rate of convergence in Wasserstein distance with ReLU activation function with a factor growing exponentially in the depth of the network. Theoretical results including sampling the latent distribution in addition to dimension reduction have been derived by \cite{huang2021}, who have also shown a rate of convergence in a slightly more general setting (using the Hölder metric) using ReLU networks whose Lipschitz constant grows exponentially in the depth.
A rate of convergence using the total variation metric and leaky ReLU networks has been shown in \cite{Liang2018}. 

Convergence rates with respect to the Wasserstein distance have been studied by \cite{chen2020} and \cite{Chae2022}. Up to a logarithmic factor, optimal rates in the Hölder metric were obtained by \cite{stephanovitch2023} using smooth networks. In a similar setting, \cite{chakraborty2024} discussed several methods for dimension reduction. Recently, \cite{Suh2024} have reviewed the theoretical advances in Wasserstein GANs.

Ensuring Lipschitz continuity of the discriminator class is the essential property of Wasserstein GANs. Lipschitz-constrained neural networks and their empirical success are subject of ongoing research \newline(\cite{khromov2023}, in context of GANs see \cite{than2021}). Implementations of the Lipschitz constrained discriminator have evolved from weight clipping \citep{Arjovsky} to  penalizing the objective function \citep{gulrajani2017,wei2018,zhou2019,petzka2017,miyato2018,Asokan2023}, which heuristically leads to networks with bounded Lipschitz constants.
\cite{farnia2018} use an objective function that combines Wasserstein and Vanilla GANs. 

\paragraph{Outline.} In \Cref{prelim} we introduce the Vanilla GAN distance, which characterizes the optimization problem \eqref{eq:GAN}. In \Cref{compartibility}, we investigate the relation between the Vanilla GAN distance and the Wasserstein distance. We show that the distances are compatible to each other while not being equivalent. Using this relation, we derive an oracle inequality for the Vanilla GAN in \Cref{lipschitz_vanilla}, where $\mathcal{G}$ is a nonempty compact set and $\mathcal{D}$ is a set of Lipschitz functions. We show that Vanilla GANs can avoid the curse of dimensionality. In \Cref{nn_vanilla} we consider the situation where $\mathcal{G}$ and $\mathcal{D}$ consist of neural networks. Here we relax the Lipschitz condition to a $\alpha$-Hölder condition and prove a quantitative result for the approximation of a Lipschitz function by a feedforward ReLU network with bounded Hölder norm. We then prove a convergence rate for the Vanilla GAN with network generator and discriminator. 
 In \Cref{sec:wasserstein_gan} we obtain a convergence rate for Wasserstein-type GANs with network generator and discriminator using our approximation result. This enables us to compare Vanilla GANs directly to Wasserstein GANs. In \Cref{sec:simulation} we illustrate our theoretical results in a numerical example based on synthetic data.
All proofs are deferred to \Cref{sec:proofs}.

\section{The Vanilla GAN distance}
\label{prelim}
Let us first fix some notation. 
We equip $\mathbb{R}^d$ with the $\ell_p$-norm $|x|_p$, $1\leq p \leq \infty$, denote the number of nonzero entries of a $k \times l$ matrix $A,$ where $ k,l \in \mathbb{N},$ by $|A|_{\ell^0}\coloneqq |\{(i,j)\colon A_{ij} \neq 0 \}|,$ and define the ceiling of $x \in \mathbb{R}$ as $\lceil x\rceil:=\min \{k \in \mathbb{Z} \mid k \geq x\}.$ For ease of notation we abbreviate for $x \in (0, \infty)$
\begin{align}\label{eq:Power}
	[x]^{1;1/2} \coloneqq \max\{ x, \sqrt{x}\}.
\end{align}

For $\Omega\subset \mathbb{R}^d$ and $f\colon \Omega \rightarrow \mathbb{R}$ we define 
$\|f\|_{\infty, \Omega}:=\operatorname{ess \ sup} \{|f(x)|: x \in \Omega\}. $
We denote the set of bounded Lipschitz functions by
\begin{align*}
	\Lip(L, B,\Omega) \coloneqq \Big\{f\colon \Omega \rightarrow \mathbb{R} \,\Big|\, \|f\|_{\infty,\Omega}\leq B, \
\; \frac{|f(x)-f(y)|}{|x-y|_p} \leq L, x, y \in \Omega \Big\}.
\end{align*}
The set of unbounded Lipschitz functions is abbreviated by $\Lip(L, \Omega) \coloneqq \Lip(L,\infty ,\Omega).$
By Rademacher's theorem, a Lipschitz function is differentiable almost everywhere. 
For $\alpha \in (0,1]$ we define the $\alpha$-Hölder norm by
\begin{equation}\label{eq:Hoeldernorm}
\|f\|_{\mathcal{H}^{\alpha}(\Omega)}:=\max \Big\{\|f\|_{{\infty}}, \; \underset{x, y \in \Omega}{\operatorname{ess\; sup}}\; \frac{|f(x)-f(y)|}{|x-y|_p^{\alpha}}\Big\}
\end{equation}
and the $\alpha$-Hölder ball of functions with Hölder constant less or equal than $\Gamma>0 $ as 
\begin{equation}\label{eq:HoelderBall}
\mathcal H^\alpha(\Gamma,\Omega):=\big\{f\colon\Omega\to \mathbb R~\big|~\|f\|_{\mathcal{H}^{\alpha}(\Omega)} < \Gamma\big\}.
\end{equation}
In particular, $\Lip(L, B, \Omega)\subseteq \mathcal{H}^{\alpha}(\max(L, 2B), \Omega) $ for any $\alpha \in (0,1).$ We omit the domain $\Omega$ in our notation if $\Omega=(0,1)^d$. 

 \medskip
 
We observe i.i.d. samples $X_1,..., X_n \sim \P^*$ with values in $\mathcal X\coloneqq (0,1)^d$. On another space  $\mathcal{Z} \coloneqq (0,1)^{d^*}$, called the \textit{latent} space, we choose a latent distribution $\mathbb{U}$. Unless otherwise specified, $X \sim \P^*$ and  $Z\sim \mathbb{U}$. We further assume that $\mathbb{P}^*$ and $\mathbb{U}$ have finite first moments.
Throughout, the generator class $\mathcal{G}$ is a nonempty set of measurable functions from $\mathcal Z$ to $\mathcal X$. For $G \in \mathcal{G}$ the distribution of the random variable $G(Z)$ is denoted by $\mathbb{P}^{G(Z)}.$ 

Typically the discriminator class consists of functions mapping to $\mathbb{R}$ concatenated to a sigmoid function that maps into $(0,1)$ to account for the classification task. This is especially the case for standard classification networks.
The most common sigmoid function used for this purpose is the logistic function
 $
 	x\mapsto(1+e^{-x})^{-1},
 $
 which we fix throughout. Together with a shift by $\log4$, we can rewrite the Vanilla GAN optimization problem \eqref{eq:GAN} as 
\begin{equation}
	\inf_{G\in\mathcal{G}} \V_{\mathcal{W}} (\P^*, \P^{G(Z)}) 
 \label{eq:Vanilla_logistic}
\end{equation}
in terms of the \textit{Vanilla GAN distance} between probability measures $\mathbb P$ and $\mathbb Q$ on $\mathcal{X}$
 \begin{align}
 	\label{shifted_Vanilla}
 	\V_{\mathcal{W}}(\P, \mathbb{Q})	 \coloneqq  \sup_{W \in \mathcal{W}} \E_{\substack{X \sim \P \\ Y \sim \mathbb{Q}}}\Big[-\log\Big(\frac{1+e^{-W(X)}}{2}\Big)-\log\Big(\frac{1+e^{W(Y)}}{2}\Big)\Big],
 \end{align}
where $\mathcal{W}$ is a set of measurable functions $W\colon\mathcal X\to\R$. 
As long as $0 \in \mathcal{W},$ we have that $\V_{\mathcal{W}}\geq 0.$ 

To choose the generator $\hat{G}_n$ as the empirical risk minimizer, the unknown distribution $\P^*$ in \eqref{eq:Vanilla_logistic} must be replaced by the empirical distribution $\P_n$ based on the observations $X_1,...,X_n$. In practice, the expectation with respect to $Z\sim\mathbb U$ is replaced by an empirical mean too, which we omit for the sake of simplicity. Along \cite{huang2021}, the next and all subsequent results easily extend to the corresponding setting. 

The following error bound in terms of the Vanilla GAN distance provides an error decomposition for the empirical risk minimizer of the Vanilla GAN.
\begin{lemma}\label{Vanilla_error_decomp} Assume that $\mathcal{G}$ is chosen such that a minimum exists. Let $\mathcal{W}$ be symmetric, that is, $W \in \mathcal{W}$ implies $-W\in \mathcal{W}.$ 
	For \begin{equation}\label{G_hat_standard}
		\hat{G}_n \in \underset{G \in \mathcal{G}}{\arg\min}\; \V_{\mathcal{W}} (\P_n, \P^{G(Z)})
	\end{equation}
	we have that
	\begin{align}\label{eq:Vanilla_error_decomp}
			\V_{\mathcal{W}} (\P^*, \P^{\hat{G}_n(Z)}) \leq &	\min_{G \in \mathcal{G}}\V_{\mathcal{W}} (\P^*, \P^{G(Z)})+2\sup_{W \in \Lip(1) \circ \mathcal{W}}\frac{1}{n}\sum_{i=1}^n\big(W(X_i)-\E[W(X)]\big).
	\end{align}
\end{lemma}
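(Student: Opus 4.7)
The strategy is to execute a classical ERM oracle decomposition after reformulating $\V_{\mathcal W}$ so that the Lipschitz envelope $\Lip(1)\circ\mathcal W$ appears naturally. First I would set $\phi(x):=-\log\bigl((1+e^{-x})/2\bigr)$ and rewrite
\[\V_{\mathcal W}(\P,\mathbb Q)=\sup_{W\in\mathcal W}\Big\{\E_{X\sim\P}[\phi(W(X))]+\E_{Y\sim\mathbb Q}[\phi(-W(Y))]\Big\}.\]
A one-line derivative computation gives $\phi'(x)=(1+e^x)^{-1}\in(0,1)$, so $\phi$ and $-\phi$ are $1$-Lipschitz on $\R$; hence $\pm\phi\circ W\in\Lip(1)\circ\mathcal W$ for every $W\in\mathcal W$. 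This is the only structural input we need from the sigmoid reformulation.

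Next, pick a minimizer $G^\ast\in\arg\min_{G\in\mathcal G}\V_{\mathcal W}(\P^\ast,\P^{G(Z)})$, which exists by the compactness hypothesis on $\mathcal G$. Adding and subtracting $\V_{\mathcal W}(\P_n,\P^{\hat G_n(Z)})$ and $\V_{\mathcal W}(\P_n,\P^{G^\ast(Z)})$ and invoking the ERM inequality $\V_{\mathcal W}(\P_n,\P^{\hat G_n(Z)})\le \V_{\mathcal W}(\P_n,\P^{G^\ast(Z)})$ yields
\[\V_{\mathcal W}(\P^\ast,\P^{\hat G_n(Z)})\le \V_{\mathcal W}(\P^\ast,\P^{G^\ast(Z)})+R_1+R_2,\]
with $R_1:=\V_{\mathcal W}(\P^\ast,\P^{\hat G_n(Z)})-\V_{\mathcal W}(\P_n,\P^{\hat G_n(Z)})$ and $R_2:=\V_{\mathcal W}(\P_n,\P^{G^\ast(Z)})-\V_{\mathcal W}(\P^\ast,\P^{G^\ast(Z)})$.

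Finally I would reduce both residuals to the same uniform empirical process over $\Lip(1)\circ\mathcal W$. The elementary inequality $\sup f-\sup g\le\sup(f-g)$ applied to each pair of Vanilla-GAN suprema makes the $\mathbb Q$-side terms cancel, leaving
\[R_1\le \sup_{W\in\mathcal W}\Big(\tfrac1n\sum_{i=1}^n(-\phi\circ W)(X_i)-\E[(-\phi\circ W)(X)]\Big)\]
and the analogous bound for $R_2$ with $\phi$ in place of $-\phi$. Since $\pm\phi\circ W\in\Lip(1)\circ\mathcal W$, each of these two suprema is majorized by $\sup_{\tilde W\in\Lip(1)\circ\mathcal W}\bigl(\tfrac1n\sum_{i=1}^n\tilde W(X_i)-\E[\tilde W(X)]\bigr)$, and adding the two bounds produces the factor $2$ and the claimed oracle inequality. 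The symmetry hypothesis on $\mathcal W$ enters by ensuring that $\Lip(1)\circ\mathcal W$ is itself closed under negation, so both one-sided residuals aggregate into this single supremum without absolute values. I expect the only source of care to be the sign bookkeeping in the $\sup f-\sup g$ step; no concentration machinery is needed at this stage, as the quantitative control of the resulting empirical supremum is handled separately via the Wasserstein compatibility of \Cref{compartibility}.
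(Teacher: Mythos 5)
Your proof is correct and follows essentially the same route as the paper's: add and subtract the empirical Vanilla GAN distances, invoke the ERM optimality of $\hat G_n$, and bound the two one-sided residuals (where the $\mathbb Q$-side terms cancel) by composing with the $1$-Lipschitz functions $\pm\phi$, which places them in $\Lip(1)\circ\mathcal W$. The only cosmetic differences are that the paper carries a generic $G\in\mathcal G$ and minimizes at the end rather than fixing $G^\ast$ upfront, and that your attribution of the role of the symmetry hypothesis is slightly off ($\Lip(1)\circ\mathcal W$ is closed under negation simply because $-\phi\in\Lip(1)$, independently of the symmetry of $\mathcal W$), neither of which affects the argument.
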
 
The first term in \eqref{eq:Vanilla_error_decomp} is the error due to the approximation capabilities of the class $\mathcal{G}.$ The second term refers to the stochastic error due to the amount of training data. As $\mathcal{W}$ is symmetric, the stochastic error is non-negative. Both error terms depend on the discriminator class $\mathcal W$. 
Large discriminator classes lead to finer discrimination between different probability distributions and thus to a larger approximation error term. Similarly, the stochastic error term will increase with the size of $\mathcal W$. The cost of small classes $\mathcal W$ is a less informative loss function on the left side of \eqref{eq:Vanilla_error_decomp}.

\medskip

If $\mathcal{W}$ is the set of all measurable functions, the analysis by \citet[Theorem 1]{goodfellow} shows that the Vanilla GAN distance is equivalent to the Jensen-Shannon distance. \cite{Arjovsky_2} have elaborated on the theoretical and practical disadvantages of this case. Similar to the Total Variation distance or the Hellinger distance, the Jensen-Shannon divergence is not compatible with high-dimensional settings because it cannot distinguish between different singular measures. Therefore, we need a weaker distance and thus restrict $\mathcal{W}$.  

The key insight of Wasserstein GANs \eqref{eq:WassersteinGAN} is that this particular drawback of the Jensen-Shannon distance can be solved by the Wasserstein distance. 
The latter is a metric on the space of probability distributions with finite first moment and meterizes weak convergence in this space \cite[Theorem 6.9]{Villani2008}. 
Let $\P$ and $\mathbb{Q}$ be two probability distributions on the same measurable space $(\Omega, \mathcal{A})$, the \textit{Wasserstein}-$1$ distance is defined as
\begin{align}
	\label{Wasserstein_distance}
	\mathsf{W}_1(\P, \mathbb{Q})\coloneqq \sup_{\substack{W \in \Lip(1) \\ W(0) = 0} }\E_{\substack{X \sim \P \\ Y \sim \mathbb{Q}}}	[W(X)-W(Y)]  = \sup_{W \in \Lip(1)} \E_{\substack{X \sim \P \\ Y \sim \mathbb{Q}}}	[W(X)-W(Y)] .
\end{align}
Bounds for weaker metrics, such as the Kolmogorov or Levy metric, can be easily derived from the bounds in the Wasserstein metric under weak conditions, see e.g. \cite{Gibbs2002}.
\medskip

Therefore, we choose $\mathcal{W} = \Lip(L)$ for some $L\ge 1$ in \Cref{Vanilla_error_decomp}. In this case the following result shows that the existence of an empirical risk minimizer is guaranteed as soon as $\mathcal{G}$ is compact. 
\begin{lemma}\label{existence_minimizer} Assume  $\mathcal{G}$ is compact with respect to the supremum norm.
	The map $T \colon \mathcal{G} \rightarrow \mathbb{R}_{\geq 0},$\linebreak$  T(G) \coloneqq V_{\Lip(L)}(\P_n, \P^{G(z)})$ is continuous and $ \underset{\mathcal{G}}{\arg\min} V_{\Lip(L)}(\P^*, \P^{G(z)})$ is nonempty.
\end{lemma}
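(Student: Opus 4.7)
The plan is to show that $T$, and likewise the analogous population objective $G\mapsto \V_{\Lip(L)}(\P^*,\P^{G(Z)})$, is in fact \emph{Lipschitz} continuous on $(\mathcal{G},\|\cdot\|_\infty)$ with constant $L$. Continuity then falls out immediately, and the existence of a minimizer follows from the extreme value theorem applied to a continuous function on the nonempty compact set $\mathcal{G}$.

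The computation reduces to a propagation-of-errors bound through the two ``link'' functions appearing in the integrand of \eqref{shifted_Vanilla},
$$\phi(t):=-\log\Big(\tfrac{1+e^{-t}}{2}\Big),\qquad \psi(t):=-\log\Big(\tfrac{1+e^{t}}{2}\Big).$$
First I would note that $\phi'(t)=(1+e^{t})^{-1}$ and $\psi'(t)=-e^{t}(1+e^{t})^{-1}$, both bounded by $1$ in absolute value, so $\phi$ and $\psi$ are $1$-Lipschitz on $\R$. For arbitrary $G_1,G_2\in\mathcal{G}$ and every discriminator $W\in\Lip(L)$, the term involving $\phi$ is independent of $G$, so only the $\psi$-contribution matters; the $1$-Lipschitz property of $\psi$ composed with the $L$-Lipschitz property of $W$ then gives
$$\bigl|\E[\psi(W(G_1(Z)))]-\E[\psi(W(G_2(Z)))]\bigr|\le \E\bigl|W(G_1(Z))-W(G_2(Z))\bigr|\le L\,\|G_1-G_2\|_\infty.$$
The right-hand side is independent of $W$, so applying $|\sup_W a_W-\sup_W b_W|\le \sup_W|a_W-b_W|$ yields $|T(G_1)-T(G_2)|\le L\|G_1-G_2\|_\infty$, and the identical estimate with $\P^*$ in place of $\P_n$ gives continuity of the population objective.

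The only point that requires a bit of care is the interchange of the supremum over the infinite-dimensional class $\Lip(L)$ with perturbations of the generator; this is exactly what the uniform bound above is designed to handle, since the constant $L\|G_1-G_2\|_\infty$ does not depend on the particular $W$. Apart from this, the proof is essentially a three-line chain of Lipschitz estimates once $\phi$ and $\psi$ have been controlled, followed by a standard compactness argument to extract the minimizer.
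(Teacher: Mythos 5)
Your proposal is correct and follows essentially the same route as the paper: the paper also reduces the matter to the $1$-Lipschitz continuity of $t\mapsto\log\bigl((1+e^{-t})/2\bigr)$ composed with the $L$-Lipschitz discriminator, obtaining the bound $L\|G_n-G\|_\infty$ uniformly over $W\in\Lip(L)$ (handling the swap of suprema by a case distinction rather than your $|\sup_W a_W-\sup_W b_W|\le\sup_W|a_W-b_W|$ inequality, which is the same thing). The extraction of a minimizer via compactness is identical.
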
 
Hence, we throughout assume the following:
\begin{assumption}\label{compact_assumption}
     $\mathcal{G}$ is compact with respect to the supremum norm.
\end{assumption}
In the context of neural networks the compactness assumption is satisfied for all practically relevant implementations. Furthermore, it should be noted that the aforementioned assumption is only required for the use of the minimizing argument.   
\section{From Vanilla to Wasserstein and back}
\label{compartibility}
Our subsequent analysis builds on the following equivalence result between the Vanilla GAN distance and the  Wasserstein distance with an additional $L^2$-penalty term on the discriminator. 
\begin{theorem}\label{L2_penalty}
    For $L>2$ and $B>0$ we have for probability measures $\P$ and $\mathbb{Q} $ on $\mathcal{X}$
    \begin{align*}
          \sup_{\substack{W \in \Lip(1, B^{\prime})\\ W(\cdot) > -\log(2-2/L)   }} &\Big\{\E_{\substack{X \sim \mathbb{P}\\ Y \sim \mathbb{Q}}}[W(X)-W(Y)] - \frac{L(L-1)}{2} \E_{X \sim \mathbb{Q}}[W(X)^2]\Big\}\\
          &\leq\V_{\Lip(L, B)} (\P, \mathbb{Q})\leq \sup_{\substack{W \in \Lip(L, B) \\ W(\cdot) >-\log(2)}} \Big\{\E_{\substack{X \sim \mathbb{P}\\ Y \sim \mathbb{Q}}}[ W(X) - W(Y)] - \frac{e^{B}}{(2e^B-1)^2} \E_{X \sim \mathbb{Q}}[W(X)^2]\Big\},
    \end{align*}
    where $B^{\prime} = \log((1+e^B)/2).$
\end{theorem}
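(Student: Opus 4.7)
My strategy is to reformulate both sides of the inequality in terms of the smooth function $\psi(x) := \log((1+e^x)/2)$ and its inverse $g(u) := \log(2e^u - 1)$, defined on $u > -\log 2$. Using the elementary identity $\psi(x) - \psi(-x) = x$, the two summands in \eqref{shifted_Vanilla} simplify to
\begin{align*}
-\log\!\Big(\tfrac{1+e^{-w}}{2}\Big) = w - \psi(w), \qquad -\log\!\Big(\tfrac{1+e^{w}}{2}\Big) = -\psi(w),
\end{align*}
so that the Vanilla GAN integrand equals $W(X) - \psi(W(X)) - \psi(W(Y))$. Both inequalities will follow from a substitution $W \leftrightarrow W'$ that matches this quantity against the penalised Wasserstein functional, reducing the problem to a scalar estimate on $h(u) := 2u - g(u)$.

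For the \textbf{upper bound}, I would set $W' := \psi(W)$ for each $W \in \Lip(L, B)$. Since $\psi$ is $1$-Lipschitz with image $[\psi(-B), \psi(B)] = [B'-B, B']$ and satisfies $\psi > -\log 2$ pointwise, $W'$ lies in $\Lip(L, B') \subset \Lip(L, B)$ and is admissible on the right-hand side. A short computation shows that the difference between the Vanilla and penalised Wasserstein integrands equals $W(X) - 2\psi(W(X)) + c\,\psi(W(X))^2$, so the substitution $u = \psi(w)$ turns the claim into the pointwise estimate $h(u) \ge c\,u^2$ on $u \in [B'-B, B']$. By Taylor with $h(0) = h'(0) = 0$, $h(u) = \tfrac{1}{2} h''(\xi) u^2$ for some $\xi$ between $0$ and $u$, with $h''(\xi) = 2e^{\xi}/(2e^{\xi} - 1)^2$ monotone decreasing on $(-\log 2, \infty)$. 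Its minimum over the relevant interval yields $h''(B')/2 = (1+e^B)/(2e^{2B})$, and the elementary polynomial inequality $(1+e^B)(2e^B - 1)^2 \ge 2 e^{3B}$ for $B \ge 0$ converts this into the claimed constant $c = e^B/(2e^B - 1)^2$.

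For the \textbf{lower bound}, I would reverse the substitution: given $W' \in \Lip(1, B')$ with $W' > -\log(2 - 2/L)$, set $W := g(W')$. Since $g'(u) = 2e^u/(2e^u - 1)$ is decreasing and equals $L$ precisely at $u = -\log(2 - 2/L)$, $W$ has Lipschitz constant at most $L$. Because $\psi \circ g = \mathrm{id}$, the Vanilla GAN integrand simplifies to $g(W'(X)) - W'(X) - W'(Y)$, so the desired inequality reduces to the pointwise bound $h(u) \le \tfrac{L(L-1)}{2}\,u^2$ on $u \in (-\log(2 - 2/L), B']$. The same Taylor argument, now using the maximum of $h''$ on this interval, $h''(-\log(2 - 2/L)) = L(L - 1)$, delivers the estimate.

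The \textbf{main obstacle} is not the scalar analysis of $h$, which is routine, but verifying the sup-norm constraint $\|W\|_\infty \le B$ for the substituted discriminator. For the upper bound this is automatic since $\psi$ shrinks ranges. In the lower-bound direction, however, $W = g(W')$ takes values in $(-\log(L - 1), B]$, which is contained in $[-B, B]$ only when $L \le 1 + e^B$. Beyond this regime one must either modify the substitution or truncate $g(W')$ from below at $-B$ and argue — using the opposing monotonicities of $\phi_+$ and $\phi_-$ — that the truncation does not spoil the inequality; this is where I expect the main technical difficulty to sit.
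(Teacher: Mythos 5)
Your proposal follows essentially the same route as the paper's proof: the paper also works with the conjugate pair $\psi\circ f(x)=\log(2-e^{-x})$ (in your notation, $g(u)-u$), substitutes $W\mapsto f^{-1}(W)$ for the upper bound and $W'\mapsto f(W')$ for the lower bound, and Taylor-expands the remainder $h(u)=2u-g(u)=\tfrac{e^{\xi}}{(2e^{\xi}-1)^2}u^2$, using monotonicity of $h''$ to extract the two constants exactly as you do ($h''$ evaluated at $-\log(2-2/L)$ gives $L(L-1)$, at the upper end of the range gives $e^B/(2e^B-1)^2$). Your upper-bound argument, including the polynomial inequality $(1+e^B)(2e^B-1)^2\ge 2e^{3B}$ relating $h''(B')$ to the stated constant, is correct and complete.

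The one place your write-up stops short is precisely the lower bound's sup-norm constraint, and you are right that this is a genuine issue: $g(W')$ ranges in $(-\log(L-1),B]$, which sits inside $[-B,B]$ only when $L\le 1+e^B$ (equivalently $B\ge\log(L-1)$), and the candidate fix by truncation at $-B$ is not obviously monotone in the objective. You should be aware, however, that the paper's own proof does not resolve this either: it simply asserts that $f\circ W\in\Lip(L,B)$ for every admissible $W\in\Lip(1,B')$ without checking the lower bound on $\|f\circ W\|_{\infty}$, so the theorem as stated is only fully justified in the regime $L\le 1+e^B$ (or after strengthening the pointwise constraint in the left-hand supremum to $W(\cdot)\ge \max\{B'-B,\,-\log(2-2/L)\}$, since $f(B'-B)=-B$). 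In short: you have reproduced the paper's argument and, in flagging this obstacle, identified an imprecision that the paper itself glosses over rather than a defect specific to your own approach.
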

\Cref{L2_penalty} reveals that the Vanilla GAN distance is indeed compatible with the Wasserstein distance and will allow us to prove rates of convergence of the Vanilla GAN with respect to the Wasserstein distance. In doing so, we need to investigate the consequences of the penalty term. An upper bound without the penalty term and independent of $B$ can be shown as in the proof of \Cref{equiv_fast}. For the lower bound, a similiar improvement cannot be expected in general as indicated in \Cref{bsp_lin}. However, the restriction to $\operatorname{Lip}(1,B^{\prime})$ has far less severe consequences than the corresponding restriction in the upper bound.

We can deduce from \Cref{L2_penalty} that the Vanilla GAN distance is bounded from above and below by the Wasserstein distance or the squared Wasserstein distance, respectively. 

\begin{theorem}
	\label[theorem]{equiv_fast}
	 Let $L > 2$ and$B \in [1,\infty].$ For probability measures $\P$ and $\mathbb{Q} $ on $\mathcal{X}$ we have 
  \begin{align*}
		\min\Big(c_1 	\mathsf{W}_1(\P, \mathbb{Q}) ,c_2\mathsf{W}_1(\P, \mathbb{Q})^2\Big) & \leq  \V_{ \Lip(L,B)} (\P, \mathbb{Q})  \leq   L\mathsf{W}_1(\P, \mathbb{Q}),
	\end{align*}
	where $c_1 = \frac{1}{2} \frac{\log(2-2/L)}{d^{1/p}}$ and $c_2 =  \frac{1}{2 d^{2/p}} \frac{1}{L(L-1)}$, setting $1/p=0$ if $p=\infty$.
\end{theorem}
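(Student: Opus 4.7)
The plan is to prove the two inequalities by separate methods. The upper bound follows directly from analytic properties of the logistic-based loss in \eqref{shifted_Vanilla}, while the lower bound will be obtained by feeding a suitably scaled Kantorovich potential into the penalised variational bound of \Cref{L2_penalty}.

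For the upper bound, I would introduce $\phi(x):=-\log\bigl((1+e^{-x})/2\bigr)$ and $\psi(x):=-\log\bigl((1+e^{x})/2\bigr)$ so that $\V_{\Lip(L,B)}(\P,\mathbb{Q})=\sup_{W\in\Lip(L,B)}\E[\phi(W(X))+\psi(W(Y))]$. Two elementary observations suffice: $|\phi'(x)|=(1+e^{x})^{-1}<1$, so $\phi$ is $1$-Lipschitz, and $(\phi+\psi)(x)=\log 4-\log(2+2\cosh x)\leq 0$ because $\cosh x\geq 1$. Adding and subtracting $\E[\phi(W(Y))]$ then gives
\[
    \E[\phi(W(X))+\psi(W(Y))] = \E[\phi(W(X))-\phi(W(Y))]+\E[(\phi+\psi)(W(Y))] \leq \E[\phi(W(X))-\phi(W(Y))].
\]
Since $\phi\circ W$ is $L$-Lipschitz, Kantorovich-Rubinstein duality bounds the right-hand side by $L\mathsf{W}_1(\P,\mathbb{Q})$, proving the upper inequality.

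For the lower bound, \Cref{L2_penalty} yields
\[
    \V_{\Lip(L,B)}(\P,\mathbb{Q}) \geq \sup_{W\in\mathcal{F}}\bigl\{\E[W(X)-W(Y)]-\tfrac{L(L-1)}{2}\E[W(X)^{2}]\bigr\}
\]
with $\mathcal{F}=\{W\in\Lip(1,B'):W(\cdot)>-\log(2-2/L)\}$ and $B'=\log((1+e^{B})/2)$. Let $W^{\ast}\in\Lip(1)$ be an optimal Kantorovich potential for $\mathsf{W}_1(\P,\mathbb{Q})$, which exists because $\P,\mathbb{Q}$ are supported on the compact set $\mathcal{X}=(0,1)^{d}$ of $\ell_{p}$-diameter $d^{1/p}$. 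Constant shifts leave $\E[W^{\ast}(X)-W^{\ast}(Y)]$ invariant, so I would translate $W^{\ast}$ to satisfy $0\leq W^{\ast}\leq d^{1/p}$ on $\mathcal{X}$. Because $L>2$ implies $-\log(2-2/L)<0$, the scaled test function $W:=\lambda W^{\ast}$ lies in $\mathcal{F}$ for every $\lambda\in(0,1]$ provided $B$ is large enough that $B'\geq d^{1/p}$ (otherwise $\lambda$ is additionally capped by $B'/d^{1/p}$). Substituting and using $\E[W^{\ast}(X)^{2}]\leq d^{2/p}$ gives the concave quadratic lower bound $g(\lambda):=\lambda\,\mathsf{W}_1(\P,\mathbb{Q})-\tfrac{L(L-1)d^{2/p}}{2}\lambda^{2}$.

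The remaining step is to maximise $g$ over $\lambda\in(0,1]$. Its unconstrained maximiser is $\lambda_{0}=\mathsf{W}_1(\P,\mathbb{Q})/(L(L-1)d^{2/p})$. If $\lambda_{0}\leq 1$, then $g(\lambda_{0})=\mathsf{W}_1(\P,\mathbb{Q})^{2}/(2L(L-1)d^{2/p})=c_{2}\mathsf{W}_1(\P,\mathbb{Q})^{2}$. If instead $\lambda_{0}>1$, taking $\lambda=1$ gives $g(1)=\mathsf{W}_1(\P,\mathbb{Q})-L(L-1)d^{2/p}/2>\mathsf{W}_1(\P,\mathbb{Q})/2\geq c_{1}\mathsf{W}_1(\P,\mathbb{Q})$, the last inequality following from $c_{1}\leq 1/2$, which holds because $\log(2-2/L)<\log 2<1\leq d^{1/p}$. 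Combining the two regimes yields $\V_{\Lip(L,B)}(\P,\mathbb{Q})\geq\min(c_{1}\mathsf{W}_1,c_{2}\mathsf{W}_1^{2})$. The main obstacle is arranging for $W=\lambda W^{\ast}$ to meet all three constraints defining $\mathcal{F}$ simultaneously: the $1$-Lipschitz cap forces $\lambda\leq 1$, the cutoff $W>-\log(2-2/L)$ is cleanly absorbed by the nonnegative shift of $W^{\ast}$ (available precisely because $L>2$), and the interaction between the $L^{\infty}$-bound $B'$ and the $\ell_{p}$-diameter $d^{1/p}$ is what ultimately produces the dimensional factor $d^{1/p}$ in $c_{1}$ and $c_{2}$.
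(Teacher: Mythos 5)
Your proposal is correct and takes essentially the same route as the paper: the upper bound via the $1$-Lipschitzness of $x\mapsto-\log(1+e^{-x})$ combined with $\phi+\psi\le 0$ (the paper's $\log 4$ inequality), and the lower bound via \Cref{L2_penalty} applied to a scaled optimal potential with the scale optimized in the resulting concave quadratic, reproducing the two regimes and the constants $c_1,c_2$. The only cosmetic difference is that the paper absorbs the diameter $d^{1/p}$ by first shrinking to potentials in $\Lip(1,\log(2-2/L))$ and rescaling the Lipschitz constant at the end, whereas you keep the unit-Lipschitz potential with range $[0,d^{1/p}]$; the interaction with the bound $B'$ that you flag is glossed over in the paper's proof as well (its inclusion $\Lip(1,\log(2-2/L))\subseteq\Lip(1,B')$ implicitly requires $B$ not too small).
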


The assumption $L>2$ is not very restrictive. In practically relevant cases, such as neural network discriminators, the Lipschitz constant is typically quite large. A higher Lipschitz constraint on the discriminator will subsequently result in a less stringent constraint on the neural network. However, an arbitrarily large Lipschitz constant is also undesirable, as the upper bound grows linearly in $L$.

More importantly, we observe a gap between $\mathsf{W}_1(\P, \mathbb{Q})^2$ in the lower bound and $\mathsf{W}_1(\P, \mathbb{Q})$ in upper bound when $\mathsf{W}_1(\P, \mathbb{Q})<1$ which is a consequence of the penalty term in \Cref{L2_penalty}. The following example indicates that this loss is unavoidable, by restricting the discriminator class to a subset of $\Lip(L)$.

\begin{example}\label{bsp_lin}
	For $\varepsilon, \gamma > 0, \gamma + \varepsilon < 1 $ let	$\P= \frac{1}{2} (\delta_{\gamma} + \delta_{\gamma+ \varepsilon} )$ and $\mathbb Q= \frac{1}{2} (\delta_{0} + \delta_{\varepsilon} ).$  
	The Wasserstein distance is then given by
	\begin{equation*}
		\mathsf{W}_1 (\P, \mathbb Q) = \gamma.
	\end{equation*}
	We consider the Vanilla GAN  distance using $L$-Lipschitz affine linear functions as discriminator, \linebreak $\V_{a\cdot +b}(\P, \mathbb Q),$ with $a, b \in \mathbb{R}$ and $|a|\leq L.$ Note that the class of affine linear functions can be represented by one layer neural networks (for a definition see \Cref{nn_vanilla}). The optimal $b$ can be calculated explicitly, the optimal $a$ can be determined numerically. Using the optimal slope $a$ and $b$ we obtain for $\gamma < \varepsilon,$ $\varepsilon = \frac{1}{4}$ and $a > 16$
	\begin{align*}
		\frac{\mathsf{W}_1(\P,  \mathbb Q)^2}{2} \leq \V_{a\cdot +b} (\P,  \mathbb Q) \leq a \cdot \mathsf{W}_1(\P,  \mathbb Q)^2.
	\end{align*}
    If $\gamma\ge \varepsilon$, then the optimal $a$ is $a= L$ and 
	\begin{align*}
		\log(2)\cdot\mathsf{W}_1(\P,  \mathbb Q)  \leq \V_{a\cdot +b} (\P,  \mathbb Q)  \leq a \cdot\mathsf{W}_1(\P,  \mathbb Q).
	\end{align*} 
   See \Cref{proof_bsp_lin} for more details on these calculations.
\end{example}

Wasserstein GANs, where the generator is chosen as the empirical risk minimizer of the Wasserstein distance \eqref{Wasserstein_distance}, achieve optimal convergence rates (up to logarithmic factors) with respect to the Wasserstein distance as proved by \cite{stephanovitch2023}, see also \Cref{sec:wasserstein_gan}. In view of \Cref{equiv_fast} we cannot hope that Vanilla GANs achieve the same rate even if we use a Lipschitz discriminator class. This is in line with the better performance of Wasserstein GANs in practice. However, \Cref{equiv_fast} allows us to study the behavior of Vanilla GANs in settings where the dimension of the latent space is smaller than  the dimension of the sample space, a setting that is excluded in all previous works on convergence rates for Vanilla GANs.

\section{Oracle inequality for Vanilla GANs in Wasserstein distance}\label{lipschitz_vanilla}

Our aim is to bound the Wasserstein distance between the unknown distribution $\P^*$ and the generated distribution $\P^{\hat{G}_n(Z)}$ using the empirical risk minimizer $\hat{G}_n$ of the Vanilla GAN. The following oracle inequality shows that imposing a Lipschitz constraint on the discriminator class does circumvent the theoretical limitations of Vanilla GANs which is caused by the Jensen-Shannon distance. Recall notation \eqref{eq:Power}.
\begin{theorem}
	\label[theorem]{dreiecksugls}
	Let  $L > 2$ and $B \in[1,\infty].$ For the empirical risk minimizer $\hat G_n$ from \eqref{G_hat_standard} with $\mathcal W=\Lip(L, B)$ 
	we have 
	\begin{align}
		\label{fehleraufteilung}
			\mathsf{W}_1(\P^*, \P^{\hat{G}_n(Z)}) \leq c \big[\inf_{G \in \mathcal{G}} \mathsf{W}_1(\P^*, \P^{G(Z)})\big]^{1;1/2} + (1+c) [\mathsf{W}_1(\P_n, \P^*)]^{1;1/2},
	\end{align} for some constant $c> 0$ depending on $d,p$ and $L.$
\end{theorem}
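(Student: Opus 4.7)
The plan is to combine the equivalence result of \Cref{equiv_fast} between the Vanilla GAN distance and the Wasserstein distance with the triangle inequality for $\mathsf{W}_1$, exploiting the optimality of $\hat G_n$ as an empirical risk minimizer. I would start from
\begin{equation*}
\mathsf{W}_1(\P^*, \P^{\hat G_n(Z)}) \leq \mathsf{W}_1(\P^*, \P_n) + \mathsf{W}_1(\P_n, \P^{\hat G_n(Z)}),
\end{equation*}
and then control only the second summand through the Vanilla GAN distance evaluated at the pair $(\P_n, \P^{\hat G_n(Z)})$, where the definition \eqref{G_hat_standard} of $\hat G_n$ can be directly used.

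For that second summand, the lower bound of \Cref{equiv_fast} gives
\begin{equation*}
\min\bigl(c_1 \mathsf{W}_1(\P_n, \P^{\hat G_n(Z)}),\, c_2 \mathsf{W}_1(\P_n, \P^{\hat G_n(Z)})^2\bigr) \leq \V_{\Lip(L,B)}(\P_n, \P^{\hat G_n(Z)}).
\end{equation*}
By \eqref{G_hat_standard}, the right-hand side is bounded by $\V_{\Lip(L,B)}(\P_n, \P^{G(Z)})$ for every $G \in \mathcal G$. Applying the upper bound of \Cref{equiv_fast} and then the $\mathsf{W}_1$-triangle inequality yields
\begin{equation*}
\V_{\Lip(L,B)}(\P_n, \P^{G(Z)}) \leq L \mathsf{W}_1(\P_n, \P^{G(Z)}) \leq L \mathsf{W}_1(\P_n, \P^*) + L \mathsf{W}_1(\P^*, \P^{G(Z)}),
\end{equation*}
so that taking the infimum over $G$ produces a bound of the form $R := L \mathsf{W}_1(\P_n, \P^*) + L \inf_{G \in \mathcal G} \mathsf{W}_1(\P^*, \P^{G(Z)})$ on $\V_{\Lip(L,B)}(\P_n, \P^{\hat G_n(Z)})$.

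Inverting the min-inequality requires a case distinction: writing $W := \mathsf{W}_1(\P_n, \P^{\hat G_n(Z)})$, either $W \leq c_1/c_2$, in which case $c_2 W^2 \leq R$ gives $W \leq \sqrt{R/c_2}$, or $W > c_1/c_2$, in which case $c_1 W \leq R$ gives $W \leq R/c_1$; hence $W \leq C[R]^{1;1/2}$ with $C := \max(1/c_1, 1/\sqrt{c_2})$. Two elementary properties of $[\cdot]^{1;1/2}$, namely $[a+b]^{1;1/2} \leq [a]^{1;1/2} + [b]^{1;1/2}$ (verified by distinguishing $a+b \leq 1$ from $a+b > 1$ and using $a \leq [a]^{1;1/2}$) and $[\lambda a]^{1;1/2} \leq \lambda [a]^{1;1/2}$ for $\lambda \geq 1$, convert this into
\begin{equation*}
\mathsf{W}_1(\P_n, \P^{\hat G_n(Z)}) \leq c \bigl[\inf_{G \in \mathcal G} \mathsf{W}_1(\P^*, \P^{G(Z)})\bigr]^{1;1/2} + c \bigl[\mathsf{W}_1(\P_n, \P^*)\bigr]^{1;1/2}
\end{equation*}
with $c := CL$. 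Plugging this into the initial triangle inequality and using $\mathsf{W}_1(\P_n, \P^*) \leq [\mathsf{W}_1(\P_n, \P^*)]^{1;1/2}$ absorbs the first triangle term into the coefficient $1+c$ of the stochastic error, giving the claimed inequality.

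The main obstacle is purely technical: since \Cref{equiv_fast} only controls a minimum of $\mathsf{W}_1$ and $\mathsf{W}_1^2$ from above, its inversion unavoidably produces the $\max(R, \sqrt R) = [R]^{1;1/2}$ behavior, and this two-regime structure must be carried consistently through the triangle inequality and the infimum over $G$. Establishing the subadditivity and scaling of $[\cdot]^{1;1/2}$ is what keeps the constants clean and yields the particular $c, 1+c$ form in the statement.
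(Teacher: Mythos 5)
Your proposal is correct and follows essentially the same route as the paper's proof: triangle inequality for $\mathsf{W}_1$, inversion of the lower bound in \Cref{equiv_fast} to get $\mathsf{W}_1(\P_n,\P^{\hat G_n(Z)})\le c[\V_{\Lip(L,B)}(\P_n,\P^{\hat G_n(Z)})]^{1;1/2}$ with $c=\max(c_1^{-1},c_2^{-1/2})$, the ERM property, the upper bound $\V_{\Lip(L,B)}\le L\mathsf{W}_1$, and subadditivity of $[\cdot]^{1;1/2}$. Your write-up merely makes explicit the case distinction behind the min-inversion and the elementary properties of $[\cdot]^{1;1/2}$ that the paper uses tacitly.
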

Note that the discriminator class $\Lip(L, B)$ admits no finite dimensional parameterization and is therefore not feasible in practice. We will return to this issue in \Cref{nn_vanilla}.
The terms in \eqref{fehleraufteilung} can be interpreted analogously to the  interpretation of the bound in \Cref{Vanilla_error_decomp}, but here we have an oracle inequality with respect to the Wasserstein distance. The first term is the approximation error. It is large when $\mathcal{G}$ is not flexible enough to provide a good approximation of $\P^*$ by $\P^{G(Z)}$ for some $G\in\mathcal G$. The second term refers to the stochastic error. With a growing number of observations the empirical measure $\P_n$ converges to $\P^*$ in Wasserstein distance, see \cite{Dudley1969}, and thus the stochastic error converges to zero. Together with the bounds on $\mathsf W_1(\P_n,\P^*)$ by \cite{Schreuder2020_2} we conclude the following:

\begin{corollary}
	\label[corollary]{rate_schreuder} 	Let $L > 2, B \in[1,\infty]$.
 The empirical risk minimizer  $\hat{G}_n $ from \eqref{G_hat_standard} with $\mathcal W=\Lip(L, B)$ satisfies for some constant $c> 0$ depending on $d,p$ and $L$ that
	\begin{align*}
		\E[\mathsf{W}_1(\P^*, \P^{\hat{G}_n(Z)})] \leq& 
  \inf_{G^*\colon \mathcal{Z} \rightarrow \mathcal{X}} 
  \Big\{ c[\mathsf{W}_1(\P^*, \P^{G^*(Z)})]^{1;1/2} + c [\inf_{G \in \mathcal{G}}\|G-G^*\|_{\infty}]^{1;1/2}  \Big\}\\
 &\qquad \qquad\qquad + c \begin{cases}n^{-1 /2 d}, & d>2, \\ n^{-1/4}(\log n)^{1/2}, & d=2, \\ n^{-1 / 4}, & d=1,\end{cases}
\end{align*} 
where the infimum is taken over all Borel measurable functions $G^*\colon \mathcal{Z} \rightarrow \mathcal{X}$.
\end{corollary}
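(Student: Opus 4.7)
The plan is to start from the oracle inequality of \Cref{dreiecksugls}, take expectations on both sides, and separately control the two resulting terms: the approximation error and the stochastic error.

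For the approximation error, I would insert an arbitrary measurable reference map $G^*\colon\mathcal Z\to\mathcal X$ via the triangle inequality for $\mathsf W_1$:
\begin{equation*}
  \mathsf{W}_1(\P^*, \P^{G(Z)}) \leq \mathsf{W}_1(\P^*, \P^{G^*(Z)}) + \mathsf{W}_1(\P^{G^*(Z)}, \P^{G(Z)}).
\end{equation*}
The last term is bounded by $\|G-G^*\|_\infty$, since for any $W\in\Lip(1)$ the push-forward estimate yields $|\E[W(G^*(Z))-W(G(Z))]|\le \E|G^*(Z)-G(Z)|\le \|G-G^*\|_\infty$. Infimising over $G\in\mathcal{G}$ and then over $G^*$ handles the first summand in \Cref{dreiecksugls}, provided I can pass the operation $[\,\cdot\,]^{1;1/2}$ through a sum. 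This is easy: for any $x,y\ge0$ the inequalities $x+y\le[x]^{1;1/2}+[y]^{1;1/2}$ and $\sqrt{x+y}\le\sqrt x+\sqrt y\le[x]^{1;1/2}+[y]^{1;1/2}$ give $[x+y]^{1;1/2}\le[x]^{1;1/2}+[y]^{1;1/2}$, which is exactly the form needed.

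For the stochastic error I would use the elementary bound $[x]^{1;1/2}\le x+\sqrt{x}$ and Jensen's inequality applied to the concave square root:
\begin{equation*}
  \E\bigl[[\mathsf{W}_1(\P_n,\P^*)]^{1;1/2}\bigr]\le \E[\mathsf{W}_1(\P_n,\P^*)]+\sqrt{\E[\mathsf{W}_1(\P_n,\P^*)]}.
\end{equation*}
Then I plug in the standard bounds from Schreuder (2020) on $\E[\mathsf W_1(\P_n,\P^*)]$ for a distribution supported in $(0,1)^d$, namely of order $n^{-1/d}$ for $d>2$, $n^{-1/2}\log n$ for $d=2$, and $n^{-1/2}$ for $d=1$. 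Since these quantities are bounded and vanish as $n\to\infty$, the square-root terms dominate and produce exactly the three cases $n^{-1/(2d)}$, $n^{-1/4}(\log n)^{1/2}$, $n^{-1/4}$ stated in the corollary; the non-squared pieces $n^{-1/d}$ etc.\ are absorbed into the constant $c$.

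The only mildly delicate step is verifying the subadditivity of $[\,\cdot\,]^{1;1/2}$ and making the case distinction on whether the approximation/stochastic quantities are larger or smaller than one; the rest is just a careful bookkeeping combination of \Cref{dreiecksugls}, the push-forward bound for $\mathsf W_1$, Jensen's inequality, and the cited non-asymptotic bounds on the empirical Wasserstein distance. No genuinely new estimate is required beyond these ingredients.
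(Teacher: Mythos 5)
Your proposal is correct and follows essentially the same route as the paper: both insert an arbitrary $G^*$ into the approximation term of \Cref{dreiecksugls} via the triangle inequality and the coupling bound $\mathsf{W}_1(\P^{G^*(Z)},\P^{G(Z)})\le\|G-G^*\|_\infty$, and both control the stochastic term with Jensen's inequality followed by the Schreuder--Serrano bound on $\E[\mathsf{W}_1(\P_n,\P^*)]$. The subadditivity of $[\,\cdot\,]^{1;1/2}$ that you verify explicitly is used implicitly in the paper as well, so no substantive difference remains.
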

If there is some $G^*$ such that $\P^* = \P^{G^*(Z)},$ which is commonly assumed in the GAN literature, see e.g. \cite{stephanovitch2023}, \ the first term vanishes and the approximation error is bounded by $\inf_{G \in \mathcal{G}}[\|G-G^*\|^{1;1/2}_{\infty}]$. In the bound of the stochastic error we observe the curse of dimensionality: For large dimensions $d$ the rate of convergence $n^{-1/2d}$ deteriorates. 

To allow for a dimension reduction setting, we adopt the miss-specified setting from \citet[Theorem 1]{vardanyan2023}. In this scenario we can conclude statistical guarantees for Vanilla GANs that are comparable to the results obtained for Wasserstein GANs by \citet[Theorem 2]{Schreuder2020}. 
In view of \Cref{L2_penalty} we expect a slower rate of convergence compared to Wasserstein GANs.

\begin{theorem}\label{dim_red}  Let $L > 2, B \in[1,\infty]$ and  $M>0$. The empirical risk minimizer $\hat{G}_n$  from \eqref{G_hat_standard} with $\mathcal W=\Lip(L, B)$ satisfies 
\begin{align*}
	\E[\mathsf{W}_1(\P^*, \P^{\hat{G}_n(Z)})] &\leq 
 \inf_{\substack{G^{*} \in \Lip(M, \mathcal{Z})}}
  \Big\{ c[\mathsf{W}_1(\P^*, \P^{G^*(Z)})]^{1;1/2} + c [\inf_{G \in \mathcal{G}}\|G-G^*\|_{\infty}]^{1;1/2}  \Big\}\\
 &\qquad\qquad\qquad+c \begin{cases}n^{-1 /2 d^*}, & d^*>2, \\ n^{-1/4}(\log n)^{1/2}, & d^*=2, \\ n^{-1 / 4}, & d^*=1,\end{cases}
\end{align*}
 for some constant $c$ depending $d^*, d, p, L$ and $M$.
\end{theorem}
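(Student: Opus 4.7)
The plan is to combine the Wasserstein oracle inequality of \Cref{dreiecksugls} with a coupling argument that replaces the ambient dimension $d$ by the latent dimension $d^*$ in the stochastic error term, which is precisely what \Cref{rate_schreuder} fails to do because it applies the empirical Wasserstein bound of \cite{Schreuder2020_2} directly on $\mathcal X$.

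First I would handle the approximation term. For any $G^{*} \in \Lip(M, \mathcal{Z})$, the triangle inequality together with the trivial bound $\mathsf{W}_1(\P^{G^{*}(Z)}, \P^{G(Z)}) \leq \|G - G^{*}\|_{\infty}$ (using $(G^{*}(Z), G(Z))$ as a coupling) gives
\begin{align*}
\inf_{G \in \mathcal G}\mathsf{W}_1(\P^*, \P^{G(Z)}) \leq \mathsf{W}_1(\P^*, \P^{G^{*}(Z)}) + \inf_{G \in \mathcal G}\|G - G^{*}\|_{\infty}.
\end{align*}
The subadditivity $[a+b]^{1;1/2} \leq [a]^{1;1/2} + [b]^{1;1/2}$, which follows from $\sqrt{a+b} \leq \sqrt a + \sqrt b$, then reproduces the approximation part of the desired bound.

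The heart of the proof is to bound $\E[\mathsf{W}_1(\P_n, \P^*)]$ in terms of $d^*$ rather than $d$. Fix $G^{*} \in \Lip(M, \mathcal{Z})$ and an optimal Wasserstein-$1$ coupling $\pi$ of $(\P^*, \P^{G^{*}(Z)})$; sample i.i.d.\ pairs $(X_i, Z_i) \sim \pi$ with marginals $X_i \sim \P^*$ and $Z_i \sim \mathbb{U}$, and set $\tilde{\P}_n := \frac{1}{n}\sum_i \delta_{G^{*}(Z_i)}$ and $\mathbb{U}_n := \frac{1}{n}\sum_i \delta_{Z_i}$. The triangle inequality then yields
\begin{align*}
\mathsf{W}_1(\P_n, \P^*) \leq \mathsf{W}_1(\P_n, \tilde{\P}_n) + \mathsf{W}_1(\tilde{\P}_n, \P^{G^{*}(Z)}) + \mathsf{W}_1(\P^{G^{*}(Z)}, \P^*).
\end{align*}
The first term is controlled by the diagonal coupling, giving $\E[\mathsf{W}_1(\P_n, \tilde{\P}_n)] \leq \E[|X - G^{*}(Z)|] = \mathsf{W}_1(\P^*, \P^{G^{*}(Z)})$; the second is at most $M \cdot \mathsf{W}_1(\mathbb{U}_n, \mathbb{U})$ since $G^{*}$ is $M$-Lipschitz (Wasserstein is contractive under Lipschitz push-forward); the third is again the approximation error. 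Applying \cite{Schreuder2020_2} to $\E[\mathsf{W}_1(\mathbb{U}_n, \mathbb{U})]$ in dimension $d^*$ produces the dichotomous rate stated in the theorem.

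Combining both pieces through \Cref{dreiecksugls}, taking expectation and pulling it inside $[\cdot]^{1;1/2}$ via Jensen's inequality for $\sqrt{\cdot}$ (concretely via $[x]^{1;1/2} \leq x + \sqrt{x}$, so $\E[[X]^{1;1/2}] \leq 2[\E X]^{1;1/2}$), then taking the infimum over $G^{*} \in \Lip(M, \mathcal Z)$, yields the claimed inequality. The main obstacle is the algebraic interplay between the $[\cdot]^{1;1/2}$ transformation inherited from the quadratic lower bound of \Cref{equiv_fast} and the triangle-inequality decompositions: one has to verify that the constants can be absorbed into a single $c$ depending only on $d^*$, $M$, and $L$, and that the $\sqrt{\cdot}$-regime does not degrade the latent-dimensional rate, which holds because $\E[\mathsf{W}_1(\mathbb U_n,\mathbb U)]^{1/2}$ still decays polynomially in $n$ with exponent governed by $d^*$ rather than $d$.
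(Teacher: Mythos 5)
Your proposal is correct and follows essentially the same route as the paper: both bound the approximation term via the triangle inequality and $\mathsf{W}_1(\P^{G^*(Z)},\P^{G(Z)})\le\|G-G^*\|_\infty$, and both control the stochastic term by inserting $\P^{G^*(Z)}$ and its empirical counterpart built from latent samples $Z_i$, splitting into a coupling cost $\E[|X_i-G^*(Z_i)|_p]=\mathsf{W}_1(\P^*,\P^{G^*(Z)})$ plus $M\cdot\mathsf{W}_1(\mathbb U_n,\mathbb U)$, to which the Schreuder--Niles-Weed bound is applied in dimension $d^*$. The only cosmetic difference is that you phrase the split via explicit couplings while the paper argues through the Kantorovich dual supremum; the content is identical.
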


The Wasserstein distance $\mathsf{W}_1(\P^{G^*(Z)}, \P^*)$ now includes an error due to the dimension reduction while the stochastic error is determined by the potentially much smaller dimension $d^*<d$ of the latent space. Compared to \Cref{rate_schreuder}, the only price for this improvement is the additional Lipschitz restriction on $G^*$. We observe a trade-off in the choice of $d^*$, since large latent dimensions reduce the approximation error for $\mathbb P^*$, but increase the stochastic error term.  Additionally, there is a trade-off in $M$. A larger constant $M$ results in a smaller value of $W_1(\P^*, \P^{G^*(Z)})$, but increases the constant $c$. If the unknown distribution $\mathbb{P}^*$ is supported on a lower dimensional subspace and there exists a $G^*\in \operatorname{Lip}(M, \mathcal{Z})$ such that $\mathbb{P}^{G^*(Z)} = \mathbb{P}^*$, then the rate of convergence is solely determined by the dimension $d^*$ of $\mathcal{Z}$. This is true for the smallest possible $d^*$ for which a perfect $G^*$ exists, as well as any $d^{**}$ larger than $d^*$.

In many applications, the smallest possible $d^*$ is unknown. \Cref{dim_red} covers both over- and underestimation of the true dimension of the lower dimensional subspace. If the choice of $d^*$ is too small, then $\mathsf{W}_1(\P^*, \P^{G^*(Z)})$ might not converge to $0,$ but the stochastic error still converges with the smaller rate $d^*.$ If $d^*$ is selected to be larger than the dimension of the lower dimensional subspace, then there could be a $G^* \in \operatorname{Lip}(M, \mathcal{Z})$ such that $\mathsf{W}_1(\P^*, \P^{G^*(Z)})=0$, but the stochastic rates converges only with rate $d^*.$  
In the special case that a function $G^*\in \operatorname{Lip}(M, \mathcal{Z}) $ exists such that $\mathsf{W}_1(\P^*, \P^{G^*(Z)})=0$ the rate $n^{-1/2d^*}$ is slower to the rate $n^{-1/d^*}$ obtained for the Wasserstein GAN by \cite{Schreuder2020}. This is in line with \Cref{equiv_fast} and \Cref{bsp_lin}.

However, \Cref{dim_red} reveals why Vanilla GANs do perform well in high dimensions  in the setting of an unknown distribution on a lower dimensional manifold. This phenomenon could not be explained in previous work on Vanilla GANs. \cite{Belomnestny2021} and \cite{Biau2018} both obtain rates in the Jensen-Shannon distance.

\section{Vanilla GANs with network discriminator}
\label{nn_vanilla}
In practice, both $\mathcal{G}$ and $\mathcal{D}$ are sets of neural networks. Our conditions on the generator class $\mathcal G$ are compactness and good approximation properties of some $G^*$ which is chosen such that $\P^{G^*(Z)}$ mimics $\P^*$. Since neural networks have a finite number of weights, and the absolute value of those weights is typically bounded, the compactness assumption is usually satisfied and neural networks enjoy excellent approximation properties, cf. \cite{DeVore2020}. 

The situation is more challenging for the discriminator class. So far, $\mathcal{D}$ was chosen as the set of Lipschitz functions concatenated to the logistic function. The Lipschitz property is crucial for proof of \Cref{dreiecksugls} and thus for all subsequent results. 

Controlling the Lipschitz constant while preserving the approximation properties is an area of ongoing research and is far from trivial. Without further restrictions on the class of feedforward networks, the Lipschitz constant would be a term that depends exponentially on the size of the network, see \citet[Theorem 3.2]{Liang2017}. Bounding the Lipschitz constant of a neural network is a problem that arises naturally in the implementation of Wasserstein GANs. \cite{Arjovsky} use weight clipping to ensure Lipschitz continuity. Later, other approaches such as gradient penalization (see \cite{gulrajani2017}, which was further developed by \cite{wei2018}, \cite{zhou2019}), Lipschitz penalization \citep{petzka2017}, or spectral penalization \citep{miyato2018} were introduced and have achieved improved performance in practice.

\medskip

To extend the theory from the previous section to neural network discriminator classes, we first generalize \Cref{dreiecksugls} from $\mathcal W=\Lip(L,B)$ to subsets $\mathcal W\subseteq \Lip(L,B)$. As a result there is an additional approximation error term that accounts for the smaller discriminator class. 

\begin{theorem}
		\label[theorem]{dreiecksugls_diskriminierer} Let $L>2, B\in[1,\infty]$.  The empirical risk minimizer  $\hat{G}_n $ from \eqref{G_hat_standard} with $\mathcal{W} \subseteq \Lip(L,B)$ satisfies 
	\begin{align*}
		\mathsf{W}_1(\P^*, \P^{\hat{G}_n(Z)})& \leq c\big[\inf_{G \in \mathcal{G}} \mathsf{W}_1(\P^*, \P^{G(Z)})\big]^{1;1/2} +c \big[\inf_{W^{\prime}  \in \mathcal{W}} \sup_{W \in \Lip(L,B)}\|W-W^{\prime }\|_{\infty}\big]^{1;1/2}\\
        &\qquad\qquad\qquad +c \big[\mathsf{W}_1(\P_n, \P^*)\big]^{1;1/2},
	\end{align*}
 for some constant $c>0$ depending on $d,p$ and $L.$
\end{theorem}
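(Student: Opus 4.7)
The plan is to mirror the proof strategy of Theorem~\ref{dreiecksugls} while tracking an extra error term that accounts for $\mathcal{W}$ being a proper subset of $\Lip(L,B)$. Writing $\delta:=\inf_{W'\in\mathcal W}\sup_{W\in \Lip(L,B)}\|W-W'\|_\infty$ for the discriminator approximation error, the key observation is that the integrand in $\V_{\mathcal W}$ is $1$-Lipschitz in each of its two $W$-arguments: the derivatives of $a\mapsto-\log((1+e^{-a})/2)$ and $a\mapsto-\log((1+e^a)/2)$ both lie in $[-1,1]$. Consequently, for any $W\in \Lip(L,B)$ and any $W'\in\mathcal W$, the integrand evaluated at $W$ exceeds that at $W'$ by at most $|W(X)-W'(X)|+|W(Y)-W'(Y)|\le 2\|W-W'\|_\infty$. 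Taking expectation, then supremum over $W\in\Lip(L,B)$, then infimum over $W'\in\mathcal W$ yields, uniformly in $\P,\mathbb Q$,
\[
\V_{\Lip(L,B)}(\P,\mathbb Q)\;\le\; \V_{\mathcal W}(\P,\mathbb Q)+2\delta.
\]

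Next I would rerun the argument of Theorem~\ref{dreiecksugls} on this smaller class. Applying this inequality at $(\P^*,\P^{\hat G_n(Z)})$ and then invoking Lemma~\ref{Vanilla_error_decomp} for the (symmetrized) class $\mathcal W$, I obtain
\[
\V_{\Lip(L,B)}(\P^*,\P^{\hat G_n(Z)}) \le \min_{G\in\mathcal G}\V_{\mathcal W}(\P^*,\P^{G(Z)}) \;+\; 2\sup_{W\in \Lip(1)\circ\mathcal W}\frac{1}{n}\sum_{i=1}^n\bigl(W(X_i)-\mathbb E W(X)\bigr)\;+\;2\delta.
\]
Because $\mathcal W\subseteq \Lip(L,B)$, the composition class satisfies $\Lip(1)\circ\mathcal W\subseteq \Lip(L)$, so Kantorovich-Rubinstein duality bounds the empirical-process term by $2L\,\mathsf W_1(\P_n,\P^*)$. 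For the first (approximation) term, monotonicity of $\mathcal W\mapsto \V_{\mathcal W}$ together with the upper bound in Theorem~\ref{equiv_fast} gives $\min_G \V_{\mathcal W}(\P^*,\P^{G(Z)})\le L\inf_{G\in\mathcal G}\mathsf W_1(\P^*,\P^{G(Z)})$.

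Finally, the lower bound of Theorem~\ref{equiv_fast} translates the left-hand side back into the Wasserstein distance via $\mathsf W_1(\P^*,\P^{\hat G_n(Z)})\le C\bigl[\V_{\Lip(L,B)}(\P^*,\P^{\hat G_n(Z)})\bigr]^{1;1/2}$, with $C$ depending on $L,d,p$. Applying the sub-additivity $[a+b+c]^{1;1/2}\le [a]^{1;1/2}+[b]^{1;1/2}+[c]^{1;1/2}$ (which follows from $\sqrt{\cdot}$-sub-additivity on $(0,1)$ and from $x\le[x]^{1;1/2}$ on $[1,\infty)$) then splits the bound into the three oracle terms as required. The only genuine obstacle is the first step: establishing that $\V_{\cdot}(\P,\mathbb Q)$ is Lipschitz in the discriminator class in sup-norm with an \emph{absolute} constant (independent of $L$ and $B$), which is what allows $\delta$ to enter additively and, after the final $[\cdot]^{1;1/2}$-conversion, to appear as a clean third summand; everything else is a faithful re-run of the proof of Theorem~\ref{dreiecksugls} with this extra term carried through.
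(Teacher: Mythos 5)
Your proposal is correct and matches the paper's proof in its essential content: the key step — the perturbation bound $\V_{\Lip(L,B)}(\P,\mathbb Q)\le \V_{\mathcal W}(\P,\mathbb Q)+2\delta$ obtained from the $1$-Lipschitz continuity of $x\mapsto-\log((1+e^{x})/2)$ — is exactly the paper's inequality \eqref{Approximationsfehler}, and the passage to and from the Wasserstein distance via \Cref{equiv_fast} together with subadditivity of $[\cdot]^{1;1/2}$ is the same. The only (harmless) difference is bookkeeping: you route the empirical/population swap through \Cref{Vanilla_error_decomp}, whereas the paper applies the triangle inequality for $\mathsf W_1$ and the ERM property to $\V_{\mathcal W}(\P_n,\cdot)$ directly; note also that the symmetry hypothesis of \Cref{Vanilla_error_decomp} is never used in its proof, so no actual symmetrization of $\mathcal W$ (which would interfere with the ERM property) is needed.
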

 The approximation error of the discriminator depends on the supremum norm bound $B$ of the functions in $\mathcal{W}$. While the statement remains true for $B = \infty$, when approximating the set $\operatorname{Lip}(L, B)$, this bound will be essential.
To apply this result, we must ensure that the Lipschitz constant of a set of neural networks $\mathcal W$ is uniformly bounded by some constant $L$. 
Adding penalties to the objective function of the optimization problem does not guarantee a fixed bound on the Lipschitz constant. Approaches such as bounds on the spectral or row-sum norm of matrices in feedforward neural networks ensure a bound on the Lipschitz constant, but lead to a loss of expressiveness when considering ReLU networks, even in very simple cases such as the absolute value, see \cite{huster2019} and \cite{anil2019}. On the other hand, \cite{eckstein2020} has shown that one-layer $L$ Lipschitz networks are dense (with respect to the uniform norm) in the set of all $L$ Lipschitz functions on bounded domains. While this implies that the discriminant approximation error converges to zero for growing network architectures, the density statement does not lead to a rate of convergence that depends on the size of the network. 

\cite{anil2019}, motivated by \cite{chernodub2016}, have introduced an adapted activation function, Group Sort, which leads to significantly improved approximation properties of the resulting networks. They show that networks using the Group Sort activation function are dense in the set of Lipschitz functions, but there is no quantitative approximation result. A discussion of the use of Group Sort in the context of Wasserstein GANs can be found in \cite{Biau2021_Wasserstein}.

To overcome this problem, we would like to approximate not only the optimal discriminating Lipschitz function from the Wasserstein optimization problem in the uniform norm, but also its (weak) derivative. This would allow us to keep the Lipschitz norm of the approximating neural network bounded. For networks with regular activation functions \cite{Belomnestny2023} have studied the simultaneous approximation of smooth functions and their derivatives. \cite{Guehring2020} have focused on ReLU networks and have derived quantitative approximation bounds in higher order Hölder and Sobolev spaces. As an intrinsic insight from approximation theory, the regularity of the function being approximated must exceed the regularity order of the norm used to derive approximation bounds. Therefore, we cannot expect to obtain quantitative approximation results for ReLU networks in Lipschitz norm without assuming the continuous differentiability of the approximated function. 

Unfortunately, the maximizing function of the Wasserstein optimization problem is in general just Lipschitz continuous. Since we cannot increase the regularity of the target function, we instead relax the Lipschitz assumption of the discriminator in \Cref{hoelder_oracle} to $\alpha$-Hölder continuity for $\alpha\in(0,1)$. This generalization in the context of Wasserstein GANs has recently been discussed by \cite{stephanovitch2023}. Recall the definition of the Hölder ball from \eqref{eq:HoelderBall}.

\begin{theorem}
\label{hoelder_oracle}
Let $L>2, B \in [1, \infty),$ $\Gamma > \max(L, 2B)$and $M>0.$
The empirical risk minimizer  $\hat{G}_n $ from \eqref{G_hat_standard} with $\mathcal{W} \subseteq \mathcal{H}^{\alpha}(\Gamma)$
satisfies 
\begin{align*}
  \E[  \mathsf{W}_1(\P^*, \P^{\hat{G}_n(Z)})]\leq &  \inf_{G^* \in \Lip(M, \mathcal{Z})}\Big\{ c\big[\inf_{G\in\mathcal G} \|G^* - G\|_{\infty}^{\alpha}\big]^{1;1/2}
  + c \big[\mathsf{W}_1(\mathbb{P^*}, \P^{G^*(Z)})^{\alpha}\big]^{1;1/2}\Big\}\\
  &+c \big[ \inf_{W \in \mathcal{W}} \sup_{W^* \in \Lip(L,B)}\|W-W^{* }\|_{\infty}\big]^{1;1/2}
  \\& + c   \begin{cases}n^{-\alpha /2 d^*}, & 2\alpha<d^*,  \\ n^{-1/4}(\log n)^{1/2}, & 2 \alpha=d^*,\\ n^{-1 / 4}, & 2\alpha>d^*,\end{cases}
\end{align*}
for some constant $c$ depending on $d^*,d,p,L,M$ and $\Gamma$.

\end{theorem}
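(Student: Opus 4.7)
The plan is to extend the oracle inequality of Theorem \ref{dreiecksugls_diskriminierer} from Lipschitz to $\alpha$-Hölder discriminator classes, by carrying out the chain $\mathsf{W}_1 \lesssim [\V_{\Lip(L,B)}]^{1;1/2} \lesssim [\V_{\mathcal{W}} + \text{discr.\ approx.\ error}]^{1;1/2}$, followed by Lemma \ref{Vanilla_error_decomp} and a triangle inequality through $\P^{G^*(Z)}$. The first inequality is Theorem \ref{equiv_fast}, which requires the assumption $B>\log(3)$. The second uses that the two logistic terms in the integrand of $\V_W$ are Lipschitz in $W$ under $\|\cdot\|_\infty$, so for any $W^* \in \Lip(L,B)$ and $W \in \mathcal W$
\[
|\V_{W^*}(\P, \mathbb Q) - \V_W(\P, \mathbb Q)| \leq c\|W-W^*\|_\infty,
\]
and taking suprema over $\Lip(L,B)$, infima over $\mathcal W$, and finally using $\sup\inf \leq \inf\sup$ reproduces the discriminator approximation term exactly as in the statement. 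The condition $\Gamma > \max(L,2B)$ guarantees $\Lip(L,B) \subseteq \mathcal H^\alpha(\Gamma)$, so the approximation problem is well posed.

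Applying Lemma \ref{Vanilla_error_decomp} to $\hat G_n$ and inserting $\P^{G^*(Z)}$ by triangle inequality for $\V_{\mathcal W}$ gives
\[
\V_{\mathcal W}(\P^*, \P^{\hat G_n(Z)}) \leq \V_{\mathcal W}(\P^*, \P^{G^*(Z)}) + \inf_{G \in \mathcal G}\V_{\mathcal W}(\P^{G^*(Z)}, \P^{G(Z)}) + 2 S_n,
\]
with $S_n \coloneqq \sup_{W \in \Lip(1)\circ\mathcal W} n^{-1}\sum_{i=1}^n (W(X_i) - \E W(X))$. For any $W \in \mathcal{H}^\alpha(\Gamma)$, Jensen's inequality applied inside Kantorovich duality yields the dual Hölder bound $|\E[W(U) - W(V)]| \leq \Gamma \,\mathsf{W}_1(\P^U, \P^V)^\alpha$, which controls the first summand by $\Gamma\,\mathsf{W}_1(\P^*, \P^{G^*(Z)})^\alpha$; the infimum is similarly bounded by $\Gamma \inf_{G\in\mathcal G} \|G^*-G\|_\infty^\alpha$ through a direct Hölder estimate on $\E[W(G^*(Z)) - W(G(Z))]$.

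For $S_n$ the idea is a latent coupling: introduce an independent i.i.d.\ sample $Z_1,\dots,Z_n \sim \mathbb U$, replace $\E W(X)$ by $\E W(G^*(Z))$ at cost $\Gamma\,\mathsf W_1(\P^*, \P^{G^*(Z)})^\alpha$ (absorbed into the first term), and couple $X_i$ with $G^*(Z_i)$ via the optimal transport plan to pass from an empirical process indexed by $\mathcal W \subseteq \mathcal{H}^\alpha(\Gamma, \mathcal X)$ to one indexed by $\mathcal W \circ G^* \subseteq \mathcal{H}^\alpha(c\,\Gamma M^\alpha, \mathcal Z)$, using $G^* \in \Lip(M, \mathcal Z)$. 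Standard covering-number and chaining bounds for the Hölder ball on the $d^*$-dimensional latent cube then give $\E[S_n] \lesssim n^{-\alpha/d^*}$ for $2\alpha<d^*$, $n^{-1/2}\sqrt{\log n}$ for $2\alpha=d^*$ and $n^{-1/2}$ for $2\alpha>d^*$. Invoking Theorem \ref{equiv_fast} in the reverse direction converts each assembled summand into its $[\cdot]^{1;1/2}$ form, collapsing the three stochastic rates into $n^{-\alpha/2d^*}$, $n^{-1/4}\sqrt{\log n}$ and $n^{-1/4}$ respectively.

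The main obstacle is propagating the $[\cdot]^{1;1/2}$ exponent correctly along the chain: each additive contribution must sit inside its own $\sqrt{\cdot}$-bracket before the next triangle inequality is applied, because $\sqrt{a+b}\le\sqrt{a}+\sqrt{b}$ only yields the claimed separable form when the pieces are square-rooted individually; otherwise the rates degrade in the regime $\V_{\Lip(L,B)}<1$. Closely related is the empirical-process step, where the $d^*$-dimensional entropy of $\mathcal{H}^\alpha(\Gamma,\mathcal Z)$ must give exactly the dichotomy between the dimension-dependent rate and the parametric $n^{-1/2}$ rate, and the coupling cost with $G^*(Z_i)$ must be absorbed into approximation terms already present without producing a term of larger order.
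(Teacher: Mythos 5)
Your proposal follows essentially the same route as the paper: Theorem~\ref{equiv_fast} to pass from $\mathsf W_1$ to $[\V]^{1;1/2}$, the discriminator-approximation bound \eqref{Approximationsfehler}, the ERM property, the dual Hölder estimate $|\E[W(U)-W(V)]|\le\Gamma\,\mathsf W_1(\P^U,\P^V)^\alpha$, and the latent coupling with $Z_i$ reducing the empirical process to a Hölder ball on $\mathcal Z$ bounded via \citet[Theorem~4]{Schreuder2020_2}. The only cosmetic imprecision is the appeal to a ``triangle inequality for $\V_{\mathcal W}$'', which is not literally valid; as in the paper one must first reduce $\V_{\mathcal W}$ to a supremum of differences $\E[W(X)]-\E[W(Y)]$ via \Cref{obere_schranke_allgemein} and the Lipschitz continuity of the logistic terms, after which your decomposition through $\P^{G^*(Z)}$ goes through exactly as described.
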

 The lower bound on the Hölder constant of the discriminator class $\mathcal{W}$ is not overly restrictive when employing neural networks for this function class. Since $c$ is increasing in $\Gamma$, it is advantageous to control the value of $\Gamma$.

It remains to show that there are ReLU networks that satisfy the assumptions of \Cref{hoelder_oracle}. To this end, we build on and extend the approximation results by \cite{Guehring2020}.

To fix the notation we give a general definition of feedforward neural networks.
 Let $d, K, N_1,...,N_K \in \mathbb{N}$. 
  A function $\Phi \colon \mathbb{R}^d \rightarrow \mathbb{R}$ is a neural network with $K$ layers and $N_1+\dots+N_K$ neurons if it results for an argument $x\in \mathbb{R}^d$ from the following scheme:
\begin{equation}
	\begin{aligned}
		& x_0:=x, \\
		& x_k:=\sigma\left(A_{k} x_{k-1}+b_k\right), \quad \text { for } k=1, \ldots K-1, \\
		\Phi(x)=\,& x_K:=A_K  x_{K-1}+b_K ,
	\end{aligned}
\end{equation}
 where for $k \in \{1,...,K\},$ $A_k\in \mathbb{R}^{N_k \times N_{k-1}}$  and $b_k \in \mathbb{R}^{N_k}.$ $\sigma\colon \mathbb{R} \rightarrow \mathbb{R}$ is an element-wise applied arbitrary activation function.
 The number of nonzero weights of all $A_k, b_k$ is given by $\sum_{j=1}^K\left(|A_j|_{\mathcal{\ell}^0}+|b_j|_{\ell^0}\right)$. We focus on the ReLU activation function $\sigma(x) = \max(0,x).$  

\begin{theorem}
	\label[theorem]{approx_res_hoelder}
	Let $ L,B>0$, and $0 <\alpha < 1$. Then there are constants $C^{\prime}, C^{\prime \prime}, C^{\prime \prime\prime} >0$ depending on $d, L, \alpha$ and $B$ with the following properties:	
	For any $\varepsilon \in(0,1 / 2)$ and any $f \in \Lip(L,B)$, there is a ReLU neural network  $\Phi_{\varepsilon}$ with no more than $\lceil C^{\prime} \log_2(\varepsilon^{-\frac{1}{1-\alpha}})\rceil $ layers, $\lceil C^{\prime \prime} \varepsilon^{-\frac{d}{1-\alpha}}\log^2_2(\varepsilon^{-\frac{1}{1-\alpha}})\rceil $ nonzero weights and $\lceil C^{\prime \prime \prime}   \varepsilon^{-\frac{d}{1-\alpha}}(\log_2^2(\varepsilon^{-\frac{1}{1-\alpha}}) \vee \log_2(\varepsilon^{-\frac{1}{1-\alpha}}))\rceil$  neurons such that 
	$$
	\big\|\Phi_{\varepsilon}-f\big\|_{\infty} \leq \varepsilon\qquad\text{and} \qquad \Phi_{\varepsilon}  \in \mathcal{H}^\alpha\big(\max(L,2B)+ \varepsilon\big).
	$$ 
	
\end{theorem}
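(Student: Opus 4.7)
The strategy is to adapt the partition-of-unity ReLU construction of \cite{Guehring2020} to our Lipschitz-to-Hölder setting. The essential observation is that, since the target $f$ has Lipschitz regularity $1$ while the $\mathcal{H}^{\alpha}$ norm measures regularity $\alpha<1$, the gap $1-\alpha>0$ plays the role of the smoothness gap in the Sobolev-norm results of \cite{Guehring2020}; this is exactly what enables a quantitative rate with exponent $d/(1-\alpha)$.

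The network itself I would build as follows. Cover $(0,1)^d$ by axis-aligned cubes $\{Q\}$ of side $h$, construct a partition of unity $\{\phi_Q\}$ as tensor products of ReLU-realizable univariate hat functions, and define the piecewise-constant local approximant $\tilde f := \sum_Q f(x_Q)\,\phi_Q$, with $x_Q$ the center of $Q$. To realize $\tilde f$ as a single ReLU network $\Phi_{\varepsilon}$, I would implement the products $f(x_Q)\,\phi_Q$ (as well as the products inside each $\phi_Q$) via Yarotsky's squaring gadget with precision $\eta$; each such gadget has depth $O(\log(1/\eta))$ and size $O(\log(1/\eta))$, so assembling $O(h^{-d})$ of them in parallel yields a network with the asserted layer, weight, and neuron counts after setting $h,\eta\asymp\varepsilon^{1/(1-\alpha)}$.

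For the $L^{\infty}$ control, $\|\tilde f-f\|_{\infty}\le Lh$ because $\sum_Q\phi_Q\equiv 1$ and $|f-f(x_Q)|\le Lh$ on $\operatorname{supp}\phi_Q$, and the gadgets contribute an extra $O(\eta)$; both are well below the target $\varepsilon$. The Hölder-seminorm control of $\Phi_{\varepsilon}-f$ splits into two regimes: for $|x-y|>h$ one uses the uniform bound to get $|(\Phi_{\varepsilon}-f)(x)-(\Phi_{\varepsilon}-f)(y)|/|x-y|^{\alpha}\le 2\|\Phi_{\varepsilon}-f\|_{\infty}h^{-\alpha}=O(Lh^{1-\alpha})$; for $|x-y|\le h$ only $O(1)$ neighboring $\phi_Q$'s are active, and the telescoping $f(x_Q)-f(x_{Q'})=O(Lh)$ cancels the $O(1/h)$ Lipschitz constant of each $\phi_Q$, so that $\Phi_{\varepsilon}$ is locally Lipschitz with constant $O(L)$ and the quotient is again $O(Lh^{1-\alpha})$. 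Combined with the interpolation bound $\|f\|_{\mathcal{H}^{\alpha}}\le\max(L,2B)$ (obtained from $|f(x)-f(y)|\le\min(L|x-y|,2B)$), the triangle inequality in $\mathcal{H}^{\alpha}$ then gives $\|\Phi_{\varepsilon}\|_{\mathcal{H}^{\alpha}}\le\max(L,2B)+\varepsilon$ after calibrating the constants in the choice of $h$.

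The hard part is the tight local-Lipschitz cancellation in the near regime: the hat-function derivatives are of order $1/h$ and, absent the telescoping of the $f(x_Q)$'s, the Lipschitz constant of $\tilde f$ would blow up as $h\to 0$. Making this cancellation survive the Yarotsky approximation forces $\eta$ to scale like $h^{1-\alpha}$ (rather than $h$), and this is what ultimately produces the additive $+\varepsilon$ in the Hölder bound rather than a multiplicative blow-up, while simultaneously fixing the exponent $d/(1-\alpha)$ in the parameter counts.
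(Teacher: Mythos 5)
Your proposal is correct and follows essentially the same route as the paper: a ReLU-realized partition of unity with local constant coefficients, separate sup-norm and Lipschitz-norm error bounds for the localized pieces, and interpolation between the two to obtain the $\mathcal{H}^{\alpha}$ rate $h^{1-\alpha}$ --- the paper invokes abstract interpolation-space theory (together with a Stein extension and the Bramble--Hilbert lemma, using averaged rather than pointwise coefficients) where you carry out the interpolation by hand via the two regimes $|x-y|\le h$ and $|x-y|>h$. One small slip in your closing remark: the Hölder control forces the gadget precision $\eta\lesssim \varepsilon h^{\alpha}\asymp h$ (consistent with your earlier choice $\eta\asymp\varepsilon^{1/(1-\alpha)}$), not $\eta\asymp h^{1-\alpha}$, which is the weaker requirement coming from the $L^{\infty}$ bound alone; since the network complexity depends only logarithmically on $\eta$, this does not affect the stated layer, weight, and neuron counts.
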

Since there are many different reasons why controlling the Hölder constant of neural networks is interesting (with stability probably being the most prominent one), \Cref{approx_res_hoelder} is of interest on its own.
Combining \Cref{hoelder_oracle} and \Cref{approx_res_hoelder} with a standard approximation result for the generator approximation error, such as \citet[Theorem 1]{Yarotsky2017}, leads to a rate of convergence. The networks in $\mathcal{G}$ approximating the function $G^*\in \operatorname{Lip}(M, \mathcal{Z})$ are only required to be measurable without any additional smoothness assumption.

\begin{corollary}\label{alles_eingesetzt}

	For $0 < \alpha < 1,$ $\Gamma> 5,$ $M>0,$ $d^*> 2\alpha$ and $n > 2^{\frac{2d^*}{\alpha}}$ choose  $\mathcal{G}$ as the set of ReLU networks with at most $\lceil c \cdot \log(n)\rceil$ layers, $\lceil c\cdot  n\log(n)\rceil $ nonzero weights and $\lceil c \cdot   n\log(n)\rceil $ neurons and $\mathcal{W}'$ as the set of ReLU networks with at most  $ \lceil c \cdot \log(n)\rceil$ layers, $\lceil c \cdot n^{\frac{\alpha}{2(1-\alpha)}} \log^2(n)\rceil$ nonzero weights and $\lceil c \cdot n^{\frac{\alpha}{2(1-\alpha)}} \log^2(n)\rceil$  neurons, where $c$ is a constant depending on $d, d^*, \Gamma, M$ and $\alpha$. Then the empirical risk minimizer  $\hat{G}_n $ from \eqref{G_hat_standard} with $\mathcal{W} = \mathcal W'\cap \mathcal{H}^{\alpha}(\Gamma)$ satisfies
	\begin{align*}
		\E\left[\mathrm{W}_1\left(\P^*, \P^{\hat{G}_n(Z)}\right)\right] \leq c \cdot  n^{-\alpha / 2 d^*}
  + c 
  \big[\inf_{G^*\in \Lip(M, \mathcal{Z})}\mathsf{W}_1(\mathbb{P^*}, \P^{G^*(Z)})^{\alpha}\big]^{1;1/2} .
	\end{align*}
	\end{corollary}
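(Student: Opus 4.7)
The strategy is to feed the prescribed network classes into \Cref{hoelder_oracle}, control the generator and discriminator approximation errors by explicit approximation theorems, and match their contributions to the stochastic rate $n^{-\alpha/2d^*}$, which equals the target since $d^*>2\alpha$. The Wasserstein-approximation term $[\mathsf{W}_1(\P^*,\P^{G^*(Z)})^\alpha]^{1;1/2}$ is left as the infimum over $G^*\in\Lip(M,\mathcal Z)$ and constitutes the second summand in the claim.

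I would begin by applying \Cref{hoelder_oracle} after fixing constants $L>2$ and $B>\log 3$ with $\max(L,2B)+\tfrac12\le\Gamma$, which is possible since $\Gamma>5$. The required inclusion $\mathcal W\subseteq\mathcal H^\alpha(\Gamma)$ holds by the very definition $\mathcal W=\mathcal W'\cap\mathcal H^\alpha(\Gamma)$, and the condition $d^*>2\alpha$ puts us in the regime where the stochastic term of \Cref{hoelder_oracle} equals $c\,n^{-\alpha/2d^*}$.

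Next, to bound the discriminator term $[\inf_{W\in\mathcal W}\sup_{W^*\in\Lip(L,B)}\|W-W^*\|_\infty]^{1;1/2}$, I would invoke \Cref{approx_res_hoelder}: for any $\varepsilon_W\in(0,\tfrac12)$ and any $W^*\in\Lip(L,B)$ there exists a ReLU network $\Phi_{\varepsilon_W}$ with $\|\Phi_{\varepsilon_W}-W^*\|_\infty\le\varepsilon_W$ and $\Phi_{\varepsilon_W}\in\mathcal H^\alpha(\max(L,2B)+\varepsilon_W)\subseteq\mathcal H^\alpha(\Gamma)$. The architectural bound in \Cref{approx_res_hoelder} is uniform in $W^*$, so choosing $\varepsilon_W$ as a power of $n$ with $\sqrt{\varepsilon_W}\lesssim n^{-\alpha/2d^*}$ and with $\varepsilon_W^{-d/(1-\alpha)}\log^2(\varepsilon_W^{-1/(1-\alpha)})$ fitting inside the budget defining $\mathcal W'$ guarantees $\Phi_{\varepsilon_W}\in\mathcal W$ and yields the desired $n^{-\alpha/2d^*}$ contribution. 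For the generator term $[\inf_{G\in\mathcal G}\|G^*-G\|_\infty^\alpha]^{1;1/2}$ I would apply a standard ReLU approximation theorem for Lipschitz functions on $\mathcal Z$, as referenced in the statement, for instance \citet[Theorem 1]{Yarotsky2017}, componentwise to $G^*\in\Lip(M,\mathcal Z)$. For a target accuracy $\eta$ this gives a ReLU network of depth $O(\log\eta^{-1})$ with $O(\eta^{-d^*}\log\eta^{-1})$ weights and neurons; picking $\eta$ so that $\sqrt{\eta^{\alpha}}\lesssim n^{-\alpha/2d^*}$ and matching the resulting architecture to $\mathcal G$ absorbs this error into the stochastic rate as well.

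Collecting the stochastic term and the two absorbed approximation errors, and then taking the infimum over $G^*\in\Lip(M,\mathcal Z)$ on the remaining Wasserstein term, delivers the statement. The main technical obstacle I expect is the joint dimensioning in the discriminator step: the network produced by \Cref{approx_res_hoelder} must simultaneously (i) stay in the Hölder ball $\mathcal H^\alpha(\Gamma)$, which forces $\varepsilon_W<\Gamma-\max(L,2B)$ and hence a careful a priori choice of $L,B$, and (ii) satisfy the polynomial-in-$n$ size bound of $\mathcal W'$, which pins down $\varepsilon_W$ through the exponent $d/(1-\alpha)$. Any mismatch in these two constraints would either push $\Phi_{\varepsilon_W}$ outside $\mathcal W$ or leak a rate slower than $n^{-\alpha/2d^*}$ into the final bound, so this is where most of the book-keeping has to be done.
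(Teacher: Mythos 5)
Your plan is exactly the paper's proof: the authors dispose of this corollary with the single remark that it ``is a straightforward combination of \Cref{hoelder_oracle} and \Cref{approx_res_hoelder}'' (plus Yarotsky for the generator term), which is precisely the decomposition and balancing you describe. If anything, your explicit worry about reconciling $\sqrt{\varepsilon_W}\lesssim n^{-\alpha/2d^*}$ with the $n^{\alpha/(2(1-\alpha))}$ weight budget of $\mathcal W'$ is more careful than the paper itself, which never carries out that bookkeeping.
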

From \Cref{approx_res_hoelder} we know that the set $\mathcal W'\cap \mathcal{H}^{\alpha}(\Gamma)$ of ReLU networks of finite width and depth is nonempty. In practice, this corresponds to a discriminator network with a controlled Hölder constant. On a bounded domain, any Lipschitz function is a Hölder function.
\Cref{alles_eingesetzt} shows that Vanilla GANs with a Hölder regular discriminator class are theoretically advantageous. The Hölder parameter $\alpha$ can be chosen arbitrarily close to one. On the one hand this reveals why a Lipschitz regularization as implemented for Wasserstein GANs also improves the Vanilla GAN. An empirical confirmation can be found in \cite{zhou2019} and \Cref{sec:simulation}. On the other hand the corollary then requires more neurons in the discriminator than in the generator class which coincides with common practice. 

 The width of the generator networks in \Cref{alles_eingesetzt} can be improved by replacing $G^* \in \operatorname{Lip}(M, \mathcal{Z})$ by with $G^* \in \operatorname{C}^{n-1}(\mathcal{Z}), n \in \mathbb{N},$ whose $(n-1)$-th derivative is Lipschitz continuous with Lipschitz constant $M.$ Once more, this results in a trade-off, as  $\big[\mathsf{W}_1(\mathbb{P^*}, \P^{G^*(Z)})^{\alpha}\big]^{1;1/2} $ increases when $G^*$ is selected from a smaller set of functions. 

\section{Wasserstein GAN}
\label{sec:wasserstein_gan}
The same analysis can be applied to Wasserstein-type GANs. The constrained on the Hölder constant can be weakened, as we do not need \Cref{equiv_fast}. Note that this does not impact the rate, but the constant. Define the Wasserstein-type distance with discriminator class $\mathcal{W}$ as 
\begin{equation*}
    \mathsf{W}_{\mathcal{W}}(\mathbb{P}, \mathbb{Q}) = \sup_{W \in \mathcal{W}} \mathbb{E}_{\substack{X \sim \mathbb{P} \\ Y \sim \mathbb{Q}}}[W(X) - W(Y)]. 
\end{equation*}
The following theorem shows that by using Hölder continuous ReLU networks as the discriminator class, Wasserstein-type GANs can avoid the curse of dimensionality. Furthermore, this avoids the difficulties arising from the Lipschitz assumption of the neural network, as pointed out by \cite{huang2021}. 

\begin{theorem}\label{thm:wasserstein_gan} For $0 < \alpha < 1,$ $\Gamma > 1, M > 0$ and $d > 2\alpha$ and $n > 2^{\frac{d}{\alpha}}$ choose $\mathcal{G}$ as the set of ReLU networks with at  most $\lceil c\cdot \log(n)\rceil $ layers, $\lceil c\cdot n\log(n)\rceil $ nonzero  weights and $\lceil c\cdot n\log(n)\rceil $ neurons and $\mathcal{W}'$ as the set of ReLU networks with at most  $\lceil c\cdot \log(n)\rceil $ layers, $\lceil c \cdot n^{\frac{\alpha}{(1-\alpha)}} \log^2(n)\rceil $ nonzero weights and $\lceil c \cdot n^{\frac{\alpha}{(1-\alpha)}} \log^2(n)\rceil $ neurons, where $c$ is a constant depending on $d, d^*, \Gamma, M$ and $\alpha$. 
    The empirical risk minimizer with $\mathcal{W} = \mathcal W'\cap \mathcal{H}^{\alpha}(\Gamma)$  \begin{equation*}
        \hat{G}_n \in \underset{G \in \mathcal{G}}{\operatorname{argmin}}\;  \mathsf{W}_{\mathcal{W}}(\mathbb{P}_n, \mathbb{P}^{G(Z)})
    \end{equation*}
satisfies
    \begin{align*}
         \mathbb{E}[\mathsf{W}_1(\mathbb{P}^{*}, \mathbb{P}^{\hat{G}_n(Z)})] \leq c \cdot n^{-\frac{\alpha}{d^*}} + \inf_{G^* \in \operatorname{Lip}(M, \mathcal{Z})}\mathsf{W}_1(\mathbb{P}^{*}, \mathbb{P}^{G^*(Z)}) .
    \end{align*}

\end{theorem}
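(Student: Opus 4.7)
The proof mirrors that of \Cref{alles_eingesetzt} almost line for line, with one simplification: because the Wasserstein-type GAN objective $\mathsf{W}_{\mathcal{W}}$ is itself an IPM on probability measures, the Vanilla--Wasserstein equivalence \Cref{equiv_fast} is not needed, and the $[\cdot]^{1;1/2}$ amplification that permeates the Vanilla oracle bounds disappears. This is precisely what allows all exponents to roughly double compared to \Cref{alles_eingesetzt} and yields the rate $n^{-\alpha/d^*}$ instead of $n^{-\alpha/2d^*}$.

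The starting point is an oracle inequality analogous to \Cref{dreiecksugls_diskriminierer}: optimality of $\hat{G}_n$ for the empirical $\mathsf{W}_{\mathcal{W}}$-objective together with the triangle inequality for the pseudo-metric $\mathsf{W}_{\mathcal{W}}$ gives
\begin{equation*}
  \mathsf{W}_{\mathcal{W}}(\P^*, \P^{\hat{G}_n(Z)}) \leq \inf_{G \in \mathcal{G}} \mathsf{W}_{\mathcal{W}}(\P^*, \P^{G(Z)}) + 2 \sup_{W \in \mathcal{W}} (\P_n - \P^*) W.
\end{equation*}
To pass from $\mathsf{W}_{\mathcal{W}}$ to $\mathsf{W}_1$ on the left I would reuse the approximation trick from \Cref{dreiecksugls_diskriminierer}: every $W \in \Lip(1, B)$ with $B = d^{1/p}$ is approximated in supremum norm by some $W' \in \mathcal{W}$, introducing the extra term $2 \inf_{W' \in \mathcal{W}} \sup_{W \in \Lip(1, B)} \|W - W'\|_\infty$. \Cref{approx_res_hoelder} applied to the prescribed class $\mathcal{W}'$ bounds this infimum by $O(n^{-\alpha/d})$ and guarantees that the approximating networks lie in $\mathcal{H}^{\alpha}(\Gamma)$, hence in $\mathcal{W}$.

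For the two remaining terms I would proceed as follows. Since $\mathcal{W} \subseteq \mathcal{H}^{\alpha}(\Gamma)$, Jensen's inequality gives the H\"older--IPM estimate $\sup_{W \in \mathcal{W}}(\P_n - \P^*) W \leq \Gamma\, \mathsf{W}_1(\P_n, \P^*)^{\alpha}$. In Case~1, Dudley's classical bound $\E[\mathsf{W}_1(\P_n, \P^*)] \lesssim n^{-1/d}$ plus a further Jensen step produces $O(n^{-\alpha/d})$. In Case~2, I would invoke the Schreuder-style coupling of \cite{Schreuder2020_2}: for $G^* \in \Lip(M, \mathcal{Z})$ one couples the data with auxiliary latent samples so that $(X_i, G^*(Z_i))$ realises the optimal coupling of $\P^*$ and $\P^{G^*(Z)}$; the triangle inequality then gives $\E[\mathsf{W}_1(\P_n, \P^*)] \leq \mathsf{W}_1(\P^*, \P^{G^*(Z)}) + M\,\E[\mathsf{W}_1(\mathbb{U}_n, \mathbb{U})] \lesssim \mathsf{W}_1(\P^*, \P^{G^*(Z)}) + M n^{-1/d^*}$, and raising to the $\alpha$-power delivers the stochastic rate $n^{-\alpha/d^*}$. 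The generator-approximation term $\inf_{G \in \mathcal{G}} \mathsf{W}_{\mathcal{W}}(\P^*, \P^{G(Z)})$ is split through $\P^{G^*(Z)}$, and a Yarotsky-type ReLU approximation of $G^*$ afforded by the generator complexity $\sim n^{\alpha}\log n$ yields $\inf_{G \in \mathcal{G}} \|G^* - G\|_\infty \lesssim n^{-\alpha/d^*}$ in Case~2 (and a universal density statement in Case~1). Collecting everything and taking the minimum over the two choices of $G^*$ produces the inequality displayed in the statement.

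The main obstacle I anticipate is controlling the lossy H\"older--IPM step $\mathsf{W}_{\mathcal{W}} \leq \Gamma\, \mathsf{W}_1^\alpha$: this step is unavoidable on the stochastic term, and it must not be applied a second time on the generator-approximation side, since doing so would replace the linear $\mathsf{W}_1(\P^*, \P^{G^*(Z)})$-contribution in the statement by the weaker $\mathsf{W}_1(\P^*, \P^{G^*(Z)})^\alpha$ and degrade the final rate to $n^{-\alpha^2/d^*}$. As in the proof of \Cref{hoelder_oracle}, the remedy is to decompose via $\P^{G^*(Z)}$ at the level of $\mathsf{W}_1$ (where $\mathsf{W}_1(\P^{G^*(Z)}, \P^{G(Z)}) \leq \|G^* - G\|_\infty$ holds without any $\alpha$-power) rather than at the level of $\mathsf{W}_{\mathcal{W}}$, and to apply the Jensen step only once. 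Once this is in place, calibrating the complexities of $\mathcal{G}$ and $\mathcal{W}'$ becomes the same balancing exercise as in \Cref{alles_eingesetzt}, with every exponent doubled because the $[\cdot]^{1;1/2}$ factor is absent.
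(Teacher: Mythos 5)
Your overall route is the same as the paper's: a triangle-inequality/ERM oracle decomposition for the IPM $\mathsf{W}_{\mathcal{W}}$, passage to $\mathsf{W}_1$ via the discriminator approximation error controlled by \Cref{approx_res_hoelder}, the H\"older--IPM/Jensen bound $\mathsf{W}_{\mathcal{H}^{\alpha}(\Gamma)}\leq \Gamma\,\mathsf{W}_1^{\alpha}$ on the stochastic term, the coupling through $G^*\in\Lip(M,\mathcal{Z})$ to trade the ambient for the latent dimension, and a Yarotsky-type bound for the generator. The paper's proof is a four-line chain that defers exactly to \Cref{hoelder_oracle} and \Cref{rate_schreuder} for these ingredients, so whether you apply the ERM step before or after splitting off $\mathsf{W}_1(\P^*,\P_n)$ is immaterial; you have reconstructed the intended argument.

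The one step that would fail is your proposed ``remedy'' for keeping the generator-approximation term linear in $\|G-G^*\|_{\infty}$. The quantity the ERM actually controls is $\mathsf{W}_{\mathcal{W}}(\P_n,\P^{G(Z)})$ with $\mathcal{W}\subseteq\mathcal{H}^{\alpha}(\Gamma)$, and the Lipschitz constants of the ReLU networks in $\mathcal{W}$ are \emph{not} uniformly bounded (only their $\alpha$-H\"older constants are). Hence the only available comparison is $\sup_{W\in\mathcal{W}}\E[W(G^*(Z))-W(G(Z))]\leq\Gamma\|G^*-G\|_{\infty}^{\alpha}$. Decomposing ``at the level of $\mathsf{W}_1$'' before the Jensen step does not avoid this: you then bound $\mathsf{W}_{\mathcal{W}}(\P_n,\P^{G(Z)})\leq\Gamma\big(\mathsf{W}_1(\P_n,\P^{G^*(Z)})+\|G^*-G\|_{\infty}\big)^{\alpha}$ and subadditivity of $x\mapsto x^{\alpha}$ restores the power $\alpha$ on every summand. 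So with the prescribed generator budget ($\sim n^{\alpha}\log n$ weights, i.e.\ $\|G-G^*\|_{\infty}\lesssim n^{-\alpha/d^*}$) the generator contribution is $n^{-\alpha^2/d^*}$, not $n^{-\alpha/d^*}$; a linear contribution would require discriminators with controlled Lipschitz (not merely H\"older) constants, or a larger generator class with $\sim n\log n$ weights. To be fair, the paper's own proof inherits exactly this issue by citing \Cref{hoelder_oracle}, where the term already appears as $\|G-G^*\|_{\infty}^{\alpha}$, so you have surfaced a genuine soft spot of the argument rather than introduced a new one --- but your fix, as stated, does not close it.
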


Compared to \Cref{alles_eingesetzt} the rate improves to $n^{-\alpha/d^*}$ for any $\alpha<1.$  The number of observations necessary for the theorem to hold, the size of the discriminator network and the lower bound for $\Gamma$ decrease. Note that $\Gamma$ does not effect the rate, but the constants. In case there exists a $G^*$ such that $\mathsf{W}_1(\mathbb{P}^{*}, \mathbb{P}^{G^*(Z)})=0,$ this upper bound coincides with the lower bound in \citet[Theorem 1]{tang2022} up to an arbitrary small polynomial factor.

Our rate does not depend exponentially on the number of layers like the results of \cite{Liang2017}, \cite{huang2021} and we use non-smooth simple ReLU networks compared to smooth ReQU networks in \cite{stephanovitch2023} or group sort networks in \cite{Biau2021_Wasserstein}.

\section{Numerical illustration}\label{sec:simulation}
 The results in \Cref{nn_vanilla} and \Cref{sec:wasserstein_gan} were obtained under the assumption that the discriminator class consists of Lipschitz networks. In the context of image generation, these findings align with the results of \cite{zhou2019}, \cite{miyato2018}, \cite{kodali2017}, and \cite{fedus2017}. Furthermore, \cite{fedus2017} demonstrated in a two-dimensional experiment that a Vanilla GAN with a gradient penalty (and, consequently, a lower Lipschitz constant) can be effective in scenarios where the measures $\mathbb{P}^*$ and $\mathbb{P}^{\hat{G}(Z)}$ are singular. 
 
This section presents a transparent and accessible example that confirms our theoretical findings and especially demonstrates how imposing a Lipschitz constant on the discriminator stabilizes the Vanilla GAN. Additionally, it demonstrates the capacity of the Vanilla GAN to detect a lower dimensional manifold. In order to monitor rates of convergence, it is necessary to at least approximately evaluate $\mathsf{W}_1(\mathbb{P}^*, \mathbb{P}^{\hat{G}_n(Z)})$. Therefore, we study the numerical performance of the Vanilla GAN in a simulation setting, where the true data distribution is known by construction.

In this work, the Wasserstein distance is employed as the metric for measuring the rate of convergence. In practice, the Wasserstein distance is only computable in the one-dimensional case. To investigate multivariate distributions, we approximated the Wasserstein distance by averaging the Wasserstein distance on the marginals.

In order to model the distribution $\mathbb{P}^*$ of a lower dimensional manifold, we employed a one-dimensional uniform distribution on the graph of the function $x\mapsto\sin(4 \pi x)$ on the diagonal of the two-dimensional unit cube, resulting in a three-dimensional distribution. For the latent distribution, we used the one-dimensional normal distribution. Consequently, the dimensions of the lower dimensional manifold and the latent space are identical.

For the discriminator, we used a neural network with four layers of width $128$ concatenated to a sigmoid function. For the generator, we used a neural network with three layers of width $64.$ In order to preserve as much alignment as possible with the theoretical result, we used plain ReLU activations.
Each training consisted of $30000$ training iterations. We used the Adam optimizer \citep{kingma2014}  with parameters $\beta_1 = 0.9$ and $\beta_2 =0.999$, and a learning rate of $\gamma =  0.0005$. When the number of observations exceeded $512,$ we used minibatches of that size in each iteration. We updated generator and discriminator alternating. 

Three snapshots of the training of the Vanilla GAN for samples sizes $n=100$ and $n=1000$ are given in \Cref{fig:normal_to_sine_1000}, respectively. 
The difference between \Cref{fig:normal_to_sine_100_30000} and \Cref{fig:normal_to_sine_1000_30000} is solely due to the number of observations. The observations in \Cref{fig:normal_to_sine_100_0} to \Cref{fig:normal_to_sine_100_30000} cover the manifold to a lesser extent than the observations in \Cref{fig:normal_to_sine_1000_0} to \Cref{fig:normal_to_sine_1000_30000}. This corresponds to a larger stochastic error.

\begin{figure}[t]
    \centering
    \begin{subfigure}[b]{0.32\textwidth}
        \centering
        \includegraphics[trim= {3.5cm 1cm 2cm 2cm},clip,width=\textwidth]{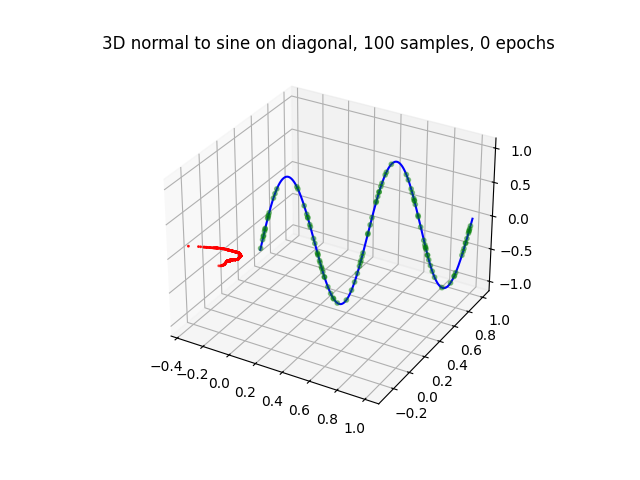}
        \caption{Step $0,$ $100$ obs.}
        \label{fig:normal_to_sine_100_0}
    \end{subfigure}
    \hfill
    \begin{subfigure}[b]{0.32\textwidth}
        \centering
        \includegraphics[trim={3.5cm 1cm 2cm 2cm},clip,width=\textwidth]{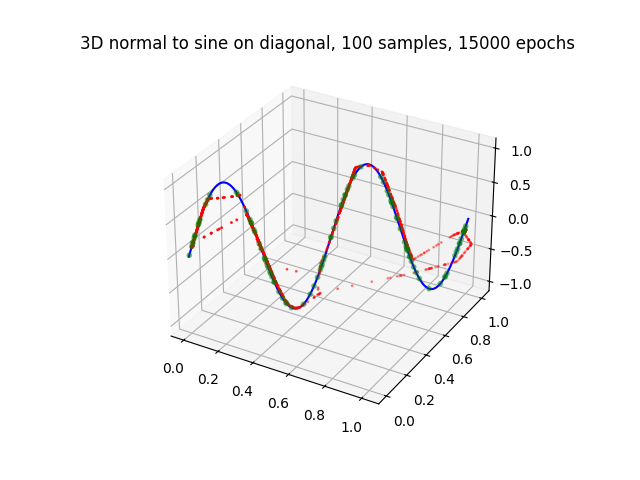}
        \caption{Step $15000,$ $100$ obs.}
        \label{fig:normal_to_sine_100_15000}
    \end{subfigure}
    \hfill
    \begin{subfigure}[b]{0.32\textwidth}
        \centering
        \includegraphics[trim={3.5cm 1cm 2cm 2cm},clip,width=\textwidth]{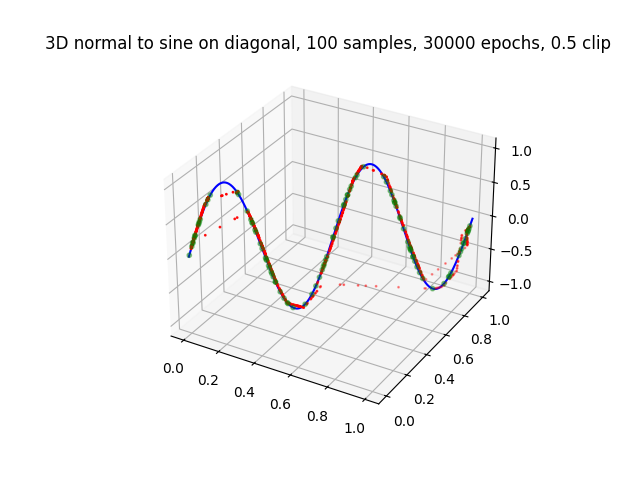}
        \caption{Step $30000,$ $100$ obs.}
        \label{fig:normal_to_sine_100_30000}
    \end{subfigure}
\vspace{1em}
    \begin{subfigure}[b]{0.32\textwidth}
        \centering
        \includegraphics[trim= {3.5cm 1cm 2cm 2cm},clip,width=\textwidth]{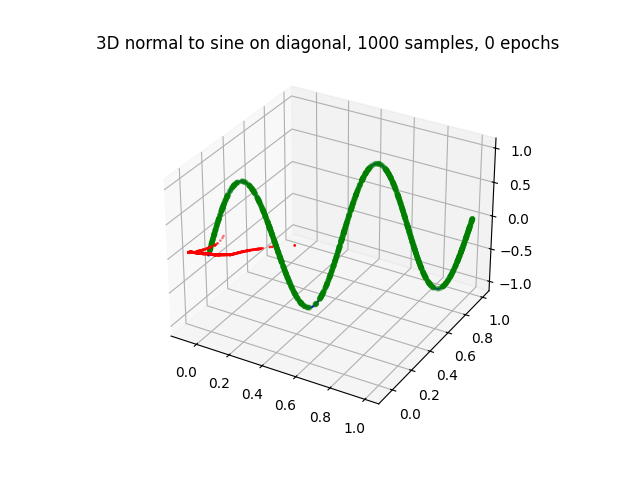}
        \caption{Step $0,$ $1000$ obs.}
        \label{fig:normal_to_sine_1000_0}
    \end{subfigure}
    \hfill
    \begin{subfigure}[b]{0.32\textwidth}
        \centering
        \includegraphics[trim={3.5cm 1cm 2cm 2cm},clip,width=\textwidth]{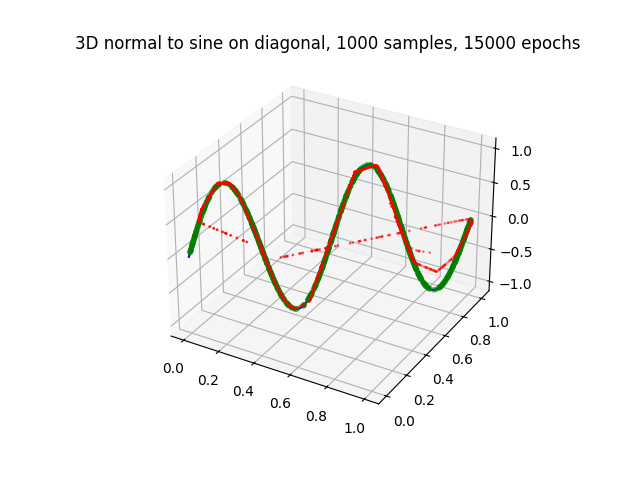}
        \caption{Step $15000,$ $1000$ obs.}
        \label{fig:normal_to_sine_1000_15000}
    \end{subfigure}
    \hfill
    \begin{subfigure}[b]{0.32\textwidth}
        \centering
        \includegraphics[trim={3.5cm 1cm 2cm 2cm},clip,width=\textwidth]{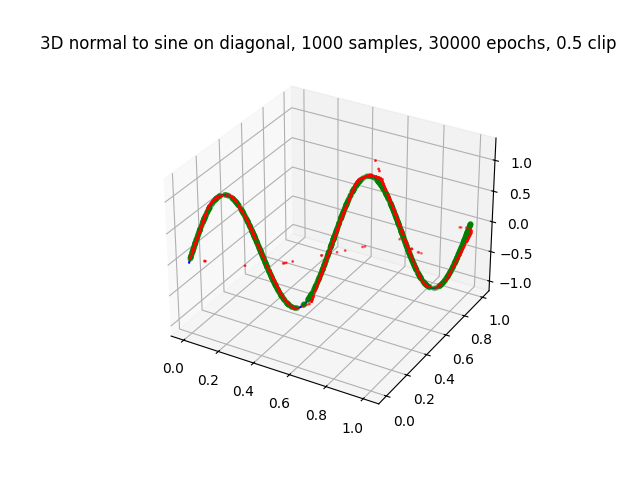}
        \caption{Step $30000,$ $1000$ obs.}
        \label{fig:normal_to_sine_1000_30000}
    \end{subfigure}
    \caption{Training of Vanilla GAN with weight clip using $100$ observations (first row) or $1000$ observations (second row). Red dots show $1000$ generated samples, green dots show the observations used for the training. The blue line is the one-dimensional manifold. }
  \label{fig:normal_to_sine_1000}
\end{figure}

To maintain the Lipschitz constant within a controllable range, we implemented  the  simple weight clipping mechanism of \cite{Arjovsky}, limiting each weight to a value of $0.5$. 
It is important to note that the network used in the unclipped case is also Lipschitz continuous, however, we do not have control over this Lipschitz constant. Given the width and depth parameters used in this study, it is evident that the Lipschitz constant of the clipped network remains relatively high and is considerably distinct from the theoretical value typically employed in Wasserstein GANs. However, a smaller Lipschitz constant requires an adjustment to the learning rate. Otherwise the weights are likely to remain at their maximum absolute value. This affects the experiment in several other ways. To ensure a fair and accurate comparison between the clipped and unclipped scenarios, we kept the learning rate consistent. 

The results are summarized in \Cref{fig:convergence_normal_to_sine}.
\begin{figure}[t]
    \centering
    \includegraphics[trim={0 0cm 0 1cm},clip, width=0.8\textwidth]{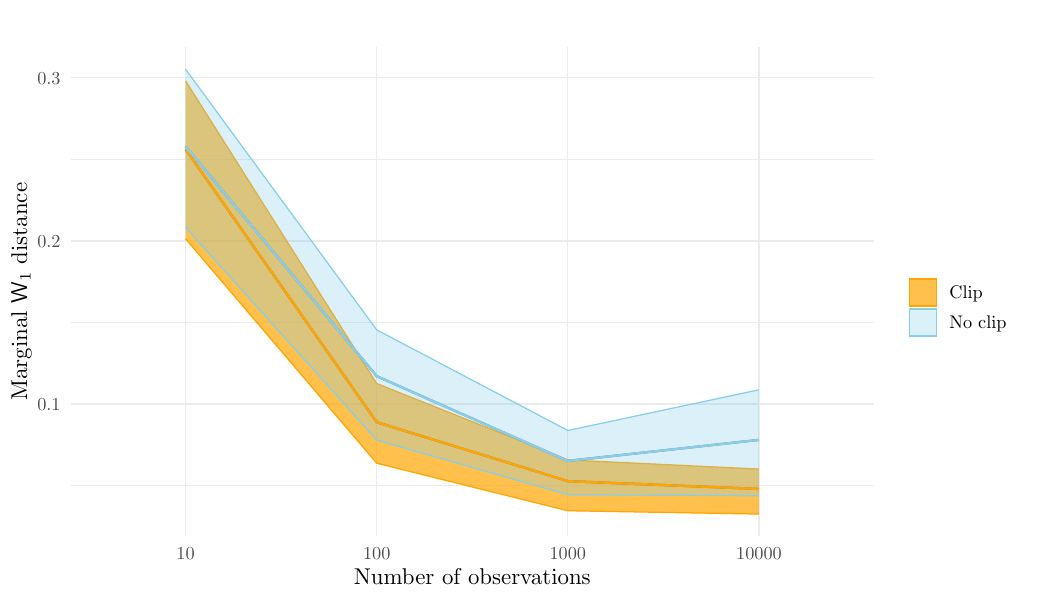}
    \caption{Marginal $\mathsf{W}_1$ distance depending on number of observations. Thick line shows the average over $50$ independent runs, ribbons show the first to third quartile.}
    \label{fig:convergence_normal_to_sine}
\end{figure}
As predicted by our theory, the averaged marginal Wasserstein distance between the generated distribution and the true data distribution decays approximately as $n^{-1/2}$ for $n \in \{10,100, 1000\}$. While we see a clear improvement with  $10000$ observations, the additional gain is limited by the optimization error, since the manifold is already densely covered for $1000$ observations.

It is apparent that controlling the Lipschitz constant overall stabilizes the training process, resulting in less variability in the results. In certain cases, the GAN without weight clipping can achieve the same level of effectiveness. This does not negate the outcome. Since the discriminator without clipped weights is still Lipschitz continuous (with a large Lipschitz constant), the theoretical limitations of Vanilla GANs without restricted discriminator classes do not directly translate to practice. This, combined with the finite nature of the implementations, ultimately resulted in the empirical success of these models. 
The variability between different simulation runs is described by the first to the third quartile in  \Cref{fig:convergence_normal_to_sine} which again confirms a more stable behavior of the clipped algorithm.

\begin{figure}[t]
    \centering
    \includegraphics[trim={0 0cm 0 1cm},clip, width=0.8\textwidth]{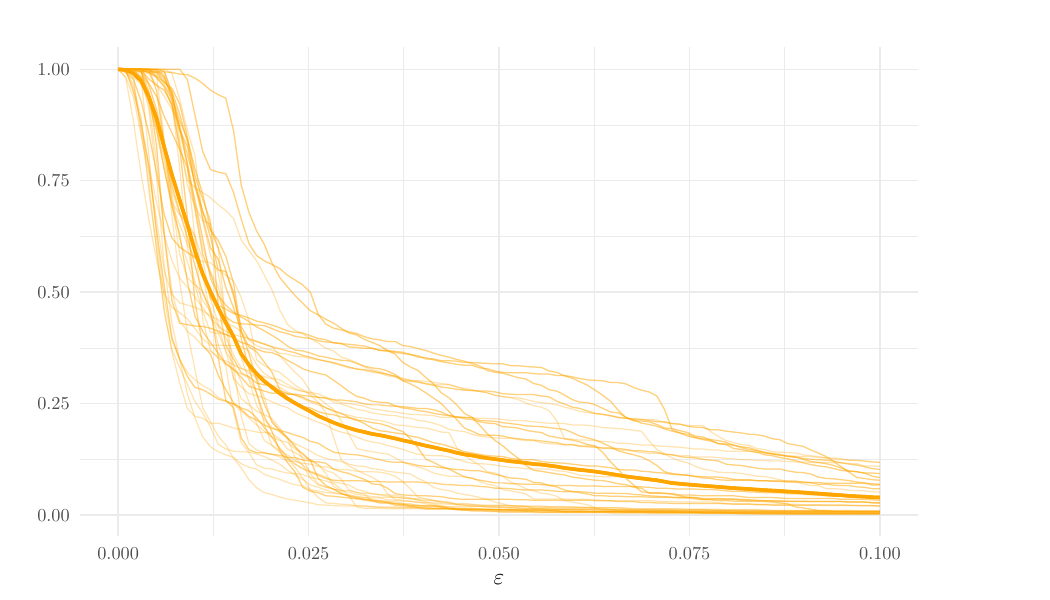}
    \caption{Percentage of generated samples with euclidean distance to manifold greater than $\varepsilon$ using $1000$ observations and a discriminator with $0.5$ clip. Transparent lines show the individual runs, thick line shows the average over $50$ runs.}
    \label{fig:deviations_normal_to_sine}
\end{figure}

\Cref{fig:deviations_normal_to_sine}  demonstrates the high degree of precision with which the generated samples concentrate on the low-dimensional support of the true data distribution. Our experiments show that this concentration holds true across all sample sizes and can be observed in both the clipped and unclipped case. However, a high concentration does not necessarily indicate that the generated distribution is an accurate imitation of the unknown distribution with respect to the Wasserstein distance. Consequently, \Cref{fig:deviations_normal_to_sine} is only informative in conjunction with \Cref{fig:convergence_normal_to_sine}.

Additionally, we investigated the use of a space $\mathbb{U}$ of the same dimension as the ambient space. Our observations indicated that the Vanilla GAN is still capable of identifying the lower dimensional subspace with reasonable efficacy.

\section{Discussion and limitations}
Our analysis demonstrates that GANs originally build on too sensitive distribution distances such as the Jensen-Shannon distance can be improved via a Lipschitz constraint in the discriminator class. This insight might be also applicable to other GANs, e.g. $f$-GANs by \cite{Nowozin2016}, which rely on a divgerence which cannot disciminate between different singular distributions and is thus not suitable for a dimension reduction setting. Overall we conclude that the choice of the discriminator class is much more important for the data generation capabilities than choice of the loss function which is typically dictated by some distance. Moreover, our analysis of the approximation error of the discriminator is not limited to the Vanilla GANs, but also applicable to optimal transport based GANs as demonstrated for the Wasserstein GAN.

There are several potential avenues for further development of the results presented in this paper.
This includes especially the mentioned potential implications to other types of GANs. While our analysis was limited to ReLU feedforward networks, one advancement of research on neural networks is the use of more sophisticated networks whose statistical analysis is not yet settled. In context of Wasserstein GANs, see for example \cite{Radford2015}.

Furthermore, inclusion of a bound on the Lipschitz constant (and not only the Hölder constant) would allows for direct application of \Cref{dreiecksugls_diskriminierer}, thereby eliminating the need to include the parameter $\alpha$ and thus improve the rates.
Additionally it would be interesting, whether there are conditions that allow for a faster rate of convergence for the Vanilla GAN in some cases (excluding scenarios as in \Cref{bsp_lin}).

The experiments also demonstrated that the GAN is capable of detecting data from a lower dimensional manifold if the latent space is of the same dimension as the ambient space. The proof of \Cref{dim_red} is contingent upon the dimension of the latent space. If the dimension of the latent space is chosen to be too small, then $\inf_{G^* \in \operatorname{Lip}(M, \mathcal{Z})} \mathsf{W}_1(\mathbb{P}^*, \mathbb{P}^{G^*(Z)})$ will be large. If the dimension of the latent space is chosen too large, this does not result in a deterioration of $\inf_{G^* \in \operatorname{Lip}(M, \mathcal{Z})} \mathsf{W}_1(\mathbb{P}^*, \mathbb{P}^{G^*(Z)})$ compared to the optimal choice. However, the rate that was proven then still depends on the higher latent dimension. Therefore rates that are adaptive to the unknown intrinsic dimension, potentially benefiting from results like \cite{Berenfeld2021}, would be interesting.

Bounds in other distances suitable to the dimension reduction setting are also of high interest. For example, the Wasserstein-$2$ metric is slightly stronger than the Wasserstein-$1$ metric (in the sense that $ \mathsf{W}_1 \leq \mathsf{W}_2$, see \cite[Remark 6.6]{Villani2008}). Our proofs rely on the duality of the Wasserstein-$1$ distance, hence they cannot be translated directly to the Wasserstein-$2$ distance. 

Finally, the objective of this study was to examine statistical perspectives, and thus, the optimization problem was not addressed. In the proofs, we employ the global minimizer and maximizer. Since we face a non-convex optimization problem gradient based methods may suffer from a considerable optimization error, especially for high-dimensional parameter spaces. Incorporating this optimization error would align more closely with real-world scenarios.

\appendix 
\section{Appendix}\label{sec:proofs}
\subsection{Proof for \Cref{prelim}}
\begin{proof}[Proof of \Cref{Vanilla_error_decomp}]
 Let $X\sim\P^*,\hat X\sim\P_n$ and $Z\sim\mathbb U$. The symmetry of $\mathcal{W}$ and the Lipschitz continuity of $x\mapsto\log(1+e^{-x})$ yields for	any $G \in \mathcal{G}$ 
	\begin{align*}
    &\V_{\mathcal W}(\P^*,\P^{\hat G_n}(Z))
	\\ &= \sup_{W \in \mathcal{W}}\E\Big[-\log\Big(\frac{1+e^{-W(X)}}2\Big)+\log\Big(\frac{1+e^{-W(\hat X)}}2\Big)-\log\Big(\frac{1+e^{-W(\hat X)}}2\Big)-\log\Big(\frac{1+e^{W(\hat{G}_n(Z))}}2\Big)\Big] \\
	& \leq \sup_{W \in \mathcal{W}}\E\big[-\log(1+e^{-W(X)})+\log(1+e^{-W(\hat X)})\big] + \V_{\mathcal W}(\P_n,\P^{\hat G_n(Z)}) \\
	& \leq \sup_{W \in \mathcal{W}}\E\big[-\log(1+e^{-W(X)})+\log(1+e^{-W(\hat X)})\big] + \V_{\mathcal W}(\P_n,\P^{G(Z)}) \\
	& = \sup_{W \in \mathcal{W}}\E[-\log(1+e^{-W(X)})+\log(1+e^{-W(\hat X)})] \\ & \quad + \sup_{W \in \mathcal{W}} \E[-\log(1+e^{-W(\hat X)}) + \log(1+e^{-W(X)})] +\V_{\mathcal W}(\P^*,\P^{G(Z)}) \\
&\leq 	2\sup_{W \in \Lip(1) \circ \mathcal{W}}\E[W(X)-W(\hat X)]+\V_{\mathcal W}(\P^*,\P^{G(Z)}) . 
	\end{align*} 
 The bound for $\hat G_n$ from \eqref{G_hat_standard} follows since $G \in \mathcal{G}$ was arbitrary.
\end{proof}
\begin{proof}[Proof of \Cref{existence_minimizer}]
	Let $(G_n)_{n \in \mathbb{N}}\in \mathcal{G}$ be a sequence that converges to $G\in \mathcal{G}.$  If $\V_{\Lip(L)}(\P^*, \P^{G(Z)}) \geq \V_{\Lip(L)}(\P^*, \P^{G_n(Z)})$, then
	\begin{align*}
	  	&\V_{\Lip(L)}(\P^*, \P^{G(Z)})-\V_{\Lip(L)}(\P^*, \P^{G_n(Z)})\\  
		&\leq 		\sup_{W \in \Lip(L)} \E\Big[\log\Big(\frac{1+e^{-W(G_n(Z))}}{2}\Big) - \log\Big(\frac{1+e^{-W(G(Z))}}{2}\Big) \Big]\\
		 & \leq \sup_{W \in \Lip(L)} \E\Big[W(G_n(Z)) - W(G(Z)) \Big]\\
		 & \leq L \| G_n - G \|_{\infty}.
	\end{align*}
	The case $\V_{\Lip(L)}(\P^*, \P^{G(Z)}) < \V_{\Lip(L)}(\P^*, \P^{G_n(Z)})$ can be bounded analogously. Therefore, $T$ is continuous and there is at least one minimizer if $\mathcal G$ is compact. 
\end{proof}

\subsection{Proofs for \Cref{compartibility}}
Before we prove the main results from \Cref{compartibility} we require an auxiliary lemma:
\begin{lemma}
	\label[lemma]{obere_schranke_allgemein}
	For $X \sim \P$ and $Y \sim \mathbb{Q}$ and an arbitrary set of measurable functions $\mathcal{W}$ we have that
	\begin{align*}
		\V_{\mathcal{W}}(\P,\mathbb Q)\leq 	\sup_{W \in \mathcal{W}} \E[-\log\big(1+e^{-W(X)}\big)+ \log\big(1+e^{-W(Y)}\big)].
	\end{align*}
\end{lemma}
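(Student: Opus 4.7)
The plan is to reduce the inequality to a purely pointwise elementary inequality in one real variable, at which point one can conclude by taking expectations and then the supremum over $W \in \mathcal{W}$.

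First, I would rewrite the defining expression \eqref{shifted_Vanilla} by pulling the factor $1/2$ out of each logarithm: for every measurable $W$,
\begin{align*}
    -\log\!\Big(\tfrac{1+e^{-W(X)}}{2}\Big) - \log\!\Big(\tfrac{1+e^{W(Y)}}{2}\Big)
    = 2\log 2 - \log(1+e^{-W(X)}) - \log(1+e^{W(Y)}).
\end{align*}
The target right hand side, on the other hand, has the integrand $-\log(1+e^{-W(X)}) + \log(1+e^{-W(Y)})$. The $X$-dependent terms already match, so the whole lemma reduces to the single pointwise claim
\begin{align*}
    2\log 2 - \log(1+e^{W(Y)}) \le \log(1+e^{-W(Y)}),
\end{align*}
which, after rearranging, is equivalent to $(1+e^{y})(1+e^{-y}) \ge 4$ for every $y \in \mathbb{R}$, with $y = W(Y)$.

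Next I would prove this elementary inequality: expanding gives $(1+e^y)(1+e^{-y}) = 2 + e^y + e^{-y}$, and the AM-GM inequality yields $e^y + e^{-y} \ge 2\sqrt{e^y \cdot e^{-y}} = 2$, with equality at $y=0$. Hence the pointwise inequality holds unconditionally.

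Finally, integrating this pointwise bound with respect to $Y \sim \mathbb{Q}$ (and leaving the $X$-term untouched) shows that for every $W \in \mathcal{W}$,
\begin{align*}
    \E\!\left[-\log\!\Big(\tfrac{1+e^{-W(X)}}{2}\Big) - \log\!\Big(\tfrac{1+e^{W(Y)}}{2}\Big)\right]
    \le \E\!\left[-\log(1+e^{-W(X)}) + \log(1+e^{-W(Y)})\right].
\end{align*}
Taking the supremum over $W \in \mathcal{W}$ on both sides gives the claim. There is no real obstacle here; the only thing to watch is the bookkeeping of signs when removing the $1/2$ inside the logarithms, and the fact that one must verify the inequality pointwise (rather than only in expectation) so that the supremum on the right is estimated correctly.
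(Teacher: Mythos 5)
Your proposal is correct and rests on exactly the same key fact as the paper's proof, namely the elementary inequality $\log(1+e^{x})+\log(1+e^{-x})\ge\log 4$ (equivalently $(1+e^{x})(1+e^{-x})\ge 4$), applied pointwise to $W(Y)$ before taking expectations and the supremum. The only cosmetic difference is that you argue pointwise throughout, whereas the paper adds and subtracts $\log(1+e^{-W(Y)})$ inside the supremum and bounds the resulting infimum term by $\log 4$; the substance is identical.
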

\begin{proof}
	Since  \begin{align*}
		\log(1+e^{x}) + \log(1+e^{-x}) \geq \log(4) \quad  \text{ for all } x \in \mathbb{R},
	\end{align*} we can bound 
	\begin{align*}
		\sup_{W \in\mathcal{W}}& \E\Big[-\log\big(1+e^{-W(X)}\big)-\log\big(1+e^{W(Y)}\big)\Big]  \\
		& = 	\sup_{W \in \mathcal{W}} \E[-\log\big(1+e^{-W(X)}\big)+ \log\big(1+e^{-W(Y)}\big) - \log\big(1+e^{-W(Y)}\big)-\log\big(1+e^{W(Y)}\big)] \\
		& \leq 	\sup_{W \in \mathcal{W}} \E[-\log\big(1+e^{-W(X)}\big)+ \log\big(1+e^{-W(Y)}\big)]  \quad- \inf_{W \in \mathcal{W}}\E[ \log\big(1+e^{-W(Y)}\big)+\log\big(1+e^{W(Y)}\big)]  \\
		& \leq 	\sup_{W \in \mathcal{W}} \E[-\log\big(1+e^{-W(X)}\big)+ \log\big(1+e^{-W(Y)}\big)] - \log(4).\qedhere
	\end{align*}
\end{proof}
\begin{proof}[Proof of \Cref{L2_penalty}]
	  Defining 
   \begin{align*}
		\psi\colon \mathbb{R} \rightarrow \mathbb{R}, \ \psi(x) \coloneqq -\log\Big( \frac{1+e^{-x}}{2}\Big),
	\end{align*}
	we can rewrite
   \begin{align*}
		\V_{\Lip(L, B)} (\P, \mathbb{Q}) = 	\sup_{W \in \Lip(L, B)} \E[ \psi(W(X))+ \psi(-W(Y))].
	\end{align*}
 The function  $f\colon [-\log(2-2/L),\infty) \rightarrow \mathbb{R},$ $ \ f(x) =  \log(2e^x-1) $ is bijective and Lipschitz continuous with Lipschitz constant $L$ and satisfies $\psi(-f(x))=x$ for all $x\ge-\log(2-2/L)$. Therefore, we obtain a lower bound
 \begin{align*}
     \V_{\Lip(L, B)} (\P, \mathbb{Q}) &\geq \sup_{ \substack{W \in \Lip(1, \log((1+e^B)/2))\\ W(\cdot) \geq -\log(2-2/L)   }} \E[ \psi(f(W(X)))+ \psi(-f(W(Y)))]\\
      & = \sup_{\substack{W \in \Lip(1, B')\\  W(\cdot) \geq -\log(2-2/L)  }} \E[\psi(f(W(X)))-W(Y)].
 \end{align*}
 Since $f^{-1} \in  \Lip(1,\R)$, we can estimate $\V_{\Lip(L, B)}$ from above by
  \begin{align*}
     \V_{\Lip(L, B)} (\P, \mathbb{Q}) & =\sup_{W \in \Lip(L, B)} \E[ \psi(f(f^{-1}(W(X))))+ \psi(-f(f^{-1}(W(Y))))] \\
     & \leq \sup_{\substack{W \in \Lip(L, B) \\ W(\cdot ) >-\log(2) }}\E[ \psi(f(W(X)))+ \psi(-f(W(Y)))] \\
     & = \sup_{\substack{W \in \Lip(L, B) \\ W(\cdot ) >-\log(2)}} \E[ \psi(f(W(X))) - W(Y)].
 \end{align*}
 A Taylor approximation at zero of the function $\psi \circ f(x)=\log(2-e^{-x})$ yields that for every $x \in (-\log(2), \infty )$ there exists a $\xi $ between $x$ and $0$ such that
 \begin{align*}
     \psi \circ f (x) = x - \frac{e^{\xi}}{(2e^{\xi}-1)^2}x^2.
 \end{align*} 
 For the lower bound, we thus conclude
 \begin{align*}
     \V_{\Lip(L, B)} (\P, \mathbb{Q}) \geq  \sup_{\substack{W \in \Lip(1, B')\\  W(\cdot) \geq -\log(2-2/L)  }} \E[W(X)-W(Y)] - \frac{L(L-1)}{2} \E[W(X)^2].
 \end{align*}
 For the upper bound, we get
 \begin{equation*}
     \V_{\Lip(L, B)} (\P, \mathbb{Q}) \leq \sup_{\substack{W \in \Lip(L, B) \\ W(\cdot ) >-\log(2) }} \E[ W(X) - W(Y)] - \frac{e^{B}}{(2e^B-1)^2} \E[W(X)^2].\qedhere
 \end{equation*}
\end{proof}
Note that, using the function $g \colon (-\infty, \log(2-2/L)) \rightarrow \mathbb{R}, \; g(x) = -\log(2e^{-x}-1),$ we obtain lower and upper bounds with a penalty term depending on $\E[W(Y)^2]$ instead of $\E[W(X)^2]$.

\begin{proof}[Proof of \Cref{equiv_fast}]
	We prove the lower bound first. \Cref{L2_penalty} yields
\begin{align*}
    \V_{\Lip(L, B)} (\P, \mathbb{Q})& \geq    \sup_{\substack{W \in \Lip(1, B^{\prime})\\ W(\cdot) > -\log(2-2/L)   }} \E[W(X)-W(Y)]- \frac{L(L-1)}{2} \E[W(X)^2]\\
    & \geq  \sup_{ \substack{W \in \Lip( 1, \log(2-2/L)) }}\E[W(X)- W(Y)] - \frac{L(L-1)}{2} \E[W(X)^2] .
	\end{align*}
	Let $W^* \in \underset{ \substack{W \in \Lip( 1, \log(2-2/L)) }}{\arg\max} \E [W(X)-W(Y)].$  This element exists by \cite[Theorem 5.10 (iii)]{Villani2008}.
 Then $\delta W^*\in \Lip( 1, \log(2-2/L))$ for all $\delta \in (0,1]$ and we can conclude
	\begin{align*}
		\sup_{ \substack{W \in \Lip( 1, \log(2-2/L)) }}\E[W(X)&- W(Y)] - \frac{L(L-1)}{2} \E[W(X)^2] \\& \geq \sup_{\delta \in (0,1]} \Big\{\E[\delta W^*(X)- \delta W^*(Y)] - \frac{L(L-1)}{2} \E[(\delta W^*(X))^2]\Big\}\\
		& = \sup_{\delta \in (0,1]}\Big\{ \delta  \E[ W^*(X)-  W^*(Y)] - \delta^2 \frac{L(L-1)}{2}\E[( W^*(X))^2] \Big\},
	\end{align*}
    which is independent from $B$.
	In case $\Delta := \E[W^*(X)- W^*(Y)]  < L(L-1)\E[W^*(X)^2]$ we have for $\delta = \frac{\Delta }{\E[W^*(X)^2]L(L-1)}\in(0,1)$
	\begin{align*}
		&\sup_{ \substack{W \in \Lip( 1, \log(2-2/L)) }}\E[W(X)- W(Y)] - \frac{L(L-1)}{2} \E[W(X)^2]\\
        &\qquad\qquad\qquad\geq   \frac{\Delta^2 }{\E[W^*(X)^2]L(L-1)} -\frac{\Delta^2}{2\E[W^*(X)^2]L(L-1)}\\
		  &\qquad\qquad\qquad= \frac{\Delta^2}{2\E[W^*(X)^2]L(L-1)} \\
        &\qquad\qquad\qquad\ge\frac{\Delta^2}{2\log(2-2/L)^2L(L-1)},
	\end{align*}
    where we used $|W^*(x)|\le\log(2-2/L)$ in the last step. In case $\Delta \geq L(L-1)\E[W^*(X)^2]$ we obtain
	\begin{align*}
		\E[W^*(X)- W^*(Y)] - \frac{L(L-1)}{2}\E[W^*(X)^2]&\geq \frac{1}{2} \E[W^*(X)- W^*(Y)]. 
	\end{align*}
Using  the boundedness of $[0,1]^d$, we get \begin{align*}
    \Delta&=\sup_{ \substack{W \in \Lip( 1, \log(2-2/L)) }}\E [W(X)-W(Y)]\\
    &\geq  \sup_{ \substack{W \in \Lip(\log(2-2/L)d^{-1/p},\infty)  }}\E [W(X)-W(Y)]\\& = \frac{\log(2-2/L))}{d^{1/p}}\mathsf{W}_1(\P, \mathbb{Q}).
\end{align*}
  Hence we can conclude the claimed lower bound for 
 \begin{align*}
		c_1 =  \frac{1}{2} \frac{\log(2-2/L)}{d^{\frac{1}{p}}}, \quad 
		c_2 =  \frac{1}{2d^{\frac{2}{p}}L(L-1)}.
	\end{align*}
	 
	For the upper bound we use \Cref{obere_schranke_allgemein} with $\mathcal{W} =\Lip(L).$ Since for $W\in \Lip(L)$ the function $-\log\big(1+e^{-W(\cdot)} \big)\in\Lip(L) $ we conclude
	\begin{align*}
		\V_{\Lip(L,B)}(\P,\mathbb Q)&\le\sup_{W \in \Lip(L)} \E[\psi(W(X))+ \psi(W(Y))] \\
        & \leq 	\sup_{W \in \Lip(L)} \E[-\log\big(1+e^{-W(X)}\big)+ \log\big(1+e^{-W(Y)}\big)]\\& \leq
		\sup_{W \in \Lip(L)} \E[ W(X)-W(Y)]\\
		& = L\sup_{W \in \Lip(1)} \E[ W(X)-W(Y)].\qedhere
	\end{align*}
\end{proof}

\subsection{Proofs for \Cref{lipschitz_vanilla}}
\begin{proof}[Proof of \Cref{dreiecksugls}]
Using \Cref{equiv_fast} and the triangle inequality for the Wasserstein distance, we deduce for every $G \in \mathcal{G}$ and $c=\max(c_1^{-1},c_2^{-1/2})$ that
\begin{align*}
    \mathsf{W}_1(\P^*, \P^{\hat{G}_n(Z)})& \leq \mathsf{W}_1(\P^*, \P_n)+ \mathsf{W}_1(\P_n, \P^{\hat{G}_n(Z)})\\
     & \leq \mathsf{W}_1(\P^*, \P_n)+ c \big[\V_{\Lip(L,B)}(\P_n, \P^{\hat{G}_n(Z)}) \big]^{1;1/2}\\
     & \leq \mathsf{W}_1(\P^*, \P_n)+ c \big[\V_{\Lip(L,B)}(\P_n, \P^{G(Z)}) \big]^{1;1/2}\\
     & \leq \mathsf{W}_1(\P^*, \P_n)+ cL \big[\mathsf{W}_{1}(\P_n, \P^{G(Z)}) \big]^{1;1/2}\\
     & \leq  (1+cL) \big[\mathsf{W}_1(\P^*, \P_n)\big]^{1;1/2}+ cL \big[\mathsf{W}_{1}(\P^* ,\P^{G(Z)}) \big]^{1;1/2}.
\end{align*}
As $G \in \mathcal{G}$ was arbitrary, we can choose the infimum over $\mathcal{G}.$
\end{proof}

\begin{proof}[Proof of \Cref{rate_schreuder}]
	For every measurable $G^* \colon \mathcal{Z} \rightarrow \mathcal{X}$ and any $G\in\mathcal G$ we have
  \begin{align*}
      \mathsf{W}_1(\P^*, \P^{\hat G_n(Z)}) 
      & \le \mathsf{W}_1(\P^*, \P^{G^*(Z)}) + \mathsf{W}_1(\P^{G^*(Z)}, \P^{G(Z)})\\
      &= \mathsf{W}_1(\P^*, \P^{G^*(Z)}) +\sup_{W \in \Lip(1)} \E[W(G^*(Z)) - W(G(Z))]\\
      &\le \mathsf{W}_1(\P^*, \P^{G^*(Z)}) +\E[|G^*(Z) - G(Z)|_p]\\
      &\le \mathsf{W}_1(\P^*, \P^{G^*(Z)}) +\| G^* - G\|_{\infty}.
  \end{align*}
    Since $G^*$ was arbitrary \Cref{dreiecksugls} yields for some constant $c$
	\begin{align*}
		\E[\mathsf{W}_1(\P^*, \P^{G(Z)})] \leq & c \cdot \E[ \max(\sqrt{\mathsf{W}_1(\P_n, \P^*)} , \mathsf{W}_1(\P_n, \P^*) )] \\
  &+ c \cdot  \inf_{\substack{G^{*}\colon \mathcal{Z} \rightarrow \mathcal{X}}}
  \Big\{ [\mathsf{W}_1(\P^*, \P^{G^*(Z)})]^{1;1/2} + [\inf_{G\in\mathcal G}\|G-G^*\|^{1;1/2}_{\infty}]  \Big\}
	\end{align*}	
  Here the infimum can be used as we can increase the constant $c$ multiplied to both terms by an arbitrary small $\varepsilon>0$ to account for the possibly infinitesimal smaller value.
	Using Jensen's inequality, we can bound the stochastic error term by
    \begin{align*}		
		\E[ \max(\sqrt{\mathsf{W}_1(\P_n, \P^*)} , \mathsf{W}_1(\P_n, \P^*) )]& \leq   \E[\sqrt{\mathsf{W}_1(\P_n, \P^*)}] +  \E[ \mathsf{W}_1(\P_n, \P^*) ] \\
		& \leq   \sqrt{\E[\mathsf{W}_1(\P_n, \P^*)]} +  \E[ \mathsf{W}_1(\P_n, \P^*) ] .
	\end{align*}	
	From \citet[Theorem 4]{Schreuder2020_2} we know
	\begin{align*}
		\E[\mathsf{W}_1(\P^*, \P_n)] \leq c^{\prime} \begin{cases}n^{-1 / d}, & d>2 \\ n^{-1 / 2} \log (n), & d=2 \\ n^{-1 / 2}, & d=1.\end{cases}
	\end{align*} where $c$ depends only on $d$.
	Since $(\log n)/\sqrt n\le1$, we conclude
	\begin{align*}
		\sqrt{\E[\mathsf{W}_1(\P_n, \P^*)]} +  \E[ \mathsf{W}_1(\P_n, \P^*) ] \leq 2 c^{\prime} \begin{cases}n^{-1 /2 d}, & d>2 \\ n^{-1/4}(\log n)^{1/2}, & d=2 \\ n^{-1 / 4}, & d=1.\end{cases}
	\end{align*}
	
\end{proof}

\begin{proof}[Proof of \Cref{dim_red}]

	With the same reasoning as in the proof of \Cref{rate_schreuder},  there exists some $c$ such that for any measurable $G^*\colon\mathcal Z\to\mathcal X$ and any $G\in\mathcal G$
	\begin{align*}
		\E[\mathsf{W}_1(\P^*, \P^{G(Z)})] & \leq  c \big( \sqrt{\E[\mathsf{W}_1(\P_n, \P^*)]} +  \E[ \mathsf{W}_1(\P_n, \P^*) ]  + [\mathsf{W}_1(\P^*, \P^{G^*(Z)})]^{1;1/2}+   [\inf_{G \in \mathcal{G}} \| G^* - G\|_{\infty} ]^{1,1/2}\big)
	\end{align*}
By the triangle inequality \begin{align*}
    \mathsf{W}_1(\P_n, \P^*) \leq \mathsf{W}_1(\P_n, \P^{G^*(Z)}) + \mathsf{W}_1( \P^{G^*(Z)}, \P^*) .
\end{align*}
Let $Z_i \sim \mathbb{U}$ be i.i.d. random variables and denote the corresponding empirical measure by $\mathbb U_n$. For $G^*\in\Lip(M,\mathcal Z)$ we can then bound the first term by
\begin{align}
    \mathsf{W}_1(\P_n, \P^{G^*(Z)}) & = \sup_{W \in \Lip(1)}\frac{1}{n} \sum_{i = 1}^{n} W(X_i) - \E[W \circ G^{*}(Z)] \notag\\
     &  \leq  \sup_{W \in \Lip(1)}\frac{1}{n} \sum_{i = 1}^{n} |W(X_i)   - W\circ G^{*}(Z_i)| + \sup_{W \in \Lip(1)} \frac{1}{n}\sum_{i = 1}^{n}  W\circ G^{*}(Z_i)- \E[W \circ G^{*}(Z)] \notag\\
     & \leq  \frac{1}{n} \sum_{i = 1}^{n}| X_i   -  G^{*}(Z_i) |_p + \sup_{f \in \Lip(M)} \frac{1}{n}\sum_{i = 1}^{n} f(Z_i)
     - \E[f(Z)] \label{eq:proofDimRed}\\
      & = \frac{1}{n} \sum_{i = 1}^{n}| X_i   -  G^{*}(Z_i) |_p + M \cdot \mathsf W_{1}(\mathbb{U}_n, \mathbb{U})\notag
\end{align}
Hence,
\begin{align*}
    \E\big[\mathsf{W}_1(\P_n, \P^{G^*(Z)}) \big] \leq \frac{1}{n}\sum_{i = 1}^{n} \E[| X_i   -  G^{*}(Z_i) |_p] + M\cdot  \E[ W_{1}(\mathbb{U}_n, \mathbb{U})].
\end{align*}
Note that $\E[| X_i   -  G^{*}(Z_i) |_p]= \mathsf{W}_1(\P^{G^*(Z)}, \P^*) $ by the duality formula of $\mathsf{W}_1$ used in this work, see \citet[ Definition 6.2 and Remark 6.5]{Villani2008}. For $\E[ W_{1}(\mathbb{U}_n, \mathbb{U})]$, we can exploit the convergence rate for the empirical distribution as in \Cref{rate_schreuder}, but now in the $d^*$-dimensional latent space $\mathcal Z$. Therefore, there exists a $c^{\prime}$ such that
	\begin{align*}
		\sqrt{\E[\mathsf{W}_1(\P_n, \P^*)]} +  \E[ \mathsf{W}_1(\P_n, \P^*) ] \leq  c^{\prime} [\mathsf{W}_1(\P^{G^*(Z)}, \P^*)]^{1;1/2} + c^{\prime} \begin{cases}n^{-1 /2 d^*}, & d^*>2 \\ n^{-1/4}(\log n)^{1/2}, & d^*=2 \\ n^{-1 / 4}, & d^*=1.\end{cases}
	\end{align*}
\end{proof}

\subsection{Proofs of \Cref{dreiecksugls_diskriminierer} and \Cref{hoelder_oracle}}
\begin{proof}[Proof of \Cref{dreiecksugls_diskriminierer}]
    First, we verify that for any two nonempty sets $\mathcal{W}_1$ and $\mathcal{W}_2$ we have
	\begin{align}
		\label{Approximationsfehler}
		\V_{\mathcal{W}_1 }(\P, \mathbb{Q}) \leq   \V_{\mathcal{W}_2}(\P, \mathbb{Q}) + 2 \inf_{W \in \mathcal{W}_2} \sup_{W^* \in \mathcal{W}_1} \| W-W^*\|_{\infty}.
	\end{align}
	Indeed, the difference $\V_{\mathcal{W}_1 }(\P, \mathbb{Q})-\V_{\mathcal{W}_2}(\P, \mathbb{Q})$ is bounded by
	\begin{align*}
		&\inf_{W \in \mathcal{W}_2} \sup_{W^* \in \mathcal{W}_1}  \Big\{\E\Big[-\log\Big(\frac{1+e^{-W^*(X)}}{2}\Big)-\log\Big(\frac{1+e^{W^*(Y)}}{2}\Big)\Big] \\
        &\qquad\qquad\qquad\quad- \E\Big[-\log\Big(\frac{1+e^{-W(X)}}{2}\Big)-\log\Big(\frac{1+e^{W(Y)}}{2}\Big)\Big]\Big\}\\
		\leq & 	\inf_{W \in \mathcal{W}_2} \sup_{W^* \in \mathcal{W}_1} \Big\{\E\Big[\Big| -\log\Big(\frac{1+e^{-W^*(X)}}{2}\Big) + \log\Big(\frac{1+e^{-W(X)}}{2}\Big)\Big|\Big] \\
		& \qquad\qquad\qquad\quad  +  \E\Big[\Big| -\log\Big(\frac{1+e^{W^*(Y)}}{2}\Big) + \log\Big(\frac{1+e^{W(Y)}}{2}\Big)\Big|\Big]\Big\}\\
		&\leq   \inf_{W \in \mathcal{W}_2} \sup_{W^* \in \mathcal{W}_1}  \big\{   \E[|W^*(X)-W(X)|] +   \E[|W^*(Y)-W(Y)|]\big\}\\
		& \leq 2  \inf_{W \in \mathcal{W}_2} \sup_{W^* \in \mathcal{W}_1}   \|W^* -W \|_{\infty},
	\end{align*} 
	due to Lipschitz continuity of $x\mapsto -\log((1+e^x)/2)$.
	
	From \eqref{Approximationsfehler} we deduce for $\mathcal W\subset\Lip(L,B)$
    \begin{align*} 
		\V_{\Lip(L,B) }(\P, \mathbb{Q}) \leq  \V_{\mathcal{W}}(\P, \mathbb{Q}) + 2 \inf_{W^{\prime} \in \mathcal{W}}\sup_{W \in \Lip(L,B)} \|W-W^{\prime } \|_{\infty}.
	\end{align*}
	
	We abbreviate $\Delta_{\mathcal{W}} \coloneqq \inf_{W^{\prime} \in \mathcal{W}}\sup_{W \in \Lip(L,B)} \|W-W^{\prime } \|_{\infty}.$ Now we can proceed as in \Cref{dreiecksugls}. In particular, it is sufficient to bound $\mathsf{W}_1(\P_n, \P^{\hat G_n(Z)}).$ Due to \Cref{equiv_fast} there is some constant $c>0$ such that for every $G \in \mathcal{G}$
	\begin{align*}
		\mathsf{W}_1(\P_n, \P^{\hat{G}_n(Z)})  & \leq c [ \V_{ \Lip(L,B)}(\P_n, \P^{\hat{G}_n(Z)})]^{1;1/2}\\
		& \leq c [ \V_{ \mathcal{W}}(\P_n, \P^{\hat{G}_n(Z)})+ 2 \Delta_{\mathcal{W}}]^{1;1/2}\\
		& \leq  c [ \V_{\mathcal{W}}(\P_n, \P^{\hat{G}_n(Z)})]^{1;1/2} +  2c [  \Delta_{\mathcal{W}}]^{1;1/2}\\
		& \leq  c [ \V_{\mathcal{W}}(\P_n, \P^{G(Z)})]^{1;1/2} +  2c [  \Delta_{\mathcal{W}}]^{1;1/2}
	\end{align*}
	Because $ \V_{\mathcal{W}}(\P_n, \P^{G(Z)}) \leq \V_{\Lip(L,B) }(\P_n, \P^{G(Z)})$ due to $\mathcal W\subset\Lip(L,B)$, the rest of the proof is identical to the proof of \Cref{dreiecksugls}.
\end{proof}

\begin{proof}[Proof of \Cref{hoelder_oracle}]
	Since $\mathsf{W}_1(\P_n, \P^*)$ can be estimated as in \Cref{dim_red}, we only need to bound $\mathsf{W}_1(\P_n, \P^{\hat{G}_n(Z)})$. 
 For $\Gamma > \max(L, 2B)$, we have $\Lip(L,B) \subset \mathcal{H}^{\alpha}(\Gamma)$, $\alpha\in(0,1)$, and the assumptions of \Cref{equiv_fast} are satisfied. Therefore for every $\alpha \in (0,1)$
 \begin{align*}
     \mathsf{W}_1(\P_n, \P^{\hat{G}_n(Z)}) &\leq c \big[ \V_{\Lip(L,B)} (\P_n, \P^{\hat{G}_n(Z)})\big]^{1;1/2}
      \leq c \big[ \V_{\mathcal{H}^{\alpha}(\Gamma)} (\P_n, \P^{\hat{G}_n(Z)})\big]^{1;1/2}.
 \end{align*}
 Now, \eqref{Approximationsfehler} yields
 \begin{align*}
      \V_{\mathcal{H}^{\alpha}(\Gamma)} (\P_n, \P^{\hat{G}_n(Z)}) \leq  \V_{\mathcal{W}} (\P_n, \P^{\hat{G}_n(Z)}) + 2\Delta_{\mathcal W}\qquad\text{for}\qquad \Delta_{\mathcal W}:=\inf_{W \in  \mathcal{W} }\sup_{W^* \in \mathcal{H}^{\alpha}(\Gamma)} \|W^* - W\|_{\infty}.
 \end{align*}
 Using that $\hat{G}_n$ is the empirical risk minimizer and $\mathcal W\subseteq\mathcal H^\alpha(\Gamma)$, we thus have
 \begin{align*}
     \mathsf{W}_1(\P_n, \P^{\hat{G}_n(Z)}) &\leq c \big[ \V_{ \mathcal{W}} (\P_n, \P^{\hat{G}_n(Z)})\big]^{1;1/2} +c [ \Delta_{\mathcal{W}}]^{1;1/2}\\
     & \leq c \big[ \V_{\mathcal{W}} (\P_n, \P^{G(Z)})\big]^{1;1/2} +c [ \Delta_{\mathcal{W}}]^{1;1/2}\\
     & \leq c \big[ \V_{\mathcal{H}^{\alpha}(\Gamma)} (\P_n, \P^{G(Z)})\big]^{1;1/2} +c [ \Delta_{\mathcal{W}}]^{1;1/2}
 \end{align*}
To bound the first term, we apply \Cref{obere_schranke_allgemein} and $\{-\log(1+e^{-W(\cdot)})\mid W\in\mathcal H^\alpha(\Gamma)\}\subset \mathcal H^\alpha(\Gamma)$ to obtain
\begin{align}
    \V_{\mathcal{H}^{\alpha}(\Gamma)} (\P_n, \P^{G(Z)}) &\leq \sup_{W \in \mathcal{H}^{\alpha}(\Gamma)} \E_{\substack{\hat X \sim \P_n }}[-\log\Big(1+e^{-W(\hat X)} \Big)+\log\Big(1+e^{-W(G(Z))} \Big)] \notag  \\
    & \leq \sup_{W \in \mathcal{H}^{\alpha}(\Gamma)} \E_{\substack{\hat X \sim \P_n \\ }} [W(\hat X)- W(G(Z))] \label{W_Halpha(P_n, P^G)}\\
    & \leq \sup_{W \in \mathcal{H}^{\alpha}(\Gamma)} \E_{\substack{\hat X \sim \P_n } } [W(\hat X)- W(X)]+ \sup_{W \in \mathcal{H}^{\alpha}(\Gamma)} \E [W(X)- W(G(Z))]. \notag
\end{align} 
For the second term we have by Hölder continuity, Jensens inequality and the duality formula of $\mathsf W_1$ as used in the proof of \Cref{dim_red} that
\begin{align*}
    \sup_{W \in \mathcal{H}^{\alpha}(\Gamma)} \E [W(X)- W(G(Z))] 
    &\leq \sup_{W \in \mathcal{H}^{\alpha}(\Gamma)} \E [|W(X)- W(G^*(Z))|]+\sup_{W \in \mathcal{H}^{\alpha}(\Gamma)} \E [|W(G^*(Z))- W(G(Z))|]\\
    & \leq \Gamma\E[|X-G^*(Z)|_p^\alpha]+\Gamma \|G^* - G\|_{\infty}^{\alpha}\\
    & \le \Gamma\mathsf W_1(\P^*,\P^{G^*(Z)})^\alpha+\Gamma \|G^* - G\|_{\infty}^{\alpha}.
\end{align*}
Hence, we have for any $G\in\mathcal G$ and any measurable $G^*\colon\mathcal Z\to\mathcal X$ for some constant $c>0$
\begin{align*}
    \mathsf{W}_1(\P_n, \P^{\hat{G}_n(Z)}) \leq &  c \big[ \sup_{W \in \mathcal{H}^{\alpha}(\Gamma)} \E_{\substack{\hat X \sim \P_n } } [W(\hat X)- W(X)]\big]^{1;1/2} \\
    &\qquad+  c\big[\mathsf W_1(\P^*,\P^{G^*(Z)})^\alpha+ \|G^* - G\|_{\infty}^{\alpha}\big]^{1;1/2}+c [ \Delta_{\mathcal{W}}]^{1;1/2}.
\end{align*}
For the remaining stochastic error term, we first note that
\begin{align*}
    \sup_{W \in \mathcal{H}^{\alpha}(\Gamma)} \E_{\substack{\hat X \sim \P_n } } [W(\hat X)- W(X)] 
    &\leq \sup_{W \in \mathcal{H}^{\alpha}(\Gamma)} \E_{\substack{X_n \sim \P_n } } [W(X_n)- W(G^*(Z))]\\
    &\qquad+ \sup_{W \in \mathcal{H}^{\alpha}(\Gamma)} \E [W(G^*(Z))- W(X)]\\
    &\leq \sup_{W \in \mathcal{H}^{\alpha}(\Gamma)} \E_{\substack{X_n \sim \P_n } } [W(X_n)- W(G^*(Z))]+\Gamma\mathsf W_1(\P^*,\P^{G^*(Z)})^\alpha
\end{align*}
and as in \eqref{eq:proofDimRed} together with \citet[Theorem 4]{Schreuder2020_2} we obtain
\begin{align*}
 \E\Big[\sup_{W \in \mathcal{H}^{\alpha}(\Gamma)} \E_{\substack{X_n \sim \P_n } } [W(X_n)- W(G^*(Z))]\Big] 
 & \leq \E\Big[ \sup_{W \in \mathcal{H}^{\alpha}(\Gamma)} |X- G^*(Z)|_p^\alpha\Big] \\ 
 & \qquad+ \E\Big[\sup_{f \in \mathcal{H}^{\alpha}(M \cdot \Gamma)} \frac{1}{n}\sum_{i = 1}^n  f(Z_i) - \E[f(Z)]\Big]\\
 &\le c\mathsf{W}_1(\mathbb{P^*}, \P^{G^*(Z)})^{\alpha}+c  \begin{cases}n^{-\alpha / d^*}, &  2\alpha<d^* , \\ n^{-1 / 2} \ln (n), & 2 \alpha=d^* , \\ n^{-1 / 2}, &  2\alpha>d^* .\end{cases}
\end{align*}

For the expectation of the first term we use Jensen's inequality 
\begin{equation*}
    \E[|X_i -G^*(Z_i)|_p^{\alpha}] \leq \E[|X_i -G^*(Z_i)|_p]^{\alpha} = \mathsf{W}_1(\mathbb{P^*}, \P^{G^*(Z)})^{\alpha}.\qedhere
\end{equation*}
\end{proof}

\subsection{Proof of \Cref{approx_res_hoelder}}

To prove \Cref{approx_res_hoelder} some additional notation is required. 
The set of locally integrable functions is given by
$$
L_{\text{loc}}^1 (\Omega) \coloneqq \{ f\colon \Omega \rightarrow \mathbb{R} \Big| \int_{K} |f(x)|\;\mathrm{d}x < \infty, \; \text{for all compact } K \subset \Omega^{\circ} \}.
$$
A function $f \in L_{\text{loc}}^1(\Omega)$ has a weak derivative, $D_w^\alpha f$, provided there exists a function $g \in L_{\text{loc}}^1(\Omega)$ such that
$$
\int_{\Omega} g(x) \phi(x) d x=(-1)^{|\alpha|} \int_{\Omega} f(x) \phi^{(\alpha)}(x) d x \quad \text{for all } \phi \in C^{\infty}(\Omega) \text{ with compact support}.
$$
If such a $g$ exists, we define $D_w^\alpha f\coloneqq g.$ For $f \in L_{\text{loc}}^1(\Omega)$ and $k \in \mathbb{N}_0$ the Sobolev norm is
\begin{equation*}
\|f\|_{W^{k,\infty}(\Omega)}:=\max _{|\alpha| \leq k}\left\|D_w^\alpha f\right\|_{\infty, \Omega} .
\end{equation*}
The Sobolev space $W^{k,\infty}(\Omega):= \{ f \in L_{\text{loc}}^1(\Omega):\|f\|_{W_p^k(\Omega)}<\infty \}$ is a Banach space \cite[Theorem 1.3.2]{Brenner_Scott2008}.
For  $f \in W^{k,\infty}(\Omega)$, define the Sobolev semi norm by
$$
|f|_{W^{k,\infty}(\Omega)}\coloneqq \max _{|\alpha|=k}\left\|D_w^\alpha f\right\|_{{\infty}, \Omega} .
$$
Note that $\Lip(L, B,\Omega)\subset W^{1,\infty}(\Omega)$, since $\|f\|_{W^{1,\infty}} \leq \max(L,B)$ for any $f \in \Lip(L, B,\Omega)$. For two normed spaces $(A, \| \cdot\|_A), (B, \| \cdot\|_B)$ we denote the operator norm of a linear operator $T \colon A \rightarrow B$ by
$$
\|T\|:=\sup \big\{\|T x\|_B \;|\; x \in A,\|x\|_A \leq 1\big\}.
$$

\Cref{approx_res_hoelder} is very close to by \cite[Theorem 4.1]{Guehring2020}, which however applies only to functions $f$ which are at least twice (weakly) differentiable. Our proof can thus build on numerous auxiliary results and arguments from \cite{Guehring2020}. We basically keep the proof structure of  \cite{Guehring2020} which in turn relies on \cite{Yarotsky2017}.

Let $d, N \in \mathbb{N}.$ For $m \in \{0,...,N\}^d,$ define the functions $\phi_m \colon\mathbb{R}^d \rightarrow \mathbb{R},$ \begin{equation*}
    \phi_m(x) = \prod_{\ell = 1}^{d} \psi\Big(3N\Big(x_{\ell}- \frac{m_{\ell}}{N}\Big) \Big), \quad \text{where} \quad \psi(x) = \begin{cases}
        1, & |x|<1,\\
        0, & |x|>2, \\
        2-|x|, & 1 \leq |x| \leq 2
    \end{cases}
\end{equation*}
By definition, we have $\|\phi_{m}\|_{\infty}=1  $ for all $m$ and \begin{equation}\label{support_phi}\operatorname{supp} \phi_{m} \subset\left\{x:\left|x_k-\frac{m_k}{N}\right|<\frac{1}{N} \forall k\right\} \eqqcolon B_{\frac{1}{N}, |\cdot|_{\infty}}(\frac{m}{N}).\end{equation}\\
\citet[Lemma C.3 (iv)]{Guehring2020} have verified that $\|\phi_m\|_{W^{1, \infty}(\mathbb{R}^d)} \leq cN$ for some constant $c>0$.

A direct consequence of Lemma 2.11, Lemma C.3, Lemma C.5 and Lemma C.6 by \cite{Guehring2020} is the following approximation result for the localizing functions $\phi_m$ via ReLU networks:
\begin{lemma}\label{approx_phi_with_network}
    For any $\varepsilon \in (0,1/2)$ and any $m \in \{0,...,N\}^d$ there is a network $\Psi_{\varepsilon}$ with ReLU activation function, not more than $C_1\log_2(\varepsilon^{-1})$ layers and no more than $C_2 (N+1)^d\log_2^2(\varepsilon^{-1})$ nonzero weights and no more than neurons $C_3 (N+1)^d(\log_2^2(\varepsilon^{-1}) \vee \log_2(\varepsilon^{-1}))$ such that for $k \in \{0,1\}$ \begin{equation*}
        \| \Psi_{\varepsilon} - \phi_m \|_{W^{k, \infty}} \leq c N^k \varepsilon,
    \end{equation*} where $C_1,C_2,C_3$ and $c$ are constants independent of $m$ and $\varepsilon$. Additionally,
    \begin{equation*}
        \phi_m(x) = 0 \Longrightarrow \Psi_{\varepsilon} (x) = 0,
    \end{equation*}
    and therefore $\operatorname{supp}\Psi_{\varepsilon} \subset B_{\frac{1}{N}, |\cdot|_{\infty}}(\frac{m}{N}).$
\end{lemma}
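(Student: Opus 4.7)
My plan is to build $\Phi_\varepsilon$ via a partition-of-unity approximation of $f$ on a regular grid of resolution $N = \lceil c_0 \varepsilon^{-1/(1-\alpha)} \rceil$, with constant $c_0 = c_0(d,p,L,B,\alpha)$ to be fixed at the end, and then replace the bump functions by the ReLU networks from Lemma~\ref{approx_phi_with_network}. A direct tensor-product computation with the hat function $\psi$ shows that $\sum_{m \in \{0,\ldots,N\}^d} \phi_m(x) = 1$ for $x$ in the interior of $(0,1)^d$, so
\[
f_N(x) := \sum_{m \in \{0,\ldots,N\}^d} f(m/N)\, \phi_m(x)
\]
is a bona fide partition-of-unity approximation of $f$. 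Using $\operatorname{supp} \phi_m \subset B_{1/N,|\cdot|_\infty}(m/N)$, bounded overlap of the supports (at most $2^d$ nonzero terms at each point), the Lipschitz property of $f$, and $\|\phi_m\|_{W^{1,\infty}} \leq c N$ from \cite{Guehring2020}, I would verify via the identities $f_N - f = \sum_m (f(m/N) - f(\cdot))\phi_m$ and $\nabla f_N = \sum_m (f(m/N) - f(\cdot))\nabla \phi_m$ (which uses $\sum_m \nabla \phi_m = 0$) the estimates $\|f_N - f\|_\infty \leq cL/N$ and a Lipschitz bound on $f_N$ by some $C_L$ depending only on $L, d, p$.

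For a tolerance $\delta \in (0, 1/2)$ to be chosen, I would apply Lemma~\ref{approx_phi_with_network} to each $\phi_m$ to obtain a ReLU network $\Psi_{m,\delta}$ with $\|\Psi_{m,\delta} - \phi_m\|_{W^{k,\infty}} \leq cN^k \delta$ for $k \in \{0,1\}$ and $\operatorname{supp} \Psi_{m,\delta} \subset \operatorname{supp} \phi_m$, and set
\[
\Phi_\varepsilon(x) := \sum_{m \in \{0,\ldots,N\}^d} f(m/N)\, \Psi_{m,\delta}(x).
\]
Bounded overlap together with $|f(m/N)| \leq B$ yields
\[
\|\Phi_\varepsilon - f\|_\infty \leq c_1(L/N + B\delta), \qquad \|\nabla \Phi_\varepsilon\|_\infty \leq c_2(L + BN\delta).
\]
Fixing $\delta$ with $BN\delta \leq L$ (i.e.\ $\delta \sim \varepsilon^{1/(1-\alpha)}$ after absorbing $B/L$) keeps the Lipschitz constant of $\Phi_\varepsilon$ below $c_3 L$, and enlarging $c_0$ makes $\|\Phi_\varepsilon - f\|_\infty \leq c_4 \varepsilon^{1/(1-\alpha)}$.

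The Hölder control then follows from the elementary interpolation inequality
\[
|g(x) - g(y)| \leq (2\|g\|_\infty)^{1-\alpha}\, [g]_{\mathrm{Lip}}^{\alpha}\, |x-y|_p^\alpha,
\]
applied to $g = \Phi_\varepsilon - f$: plugging in the bounds above gives $|\Phi_\varepsilon - f|_{\mathcal H^\alpha} \leq c_5 L^\alpha \varepsilon$, and a final enlargement of $c_0$ yields $\|\Phi_\varepsilon - f\|_{\mathcal H^\alpha} \leq \varepsilon$. Combined with $\|f\|_{\mathcal H^\alpha} \leq L^\alpha (2B)^{1-\alpha} \leq \max(L, 2B)$ (immediate from $f \in \Lip(L,B)$ by the same interpolation applied to $f$ itself), the triangle inequality delivers both $\|\Phi_\varepsilon - f\|_\infty \leq \varepsilon$ and $\|\Phi_\varepsilon\|_{\mathcal H^\alpha} \leq \max(L, 2B) + \varepsilon$, as required.

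\textbf{Main obstacle and complexity.} The crux is the interpolation step: requiring the tight Hölder constant $\max(L, 2B) + \varepsilon$ rather than a constant multiple of it forces the much finer sup-norm accuracy $\varepsilon^{1/(1-\alpha)}$, which in turn dictates $N \sim \varepsilon^{-1/(1-\alpha)}$ and $\delta \sim \varepsilon^{1/(1-\alpha)}$. Substituting $\delta^{-1} \sim \varepsilon^{-1/(1-\alpha)}$ into Lemma~\ref{approx_phi_with_network} (whose weight bound already absorbs the $(N+1)^d$ bumps into a single shared architecture built from the 1D hat-function approximations) directly yields the stated counts: $\lceil C' \log_2(\varepsilon^{-1/(1-\alpha)})\rceil$ layers, $\lceil C'' \varepsilon^{-d/(1-\alpha)}\log_2^2(\varepsilon^{-1/(1-\alpha)})\rceil$ nonzero weights, and a comparable number of neurons.
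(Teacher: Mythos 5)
Your proposal does not prove the statement it was assigned. Lemma~\ref{approx_phi_with_network} asks you to construct, for a single fixed $m$, a ReLU network $\Psi_\varepsilon$ approximating the tensor-product bump $\phi_m(x)=\prod_{\ell=1}^d\psi(3N(x_\ell-m_\ell/N))$ simultaneously in $\|\cdot\|_{\infty}$ and in $W^{1,\infty}$ (with the factor $N^k$), with the stated depth/weight/neuron counts and with the vanishing property $\phi_m(x)=0\Rightarrow\Psi_\varepsilon(x)=0$. What you sketch instead is the downstream result (essentially \Cref{approx_function_using_phi}, \Cref{approx_sum_phi_with_network} and \Cref{approx_res_hoelder}): a partition-of-unity approximation of a Lipschitz $f$ with Hölder-norm control, in which you explicitly ``apply Lemma~\ref{approx_phi_with_network} to each $\phi_m$.'' Relative to the assigned statement this is circular — the lemma is assumed, not proved — and none of its actual content is addressed.

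A genuine proof has to face the fact that each univariate factor $\psi(3N(\cdot-m_\ell/N))$ is exactly representable by a small ReLU network (it is piecewise linear), but the $d$-fold \emph{product} is not: one needs Yarotsky-type approximate multiplication networks, and here with two extra requirements — error control also on the (weak) derivative, which is where the factor $N^k$ in $\|\Psi_\varepsilon-\phi_m\|_{W^{k,\infty}}\le cN^k\varepsilon$ comes from (the factors have Lipschitz constant of order $N$), and the property that the approximate product vanishes whenever one input vanishes, which is what yields $\operatorname{supp}\Psi_\varepsilon\subset B_{\frac1N,|\cdot|_\infty}(\frac mN)$. One then has to track depth ($O(\log_2(\varepsilon^{-1}))$), nonzero weights and neurons through the composition/parallelization. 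The paper does not reprove these facts; it obtains the lemma as a direct consequence of Lemmas~2.11, C.3, C.5 and C.6 of Gühring et al.\ (2020). Your partition-of-unity argument (with $c_{f,m}=f(m/N)$ and the interpolation inequality $|g(x)-g(y)|\le(2\|g\|_\infty)^{1-\alpha}[g]^{\alpha}_{\mathrm{Lip}}|x-y|_p^\alpha$) is a reasonable, arguably more elementary alternative route to \Cref{approx_res_hoelder} than the paper's Bramble--Hilbert plus operator-interpolation argument, but it cannot stand in for a proof of \Cref{approx_phi_with_network} itself.
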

Next we approximate a bounded Lipschitz function using linear combinations of the set $\{\phi_m\colon m \in \{1,...,N\}^d\}$. The approximation error will be measured in the Hölder norm from \eqref{eq:Hoeldernorm}. 
\begin{lemma}  \label{approx_function_using_phi}
    Let $0 <\alpha < 1.$ There exists a constant $C_1>0$ such that for any $f \in W^{1, \infty}((0,1)^d)$ there are constants $c_{f,m}$ for $m \in \{0,...,N\}^d$ such that
    \begin{equation*}
        \Big\| f- \sum_{m \in \{0,...,N\}^d}c_{f,m} \phi_m \Big\|_{\mathcal{H}^{\alpha}} \leq C_1 \Big(\frac{1}{N}\Big)^{1-\alpha}\| f\|_{W^{1, \infty }}.
    \end{equation*}
    The coefficients satisfy for a $C_2 >0$
    \begin{equation*}
        |c_{f,m}| \leq C_2 \|\tilde{f}\|_{W^{1, \infty}(\Omega_{m, N})},
    \end{equation*} where $\Omega_{m, N} \coloneqq B_{\frac{1}{N}, |\cdot|_{\infty}}(\frac{m}{N})$ and $\tilde{f}\in W^{1, \infty}(\mathbb{R}) $ is an extension of $f.$
\end{lemma}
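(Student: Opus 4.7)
The plan is to use a partition-of-unity argument with standard interpolation between $L^\infty$ and $W^{1,\infty}$ bounds. First I would extend $f$ to a Lipschitz function $\tilde f$ on $\mathbb R^d$ via McShane extension (preserving $\|f\|_{W^{1,\infty}}$) and set $c_{f,m} := \tilde f(m/N)$. This immediately delivers the coefficient estimate $|c_{f,m}| \le \|\tilde f\|_{W^{1,\infty}(\Omega_{m,N})}$. The key structural property to verify is that $\sum_m \phi_m \equiv 1$ on $(0,1)^d$: by the tensor product definition of $\phi_m$ and a direct one-dimensional computation showing $\sum_{m_\ell=0}^N \psi(3N(t - m_\ell/N)) = 1$ for $t\in[0,1]$ (on each subinterval the piecewise-linear contributions of neighboring plateau/slope regions sum to one), this reduces to a product identity.

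Setting $g := f - \sum_m c_{f,m}\phi_m$, the partition-of-unity identity lets me write $g(x) = \sum_m (f(x) - \tilde f(m/N))\phi_m(x)$. Since the support of $\phi_m$ is contained in $\Omega_{m,N}$ and $\phi_m\geq 0$, only $O(1)$ indices (at most $2^d$) contribute at a given $x$, and for each of them $|f(x) - \tilde f(m/N)| \le C_d \|f\|_{W^{1,\infty}}/N$. Using $\sum_m \phi_m = 1$ this yields $\|g\|_\infty \le C N^{-1} \|f\|_{W^{1,\infty}}$.

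Next I bound the Sobolev seminorm. Differentiating, $\partial_k g(x) = \partial_k f(x) - \sum_m c_{f,m}\partial_k\phi_m(x)$. Here the naive estimate fails because $|\partial_k\phi_m| = O(N)$, but differentiating the identity $\sum_m \phi_m \equiv 1$ gives $\sum_m \partial_k\phi_m \equiv 0$. Fixing any $m'$ with $x\in\Omega_{m',N}$, I subtract $c_{f,m'}\sum_m \partial_k\phi_m(x) = 0$ to obtain
\begin{equation*}
\sum_m c_{f,m}\partial_k\phi_m(x) = \sum_m (c_{f,m} - c_{f,m'})\partial_k\phi_m(x),
\end{equation*}
where on the support of $\phi_m$ we have $|c_{f,m} - c_{f,m'}| \le C \|f\|_{W^{1,\infty}}/N$. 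Multiplied by $|\partial_k\phi_m| \le 3N$ and summed over the $O(1)$ active indices, the $N$-blowup cancels, giving $|g|_{W^{1,\infty}} \le C \|f\|_{W^{1,\infty}}$.

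Finally, I interpolate: for $u,v \in (0,1)^d$, combining $|g(u) - g(v)| \le 2\|g\|_\infty$ with $|g(u)-g(v)| \le |g|_{W^{1,\infty}}|u-v|_p$ yields
\begin{equation*}
\frac{|g(u)-g(v)|}{|u-v|_p^\alpha} \le (2\|g\|_\infty)^{1-\alpha}|g|_{W^{1,\infty}}^\alpha \le C \big(N^{-1}\|f\|_{W^{1,\infty}}\big)^{1-\alpha}\|f\|_{W^{1,\infty}}^\alpha,
\end{equation*}
and since $\|g\|_\infty \le C N^{-1}\|f\|_{W^{1,\infty}} \le C N^{-(1-\alpha)}\|f\|_{W^{1,\infty}}$ for $N\ge 1$, the Hölder norm is bounded by $C N^{-(1-\alpha)}\|f\|_{W^{1,\infty}}$ as required. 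The main obstacle is the Sobolev-seminorm step: without exploiting the partition-of-unity identity to turn $c_{f,m}$ into local differences, the derivative of $g$ would be of order $N$ rather than $O(1)$, which would destroy the interpolation bound.
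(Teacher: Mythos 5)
Your proof is correct, and while the skeleton (extend $f$, use the partition of unity $\sum_m\phi_m\equiv 1$, prove separate $L^\infty$- and $W^{1,\infty}$-bounds for the residual, then interpolate to get the $\mathcal H^\alpha$ rate $N^{-(1-\alpha)}$) matches the paper's, you replace both key technical ingredients with more elementary ones. For the local estimates the paper takes $c_{f,m}$ to be an averaged Taylor polynomial of order $1$ (a mollified local average of a Stein extension) and invokes the Bramble--Hilbert lemma, whereas you take the point value $c_{f,m}=\tilde f(m/N)$ of a McShane extension and use the Lipschitz estimate directly; since the local polynomial is just a constant in the $W^{1,\infty}$ case, Bramble--Hilbert degenerates to exactly your mean-value bound, so nothing is lost, and your cancellation of the $O(N)$ factor in $\partial_k\phi_m$ via $\sum_m\partial_k\phi_m\equiv 0$ is the same mechanism the paper realizes through the product rule applied to $\sum_m\phi_m(\tilde f-c_{f,m})$. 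For the passage to $\mathcal H^\alpha$ the paper uses abstract operator interpolation (Lunardi's theorem, which requires the map $f\mapsto f-\sum_m c_{f,m}\phi_m$ to be linear and the identification of $\mathcal H^\alpha$ as an interpolation space), while you use the pointwise inequality $\min(a,bt)\le a^{1-\alpha}(bt)^\alpha$, which needs no linearity and no interpolation theory. The paper's heavier machinery buys generality (it extends verbatim to higher-order Sobolev regularity, following G\"uhring et al.), while your argument is self-contained and tailored to the Lipschitz case at hand; the only points worth making explicit are the $\ell_p$-versus-$\ell_1$ dimensional constant in $|g(u)-g(v)|\le C_d\,|g|_{W^{1,\infty}}|u-v|_p$ and the (harmless) count of up to $3^d$ rather than $2^d$ overlapping supports.
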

\begin{proof}
Let $E\colon W^{1, \infty} ((0,1)^d) \rightarrow W^{1, \infty}(\mathbb{R})$ be the continuous linear extension operator from \cite[Theorem 5]{stein1970} and set $\tilde{f} \coloneqq Ef.$ As $E$ is continuous there exists a $C_E>0$ such that
\begin{equation*}
    \|\tilde{f} \|_{W^{1, \infty}(\mathbb{R}^d)} \leq C_E \|f \|_{W^{1, \infty}}.
\end{equation*}
    \textbf{Step 1 }(Choice of $c_{f,m}$): For each $m \in\{0, \ldots, N\}^d$ we define
	$$
	c_{f, m} = \int_{B_{m, N}} \tilde{f}(y) \rho(y) \ \mathrm{d}y \text { for } B_{m, N}:=B_{\frac{3}{4 N}, \mid \cdot \mid }\Big(\frac{m}{N}\Big)
	$$
	and an arbitrary cut-off function $\rho$ supported in $B_{m, N}$, i.e.
	\begin{center}$\rho \in C_c^{\infty}(\mathbb{R}^d) \quad$ with $\quad \rho(x) \geq 0$ for all $x \in \mathbb{R}^d, \quad \operatorname{supp} \rho=B_{m, N} \quad$ and $\quad \int_{\mathbb{R}^n} \rho(x) d x=1$. \end{center} 
	
	Then \[
	|c_{m,f}| = \Big|\int_{B_{m, N}}\tilde{f}(y) \rho(y) \ \mathrm{d}y \Big|\leq \|\tilde{f}\|_{\infty,\Omega_{m, N}} \int_{B_{m, N}}  \rho(y) \ \mathrm{d}y = \|\tilde{f}\|_{{\infty},\Omega_{m, N}} \leq C_E\|f\|_{W^{1, \infty}(\Omega_{m, N})} .
	\]
 \textbf{Step 2} (Local estimates in $\|\cdot\|_{W^{k, p}}$): The coefficients $c_{m,f}$ are the averaged Taylor polynomials in the sense of \citet[Definition 4.1.3]{Brenner_Scott2008} of order $1$ averaged over $B_{m,N}.$ As \citet[Proof of Lemma C.4, Step 2]{Guehring2020} showed, the conditions of the Bramble-Hilbert-Lemma \cite[Theorem 4.3.8]{Brenner_Scott2008} are satisfied. Hence for $k \in \{0,1\}$
 \begin{equation*}
     |\tilde{f}-c_{m,f}|_{W^{k, \infty}(\Omega_{m,N})} \leq C_1 \Big(\frac{2 \sqrt{d}}{N}\Big)^{1-k} |\tilde{f}|_{W^{1, \infty}(\Omega_{m;n})} \leq C_2 \Big(\frac{1}{N}\Big)^{1-k} \|\tilde{f}\|_{W^{1, \infty}(\Omega_{m;n})}.
 \end{equation*}
 Now using $\phi_m$ as defined above, we get \begin{align}
		\left\|\phi_m\Big(\tilde{f}-c_{f, m}\Big)\right\|_{\infty,\Omega_{m, N}} &\leq\left\|\phi_m\right\|_{\infty,\Omega_{m, N}} \cdot\left\|\tilde{f}-c_{f, m}\right\|_{\infty,\Omega_{m, N}}
		 \leq C_2\frac{1}{N}\|\tilde{f}\|_{W^{1, \infty}(\Omega_{m, N})} 	\label{Local_L_infty}
	\end{align}
	Due to the product inequality for weak derivatives \cite[Lemma B.6]{Guehring2020} there is a constant $C^{\prime}>0$ such that the supremum norm of the weak derivative  is bounded by
	\begin{align}
		\left|\phi_m\Big(\tilde{f}-c_{f,m}\Big)\right|_{W^{1, \infty}(\Omega_{m, N})} \leq & C^{\prime}|\phi_m|_{W^{1, \infty}(\Omega_{m, N})} \cdot\left\|\tilde{f}-c_{f, m}\right\|_{\infty,\Omega_{m, N}}\notag \\
		& +C^{\prime}\left\|\phi_m\right\|_{\infty,\Omega_{m, N}} \cdot|\tilde{f}-c_{f, m}|_{W^{1, \infty}(\Omega_{m, N})}\notag \\
		\leq & C^{\prime} \cdot c N \cdot C_2\frac{1}{N}\|\tilde{f}\|_{W^{1, \infty}(\Omega_{m, N})}+C^{\prime} \cdot C_3\|\tilde{f}\|_{W^{1, \infty}(\Omega_{m, N})} \notag \\
		= & C_4\|\tilde{f}\|_{W^{1, \infty}(\Omega_{m, N})}. \label{Local_weak}
	\end{align}
	Combining \eqref{Local_L_infty} and \eqref{Local_weak}  we get
	\[
	\left\|\phi_m\Big(\tilde{f}-c_{f, m}\Big)\right\|_{W^{1, \infty}(\Omega_{m, N})} \leq C_5 \|\tilde{f}\|_{W^{1, \infty}(\Omega_{m, N})}.
	\]
 \textbf{Step 3} (Global estimate in $\|\cdot\|_{W^{k, p}}$): As $\sum_{m \in \{0,...,N\}^d} \phi_m =1,$ we have that
 $$
	\tilde{f}(x)=\sum_{m \in\{0, \ldots, N\}^d} \phi_m(x) \tilde{f}(x), \quad \text { for a.e. } x \in(0,1)^d .
	$$
 As $\tilde{f}\big|_{(0,1)^d} = f$ we have for $k \in \{0,1\}$
  \begin{align}
		\Big\|f- \sum_{m \in\{0, \ldots, N\}^d} \phi_m c_{f, m}\Big\|_{W^{k, \infty}((0,1)^d)} & = 
  \Big\|\tilde{f}- \sum_{m \in\{0, \ldots, N\}^d} \phi_m c_{f, m}\Big\|_{W^{k, \infty}((0,1)^d)}\notag
  \\& =\Big\|\sum_{m \in\{0, \ldots, N\}^d} \phi_m\big(\tilde{f}-c_{f, m}\big)\Big\|_{W^{k, \infty}((0,1)^d)}\notag \\
		& \leq \sup_{\widetilde{m} \in\{0, \ldots, N\}^d} \ \Big\| \sum_{m \in\{0, \ldots, N\}^d} \phi_m\big(\tilde{f}-c_{f, m}\big) \Big\|_{W^{k, \infty}(\Omega_{\widetilde{m}, N})} \label{glob_begin}
	\end{align}
 where the last step follows from $(0,1)^d \subset \bigcup_{\widetilde{m} \in\{0, \ldots, N\}^d} \Omega_{\widetilde{m}, N}$.
Now we obtain for each $\widetilde{m} \in\{0, \ldots, N\}^d$ using \eqref{support_phi}, \eqref{Local_L_infty} and \eqref{Local_weak}
	\begin{align*}
		\Big\|\sum_{m \in\{0, \ldots, N\}^d} \phi_m\big(\tilde{f}-c_{f, m}\big)\Big\|_{W^{k, \infty}(\Omega_{\tilde{m}, N})} &  \leq \sup_{\substack{m \in\{0, \ldots, N\}^d \\|m-\widetilde{m}|_{\infty} \leq 1}}\big\|\phi_m\big(\tilde{f}-c_{f, m}\big)\big\|_{W^{k, \infty}(\Omega_{\tilde{m}, N})} \\
		&	\leq \sup_{\substack{m \in\{0, \ldots, N\}^d \\|m-\widetilde{m}|_{\infty} \leq 1}}\big\|\phi_m\big(\tilde{f}-c_{f, m}\big)\big\|_{W^{k, \infty}(\Omega_{m, N})} \\
		& \leq C_6\Big(\frac{1}{N}\Big)^{1-k} \sup_{\substack{m \in\{0, \ldots, N\}^d,\\|m-\widetilde{m}|_{\infty} \leq 1}}\|\tilde{f}\|_{W^{1, \infty}(\Omega_{m, N})}.
	\end{align*}
 Plugging this into \eqref{glob_begin}, we obtain for $k \in \{0,1\}$ \begin{align}
		\Big\|f-\sum_{m \in\{0, \ldots, N\}^d} \phi_m c_{f, m}\Big\|_{W^{k, \infty}((0,1)^d)}& \leq C_6   \Big(\frac{1}{N}\Big)^{(1-k)} \sup_{\tilde{m} \in\{0, \ldots, N\}^d}  \Big(\sup_{\substack{m \in\{0, \ldots, N\}^d,\\|m-\widetilde{m}|_{\infty} \leq 1}}\|\tilde{f}\|_{W^{1, \infty}(\Omega_{m, N})}\Big) \notag\\
		& \leq C_7  \Big(\frac{1}{N}\Big)^{(1-k)}  \sup_{\widetilde{m} \in\{0, \ldots, N\}^d}\|\tilde{f}\|_{W^{1, \infty}(\Omega_{\widetilde{m}, N})}\notag\\
		& \leq C_8 	 \Big(\frac{1}{N}\Big)^{(1-k)} \| \tilde{f}\|_{W^{1, \infty}(\mathbb{R}^d) }\notag \\
		& \leq C_{9}  \Big(\frac{1}{N}\Big)^{(1-k)} \| f\|_{W^{1, \infty}((0,1)^d) } .\label{global_k_bound}
  \end{align}
  \textbf{Step 4} (Interpolation): 
  Define the linear operators $T_0\colon W^{1, \infty}((0,1)^d) \rightarrow L^{\infty}((0,1)^d), T_{\alpha}\colon W^{1, \infty}((0,1)^d) \rightarrow\mathcal{H}^{\alpha}((0,1)^d)$ and $ T_1\colon W^{1, \infty} ((0,1)^d) \rightarrow W^{1, \infty} ((0,1)^d) $ via
  \begin{align*}
 T_k(f) = f-\sum_{m \in\{0, \ldots, N\}^d} \phi_m c_{f, m}, \quad k \in \{0,\alpha ,1\}
  \end{align*}
  Note that the linearity follows from the definition of the constants $c_{f,m}.$ Using \citet[Theorem 1.6]{lunardi}, for the nontrivial interpolation couple see \cite[p.11 f.]{lunardi}, leads to 
  \begin{equation*}
      \| T_{\alpha} \| \leq \|T_0\|^{1-\alpha} \|T_1\|^{\alpha}. 
  \end{equation*}
  Note that $\|\cdot\|_{\mathcal{H}^{\alpha}}$ is equivalent to $\|\cdot\|_{W^{s, \infty}(\Omega)}$ in \citet{Guehring2020}.
 Using \eqref{global_k_bound} we conclude
 \begin{align*}
     \Big\|f-\sum_{m \in\{0, \ldots, N\}^d} \phi_m c_{f, m}\Big\|_{\mathcal{H}^{\alpha}} &\leq C_{10} \Big(\frac{1}{N}\Big)^{1-\alpha} \| f\|_{W^{1, \infty} }.\qedhere
 \end{align*}
\end{proof}

Now we want to approximate the function $\sum_{m \in \{0,...,N\}^d}c_{f,m} \phi_m $ in Hölder norm using a ReLU network. 
\begin{lemma} \label{approx_sum_phi_with_network}
    For any $\varepsilon \in (0, 1/2)$ there is a neural network $\Phi_{\varepsilon}$ with ReLU activation function such that for  $(c_{f, m})_m$ from \Cref{approx_function_using_phi}, there is a constant $C>0$ such that
    \begin{equation*}
	\Big\|\sum_{m \in\{0, \ldots, N\}^d} \phi_m c_{f,m}-\Phi_{\varepsilon}\Big\|_{\mathcal{H}^{\alpha}} \leq C\|f\|_{W^{1, \infty}} N^\alpha \varepsilon,
    \end{equation*}
    the number of layers is at most $\lceil C_1\log_2(\varepsilon^{-1})\rceil ,$ the number of nonzero weights is at most $\lceil C_2 (N+1)^d\log_2^2(\varepsilon^{-1})\rceil$ and the number of neurons is at most $\lceil C_3 (N+1)^d(\log_2^2(\varepsilon^{-1}) \vee \log_2(\varepsilon^{-1}))\rceil,$ with $C_1, C_2$ and $C_3$ from \Cref{approx_phi_with_network}.
\end{lemma}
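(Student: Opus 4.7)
The plan is to build $\Phi_\varepsilon$ as a linear combination of the bump-approximating subnetworks from \Cref{approx_phi_with_network}. Writing $\Psi_{\varepsilon,m}$ for the ReLU network associated with $m\in\{0,\dots,N\}^d$, I would set
\[
 \Phi_\varepsilon := \sum_{m\in\{0,\dots,N\}^d} c_{f,m}\,\Psi_{\varepsilon,m},
\]
which can be implemented by appending a single affine output layer, carrying the $(N+1)^d$ weights $c_{f,m}$, on top of the joint architecture of \Cref{approx_phi_with_network}. This additional layer does not change the depth, and increases the weight and neuron counts only by $(N+1)^d$, hence the stated asymptotic size bounds carry over verbatim from \Cref{approx_phi_with_network}.

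For the error estimate, I would first establish two endpoint bounds and then interpolate. The crucial geometric observation is that, by \eqref{support_phi} together with the support inclusion $\operatorname{supp}\Psi_{\varepsilon,m}\subset B_{1/N,|\cdot|_\infty}(m/N)$ from \Cref{approx_phi_with_network}, at any point $x\in(0,1)^d$ at most $3^d$ of the differences $\phi_m-\Psi_{\varepsilon,m}$ (and, almost everywhere, also at most $3^d$ of their weak derivatives) are nonzero. Combining this bounded-overlap property with $|c_{f,m}|\leq C\,\|f\|_{W^{1,\infty}}$ from \Cref{approx_function_using_phi} and the $k\in\{0,1\}$ bounds of \Cref{approx_phi_with_network} yields
\[
 \Big\|\sum_m c_{f,m}(\phi_m-\Psi_{\varepsilon,m})\Big\|_{\infty} \leq C\,\|f\|_{W^{1,\infty}}\,\varepsilon
 \quad\text{and}\quad
 \Big\|\sum_m c_{f,m}(\phi_m-\Psi_{\varepsilon,m})\Big\|_{W^{1,\infty}} \leq C\,\|f\|_{W^{1,\infty}}\,N\varepsilon.
\]

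Finally, I would conclude by the same interpolation argument as in Step 4 of the proof of \Cref{approx_function_using_phi}. The operator $T\colon W^{1,\infty}\to \mathcal{H}^\alpha$, $T(f):=\sum_m c_{f,m}\phi_m-\Phi_\varepsilon$, is linear, since $f\mapsto c_{f,m}$ is a linear functional and the subnetworks $\Psi_{\varepsilon,m}$ do not depend on $f$. Applying \citet[Theorem 1.6]{lunardi} to the interpolation couple $(L^\infty,W^{1,\infty})$ with parameter $\alpha$, together with the two endpoint bounds above, gives $\|T(f)\|_{\mathcal{H}^\alpha}\le C\,\|f\|_{W^{1,\infty}}\,\varepsilon^{1-\alpha}(N\varepsilon)^\alpha=C'\,\|f\|_{W^{1,\infty}}\,N^\alpha\varepsilon$, which is exactly the claimed bound.

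The main delicate point is handling the weak derivative of the sum in the $W^{1,\infty}$ endpoint: without the bounded-overlap property of the supports one would pick up a catastrophic $(N+1)^d$ factor instead of $3^d$, and the interpolation would no longer give the desired rate in $N$. Once this overlap bound is in place, the rest of the argument parallels the Bramble--Hilbert plus interpolation strategy already used in \Cref{approx_function_using_phi}.
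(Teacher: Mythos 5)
Your proposal is correct and follows essentially the same route as the paper's proof: parallelize the networks $\Psi_{\varepsilon,m}$ with an affine combination layer carrying the coefficients $c_{f,m}$, use the bounded overlap of the supports to get the $W^{k,\infty}$ endpoint bounds $CN^k\varepsilon\|f\|_{W^{1,\infty}}$ for $k\in\{0,1\}$ without an $(N+1)^d$ blow-up, and interpolate via \citet[Theorem 1.6]{lunardi} exactly as in Step 4 of \Cref{approx_function_using_phi}. The only cosmetic difference is that the paper books the combination step as one extra layer and $(N+1)^d$ extra weights rather than merging it into the output layer, which does not affect the stated asymptotic size bounds.
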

\begin{proof}
    From \Cref{approx_phi_with_network} we know that there are neural networks $\Psi_{\varepsilon,m}$ with at most $\lceil C_1\log_2(\varepsilon^{-1})\rceil $ layers,  $\lceil C_2 (N+1)^d\log_2^2(\varepsilon^{-1})\rceil$ nonzero weights and $\lceil C_3 (N+1)^d(\log_2^2(\varepsilon^{-1}) \vee \log_2(\varepsilon^{-1}))\rceil$ neurons  that approximate $\phi_m$ such that for $k \in \{0,1\}$ \[
	\left\|\phi_m-\Psi_{\varepsilon,m}\right\|_{W^{k, \infty}} \leq c^{\prime} N^k \varepsilon.
	\] 
    Now we parallelize these networks and multiply with the coefficients $c_{f,m}$ afterwards. Hereby, we construct a network $\Phi_{\varepsilon}$ with $1+\lceil C_1\log_2(\varepsilon^{-1})\rceil $ layers,  $N^d +\lceil C_2 (N+1)^d\log_2^2(\varepsilon^{-1})\rceil$ nonzero weights and $1+ \lceil C_3 (N+1)^d(\log_2^2(\varepsilon^{-1}) \vee \log_2(\varepsilon^{-1}))\rceil$ neurons  such that \begin{equation} \label{approx_phi_NN}
	    \Phi_{\varepsilon} = \sum_{m \in \{0,...,N\}^d} c_{f, m }\Psi_{\varepsilon,m}. 
	\end{equation}
    For each $m \in \{0,...,N\}^d$ denote $\Omega_{m, N}=B_{\frac{1}{N},|\cdot|_{\infty}}(\frac{m}{N})$ as above. For $k \in \{0,1\}$ we get
    $$
	\begin{aligned}
		 &\Big\|\Phi_{\varepsilon} - \sum_{m \in\{0, \ldots, N\}^d} c_{f,m} \phi_m \Big\|_{W^{k, \infty}((0,1)^d)}  =\Big\|  \sum_{m \in\{0, \ldots, N\}^d} c_{f,m} (\Psi_{\varepsilon,m} - \phi_m )\Big\|_{W^{k, \infty}((0,1)^d)} \\
		&\qquad\qquad  \leq \sup_{\widetilde{m} \in\{0, \ldots, N\}^d}  \Big\|  \sum_{m \in\{0, \ldots, N\}^d} c_{f,m} (\Psi_{\varepsilon,m} - \phi_m )\Big\|_{W^{k, \infty}(\Omega_{\widetilde{m}, N} \cap(0,1)^d )}\\
  &\qquad\qquad  \leq 3^d   \sup_{\widetilde{m} \in\{0, \ldots, N\}^d}  \sup_{m \in\{0, \ldots, N\}^d}\Big\| c_{f,m} (\Psi_{\varepsilon,m} - \phi_m )\Big\|_{W^{k, \infty}(\Omega_{\widetilde{m}, N} \cap(0,1)^d )}  \\
	&\qquad\qquad  \leq  3^d   \sup_{\widetilde{m} \in\{0, \ldots, N\}^d}  \sup_{m \in\{0, \ldots, N\}^d}|c_{f,m}| \Big\|  (\Psi_{\varepsilon,m} - \phi_m )\Big\|_{W^{k, \infty}(\Omega_{\widetilde{m}, N} \cap(0,1)^d )}  \\ 
 &\qquad\qquad  \leq 3^d   \sup_{\widetilde{m} \in\{0, \ldots, N\}^d}  \sup_{m \in\{0, \ldots, N\}^d} \|\tilde{f}\|_{W^{1, \infty}(\Omega_{m, N})}\| \Psi_{\varepsilon,m} - \phi_m\|_{W^{k, \infty}(\Omega_{\widetilde{m}, N} \cap(0,1)^d )}\\
		& \qquad\qquad \leq C  N^k \varepsilon \| f\|_{W^{1, \infty}}.
	\end{aligned} $$
	  The second to last inequality follows from the fact that on $\Omega_{\widetilde{m}, N}$ is within the support of $\phi_m$ only for $|m-\widetilde{m}|_{\infty} \leq 1.$ The last inequality follows from \eqref{approx_phi_NN} and the continuity of the extension operator, see \cite[Theorem 5]{stein1970}.
   As in Step 4 of \Cref{approx_function_using_phi}, we conclude using \citet[Theorem 1.6]{lunardi}
   \begin{equation*}
	\Big\|\Phi_{\varepsilon}-\sum_{m \in\{0, \ldots, N\}^d} \phi_m c_{f,m}\Big\|_{\mathcal{H}^{\alpha}} \leq C N^\alpha \varepsilon\|f\|_{W^{1, \infty}}.\qedhere
   \end{equation*}
\end{proof}

Now we are ready to proof \Cref{approx_res_hoelder}.
\begin{proof}[Proof of \Cref{approx_res_hoelder}]
    Combining \Cref{approx_function_using_phi} and \Cref{approx_sum_phi_with_network} with $\|f\|_{W^{1, \infty}} \leq B $ yields for a constant $C>0$ for any $\tilde\varepsilon\in(0,1/2)$ that
    \begin{align}
        \|f- \Phi_{\tilde\varepsilon}\|_{\mathcal{H}^{\alpha}} &\leq  \Big\|f- \sum_{m \in\{0, \ldots, N\}^d} c_{f,m} \phi_m\Big\|_{\mathcal{H}^{\alpha}}  +  \Big\|\sum_{m \in\{0, \ldots, N\}^d} c_{f,m} \phi_m- \Phi_{\tilde\varepsilon}\Big\|_{\mathcal{H}^{\alpha}} \notag \\
        & \leq CB\Big( \Big(  \frac{1}{N}\Big)^{1-\alpha} + N^{\alpha} \tilde\varepsilon\Big), \label{dreieckugl_netzwerkapprox}
    \end{align} where $\tilde\varepsilon$ determines the approximation accuracy  in \Cref{approx_sum_phi_with_network}.
    For
    $$
	N:=\left\lceil\Big(\frac{\varepsilon}{2 C B}\Big)^{-1 /(1-\alpha)}\right\rceil,
	$$
 we get for the first term in \eqref{dreieckugl_netzwerkapprox} \begin{equation*}
     \Big(\frac{1}{N}\Big)^{1-\alpha} \leq \frac{\varepsilon}{2CB}.
 \end{equation*}
 Choosing \begin{equation}\label{eps_NN}
     \tilde\varepsilon = \frac{\varepsilon}{2CB} \Big(\Big( \frac{\varepsilon}{2CB}\Big)^{-\frac{1}{1-\alpha}} +1\Big)^{-\alpha}
 \end{equation}
 leads to 
 \begin{align*}
        \|f- \Phi_{\tilde\varepsilon}\|_{\mathcal{H}^{\alpha}} &\leq \varepsilon.
        \end{align*}
        From \Cref{approx_phi_with_network} we know that there is a ReLU network with no more than $1+\lceil C_1\log_2(\tilde\varepsilon^{-1})\rceil $ layers,  $N^d +\lceil C_2 (N+1)^d\log_2^2(\tilde\varepsilon^{-1})\rceil$ nonzero weights and $1+ \lceil C_3 (N+1)^d(\log_2^2(\tilde\varepsilon^{-1}) \vee \log_2(\tilde\varepsilon^{-1}))\rceil$ neurons with the required properties. Inserting \eqref{eps_NN} and assuming $CB>\frac{1}{2}$ yields
  \begin{align*}
      \log_2(\tilde\varepsilon^{-1} ) &\le \log_2\Big(  \frac{2CB}{\varepsilon} 2^{\alpha}  \Big( \frac{\varepsilon}{2CB}\Big)^{-\frac{\alpha}{1-\alpha}}\Big)
        \leq C^{\prime} \log_2( \varepsilon^{-\frac{1}{1-\alpha}}).
  \end{align*}
Thus there are $C^{\prime},C^{\prime \prime}$ and $C^{\prime\prime\prime}$ such that the ReLU network has no more than $1 + \lceil C^{\prime} \log_2(\varepsilon^{-\frac{1}{1-\alpha}})\rceil $ layers, $\lceil C^{\prime \prime} \varepsilon^{-\frac{d}{1-\alpha}}\log^2_2(\varepsilon^{-\frac{1}{1-\alpha}})\rceil $ nonzero weights and $1+ \lceil C^{\prime \prime \prime}   \varepsilon^{-\frac{d}{1-\alpha}}(\log_2^2(\varepsilon^{-\frac{1}{1-\alpha}}) \vee \log_2(\varepsilon^{-\frac{1}{1-\alpha}}))\rceil$  neurons. \\

Since $f\in\Lip(L,B)\subseteq \mathcal H^{\alpha}(\Gamma)$ for $\Gamma=\max(L,2B)$, we conclude
\begin{equation*}
    \|\Phi_{\tilde\varepsilon}\|_{\mathcal{H}^{\alpha}}\le\|f\|_{\mathcal{H}^{\alpha}}+\|\Phi_{\tilde\varepsilon}-f\|_{\mathcal{H}^{\alpha}}\le \Gamma+\varepsilon.
\end{equation*}
\Cref{alles_eingesetzt} is a straightforward combination of \Cref{hoelder_oracle} and \Cref{approx_res_hoelder}.
\end{proof}

\subsection{Proof of \Cref{thm:wasserstein_gan}}
\begin{proof}[Proof of \Cref{thm:wasserstein_gan}]    
    
    First we note that for $\Gamma > 1,$ there is an $L>0,$ such that there is  a $B>0$ with $2B<\Gamma -1$ and with $\hat{X} \sim \mathbb{P}^*$
    \begin{align*}
        \sup_{W \in \operatorname{Lip}(L)}\mathbb{E}[W(\hat{X}) -W(\hat{G}_n(Z))]= \sup_{W \in \operatorname{Lip}(L, 2B)}\mathbb{E}[W(\hat{X}) -W(\hat{G}_n(Z))].
    \end{align*}
This $L>0$ exists as $[0,1]^d$ is bounded and adding a constant to any function $W \in \operatorname{Lip}(L)$ will not change the value of $\mathbb{E}[W(\hat{X}) -W(\hat{G}_n(Z))].$
    
 Then we get for every $G \in \mathcal{G}$ with the same reasoning as in the proof of \Cref{dreiecksugls_diskriminierer}
    \begin{align*}
\mathsf{W}_1(\mathbb{P}^*, \mathbb{P}^{\hat{G}_n(Z)}) & \leq \mathsf{W}_1(\mathbb{P}^*, \mathbb{P}_n) + \mathsf{W}_1(\mathbb{P}_n, \mathbb{P}^{\hat{G}_n(Z)}) \\
& =  \mathsf{W}_1(\mathbb{P}^*, \mathbb{P}_n) + \frac{1}{L} \mathsf{W}_L(\mathbb{P}_n, \mathbb{P}^{\hat{G}_n(Z)}) \\
& \leq   \mathsf{W}_1(\mathbb{P}^*, \mathbb{P}_n) + \frac{1}{L} \mathsf{W}_{\mathcal{W}}(\mathbb{P}_n, \mathbb{P}^{\hat{G}_n(Z)}) + \frac{2}{L} \inf_{W \in \mathcal{W}} \sup_{W^{\prime}\in \operatorname{Lip}(L,2B)}\|W-W^{\prime}\|_{\infty}  \\
& \leq \mathsf{W}_1(\mathbb{P}^*, \mathbb{P}_n) + \frac{1}{L} \mathsf{W}_{\mathcal{W}}(\mathbb{P}_n, \mathbb{P}^{G(Z)}) + \frac{2}{L} \inf_{W \in \mathcal{W}} \sup_{W^{\prime}\in \operatorname{Lip}(L,2B)}\|W-W^{\prime}\|_{\infty}  \\
& \leq \mathsf{W}_1(\mathbb{P}^*, \mathbb{P}_n) + \frac{1}{L} \mathsf{W}_{\mathcal{H}^{\alpha}}(\mathbb{P}_n, \mathbb{P}^{G(Z)}) + \frac{2}{L} \inf_{W \in \mathcal{W}} \sup_{W^{\prime}\in \operatorname{Lip}(L,2B)}\|W-W^{\prime}\|_{\infty}  \\
    \end{align*}
 
    The bound on $\mathsf{W}_{\mathcal{H}^{\alpha}}(\mathbb{P}_n, \mathbb{P}^{G(Z)}) $ depending on the intrinsic dimension $d^*$ was already derived in \Cref{hoelder_oracle} (starting with Equation \eqref{W_Halpha(P_n, P^G)}). The bound on $\mathsf{W}_1(\mathbb{P}^*, \mathbb{P}_n)$ depending on the intrinsic dimension $d^*$ was already derived in \Cref{rate_schreuder}.

\end{proof}

\subsection{Calculations for \Cref{bsp_lin}}
\label{proof_bsp_lin}
For the Wasserstein distance we get $\mathsf{W}_1(\mathbb{P}, \mathbb{Q}) = \gamma.$
	The Vanilla GAN distance using all Lipschitz $L$ affine functions as discriminator yields in this example $\V_{a\cdot +b}(\P, \mathbb Q)=\max_{\substack{a,b\in\R,\\ |a| \leq L}}f(a,b)$ for
\begin{align*}
	f(a,b) \coloneqq \frac{1}{2} \big(-\log\big(1+e^{-a\gamma -b}\big)-\log\big(1+e^{-a(\gamma + \varepsilon) -b}\big) -\log\big(1+e^{b}\big)-\log\big(1+e^{a\varepsilon +b}\big)\big) + \log(4).
\end{align*}
	Standard calculus yields for fixed $a$ the unique maximizer $b^* =  - \frac{a(\varepsilon + \gamma)}{2}$ and
\begin{align*}
	f(a,b^*) = -\log\Big(1+e^{-\frac{a(\varepsilon+ \gamma)}{2}}\Big)-\log\Big(1+e^{\frac{a(\varepsilon- \gamma)}{2}}\Big) + \log(4).
\end{align*}
Since
\begin{align*}
	\frac{\partial }{\partial a}f(a,b^*) =  \frac{\varepsilon +\gamma }{2(e^{\frac{a(\gamma + \varepsilon)}{2}}+1)} - \frac{\varepsilon -\gamma}{2(e^{-\frac{a(\varepsilon - \gamma)}{2}}+1)},
\end{align*}
for $\varepsilon \leq \gamma,$ the maximizing $a$ is maximal $a^* =L.$ This coincides with the intuitive choice: as the support of $\P^X$ and the support of  $\P^Y$ can be separated by a single point on $\mathbb{R},$ we expect the optimal discriminator to be affine linear. Standard calculus yields the linear upper and lower bound for $\varepsilon = \frac{1}{4}$.\\
For $\varepsilon > \gamma,$ the unrestricted maximizing $a^*$ solves the equation \begin{align*}
	(\varepsilon - \gamma )e^{\frac{a^*(\varepsilon + \gamma)}{2}}- 	(\varepsilon + \gamma )e^{-\frac{a^*(\varepsilon - \gamma)}{2}} = 2\gamma.
\end{align*}
While there is no closed form solution, a numerical approximation (for $\varepsilon = \frac{1}{4}$) yields for $\gamma < \varepsilon$ and $L>16$ such that $a^*$ is feasible
\begin{align*}
	\frac{\mathsf{W}_1(\P^X, \P^Y)^2}{2} \leq \V_{a\cdot +b} (\P^X, \P^Y) \leq a \cdot\mathsf{W}_1(\P^X, \P^Y)^2.
\end{align*}

\bibliography{literatur}

\end{document}